\let\OLDthebibliography\thebibliography
\renewcommand\thebibliography[1]{
  \OLDthebibliography{#1}
  \setlength{\parskip}{0pt}
  \setlength{\itemsep}{0pt plus 0.3ex}
}
\newtheorem{thm}{Theorem}[section]
\newtheorem{lemma}[thm]{Lemma}
\newtheorem{prop}[thm]{Proposition}
\newtheorem{cor}[thm]{Corollary}
\theoremstyle{definition}
\newtheorem{defn}[thm]{Definition}
\theoremstyle{remark}
\newtheorem{remark}[thm]{Remark}
\numberwithin{equation}{section}
\newcommand{\vv}[1]{\mathbf{#1}}
\newcommand*\wrapletters[1]{\wr@pletters#1\@nil}
\def\wr@pletters#1#2\@nil{#1\allowbreak\if&#2&\else\wr@pletters#2\@nil\fi}
 \def \epsilon {{\varepsilon}}
\def \leq{\leqslant} \def \geq {\geqslant}
\def\le{\leqslant} \def\ge{\geqslant}
\def \bP {\mathbb P}
\newcommand{\C}{{\mathbb C}}         
\newcommand{\N}{{\mathbb N}}         
\newcommand{\R}{{\mathbb R}}        
\newcommand{\Z}{{\mathbb Z}}         
\newcommand{\id}{\mathrm{id}}
\def \bh {\mathbf h}
\def \bv {\mathbf v}
\def \bw {\mathbf w}
\def \fa {\mathfrak a}
\def \fd {\mathfrak d}
\def \fg {\mathfrak g}
\def \fh {\mathfrak h}
\def \fl {\mathfrak l}
\def \fm {\mathfrak m}
\def \fq {\mathfrak q}
\def \fr {\mathfrak r}
\def \fs {\mathfrak s}
\def \fu {\mathfrak u}
\def \fB {\mathfrak B}
\def \fC {\mathfrak C}
\def \fD {\mathfrak D}
\def \fJ {\mathfrak J}
\def \fL {\mathfrak L}
\def \fN {\mathfrak N}
\def \fQ {\mathfrak Q}
\def \fS {\mathfrak S}
\def \fZ {\mathfrak Z}
\def \cA {\mathcal A}
\def \cB {\mathcal B}
\def \cC {\mathcal C}
\def \cE {\mathcal E}
\def \cF {\mathcal F}
\def \cI {\mathcal I}
\def \cL {\mathcal L}
\def \cM {\mathcal M}
\def \cN {\mathcal N}
\def \cR {\mathcal R}
\def \cS {\mathcal S}
\def \cT {\mathcal T}
\def \cW {\mathcal W}
\def \cY {\mathcal Y}
\def \cZ {\mathcal Z}
\def \dim {\mathrm{dim}}
\def \supp {{\mathrm{supp}}}
\def \R {{\mathbb{R}}}
\def \Z {{\mathbb{Z}}}
\def \SL {{\mathrm{SL}}}
\def \dd {{\mathrm{d}}}
\def \Ad {{\mathrm{Ad}}}
\def \Stab {{\mathrm{Stab}}}
\def \HS {{\mathcal{HS}}}
\def \SS {{\mathcal{SS}}}
\def \Leb {{\mathrm{Leb}}}
\begin{document}
\title[Effective Ratner]{Effective version of Ratner's equidistribution theorem for $\SL(3,\R)$}
\author[L. Yang]{Lei Yang}
\address{Institute of Advanced Study, 1 Einstein Dr., Princeton, NJ, 08540, USA}
\email{lyang@ias.edu}
\address{Current address: Department of Mathematics, National University of Singapore, 119076, Singapore}
\email{lei.yang@nus.edu.sg}
\subjclass[2020]{37A17, 22F30}
\keywords{homogeneous dynamics, unipotent orbits, equidistribution, effective Ratner's theorem}
\thanks{}
\date{}
\begin{abstract}
In this paper, we will prove an effective version of Ratner's equidistribution theorem 
for unipotent orbits in $\SL(3,\R)/\SL(3,\Z)$ with a natural Diophantine condition.
\end{abstract}
\maketitle

\section{Introduction}
\label{sec-intro}

\subsection{Ratner's theorem and its effective versions}
\par In 1991, Ratner \cite{ratner_2} proved the following fundamental theorem on 
equidistribution of unipotent orbits in homogeneous spaces:
\begin{thm}
  \label{thm:ratner}
  Let $G$ be a Lie group and $\Gamma$ be a lattice in $G$, namely, the homogeneous 
  space $X = G/\Gamma$ admits a $G$-invariant probability measure. Let $U = \{u(r): r\in \R\}$ be a one-parameter unipotent subgroup of $G$. 
  For any $x \in X$, the closure of the $U$-orbit  $U x$ of $x$ is a 
  closed $L$-orbit $Lx$ for some Lie subgroup $L \subset G$. Moreover, $Ux$ is equidistributed in $Lx$ with respect 
  to the unique $L$-invariant probability measure $\mu_L$, namely, for any $f \in C^\infty_c(X)$ (where $C^\infty_c(X)$ denotes the 
  space of smooth functions on $X$ with compact support),
  \[ \lim_{T \to \infty} \frac{1}{T} \int_{-T/2}^{T/2} f(u(r)x) \dd r  = \int_{Lx} f \dd \mu_L .\]
\end{thm}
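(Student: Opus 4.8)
The plan is to obtain the equidistribution statement as a consequence of two deeper structural theorems about unipotent flows, together with a quantitative nondivergence estimate near the cusp. The first ingredient is \emph{Ratner's measure classification theorem}: every ergodic $U$-invariant probability measure on $X$ is \emph{homogeneous}, i.e.\ it equals the $L'$-invariant measure carried by a single closed orbit $L'y$ for some closed subgroup $U\subseteq L'\subseteq G$. The second is the \emph{topological rigidity} assertion already contained in the statement, namely that $\overline{Ux}$ is a closed orbit $Lx$ of some closed subgroup $L$ carrying a finite $L$-invariant volume. The third is the \emph{Dani--Margulis nondivergence theorem}, ruling out escape of mass to infinity along the orbit averages. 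Granting these, the equidistribution follows softly: set $\mu_T := \frac{1}{T}\int_{-T/2}^{T/2}\delta_{u(r)x}\,\dd r$ and let $\nu$ be any weak-$*$ subsequential limit. By nondivergence $\nu$ is a probability measure; since $u(s)_*\mu_T-\mu_T\to 0$ as $T\to\infty$, $\nu$ is $U$-invariant; and $\nu$ is supported on the closed set $Lx$. By the ergodic decomposition and the measure classification, $\nu$-almost every ergodic component is homogeneous, supported on a closed orbit inside $Lx$; a linearization argument shows that the set of points of $Lx$ with non-dense $U$-orbit has $\nu$-measure zero (since $x$ itself has dense $U$-orbit in $Lx$), so $\nu$-almost every component is the homogeneous measure on all of $Lx$, i.e.\ $\nu=\mu_L$. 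As this limit is independent of the subsequence, $\mu_T\to\mu_L$, which is the assertion.

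The heart of the matter, and the step I expect to be by far the hardest, is the measure classification. Its engine is the \emph{polynomial divergence}, or \emph{shearing}, property of unipotent flows: if $y_1,y_2$ are nearby points with $y_2=gy_1$, $g$ close to the identity and outside the centralizer of $U$, then $u(r)y_2 = g_r\,u(r)y_1$ with $g_r = u(r)gu(-r)$ polynomial in $r$, and the dominant term of $g_r$ as $r$ grows points along $Z_G(U)$ (equivalently, along a highest-weight vector for the nilpotent $\mathrm{ad}$-action). In the rank-one model this means that two nearby points, when flowed, end up differing by a pure time-shift --- the classical shearing picture --- while in general it creates displacement in the extra directions of $Z_G(U)$. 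Starting from an ergodic $U$-invariant $\mu$, I would use the Birkhoff pointwise ergodic theorem for flows to produce a large set of $\mu$-generic points, pair nearby generic points, flow them until they have diverged by a fixed small amount, pass to a limit of the transverse displacements, and thereby enlarge the invariance group $\Lambda := \{g\in G : g_*\mu = \mu\}$ by a new one-parameter subgroup; iterating, and using the fact that $\mu$ is the homogeneous measure on a single $\Lambda$-orbit once $\Lambda$ is large enough, terminates with $\mu$ homogeneous. The genuine difficulties are ensuring the transverse displacement does not degenerate in the limit --- which forces a careful choice of the pairs of generic points and of the stopping time --- and handling the steps where shearing produces no new invariance, for which one uses either Ratner's combinatorial ``R-property'' or an entropy argument in the style of Margulis--Tomanov and Einsiedler--Katok--Lindenstrauss.

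For the topological rigidity statement I would follow the linearization method of Dani--Margulis. A finiteness theorem (due to Ratner) gives that, up to conjugacy, there are only finitely many closed subgroups $H$ with $H\cap\Gamma$ a lattice in $H$; each determines a proper closed ``singular'' subset of $X$, and one realizes these inside finite-dimensional representations of $G$ in which $H$ becomes the stabilizer of a point or line. Analyzing the polynomial curves $r\mapsto\rho(u(r))v$ then shows that a $U$-orbit either remains, in a controlled way, within a tube around one of these singular sets or spends almost all its time in the complement; combined with the measure classification this identifies $\overline{Ux}$ with a homogeneous space $Lx$ and shows it is closed. The nondivergence input rests on the same machinery: polynomial maps into the relevant representations are $(C,\alpha)$-good, which yields that $\mu_T$ assigns mass at most $\epsilon$ to any fixed neighborhood of infinity uniformly in $T$, hence tightness of $\{\mu_T\}$. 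This completes the plan; essentially all the analytic weight is carried by the shearing argument of the second paragraph.
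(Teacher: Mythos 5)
The paper does not prove this theorem: it is quoted as Ratner's equidistribution theorem with references to \cite{ratner_2} and to the measure classification papers, so there is no in-paper proof to compare against. Your proposal is the standard and correct route found in the literature --- Ratner's measure classification of ergodic $U$-invariant measures, the Dani--Margulis quantitative nondivergence (giving tightness of the time averages $\mu_T$), and the Dani--Margulis linearization technique to rule out concentration of a weak-$*$ limit on the countably many proper singular subsets of $Lx$ --- and the soft deduction of equidistribution from these three ingredients is laid out correctly. Keep in mind that, as written, this is a proof \emph{plan} rather than a proof: essentially all of the mathematical content is deferred to the three cited structural theorems, and the shearing/entropy discussion in your second paragraph is a heuristic for the measure classification rather than an argument one could check. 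That is entirely appropriate for reproducing a quoted background theorem, but it should be presented as such.
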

\par Since Ratner's proof relies on her measure classification theorem 
(cf. \cite{ratner-acta}, \cite{ratner-invent}, \cite{Ratner}) which proves a conjecture by Dani \cite{Dani1981}, 
it does not tell how fast it tends to equidistribution. 
Therefore, a natural question is whether we can make the equidistribution effective, namely,
giving an explicit upper bound on the difference 
 \[ \left| \frac{1}{T} \int_{-T/2}^{T/2} f(u(r)x) \dd r - \int_{Lx} f \dd \mu_L \right| \]
 for given $T >0$. Moreover, effective versions of equidistribution of $U$-orbits have many applications to number theory, 
 see  \cite{lindenstrauss-margulis}, \cite{einsiedler-mohammadi}, \cite{LM-preprint2021}, \cite{LMW-preprint2022}, 
 \cite{Venkatesh2010}, \cite{nelson-venkatesh}, \cite{einsiedler-margulis-venkatesh}, \cite{chow-yang}, \cite{Browning2016} 
 and references therein for details. As a result, proving effective versions of Ratner's theorem has 
 attracted much attention and has been a major challenge in homogeneous dynamics. 
 \par In recent years, people have made significant progress in establishing 
 effective equidistribution results. For $G$ being nilpotent, the effective version of Ratner's theorem was established by Green and Tao \cite{green_tao2012}. 
 For $G = \SL(2, \R)$, since the one-parameter unipotent subgroup is horospherical with respect to the diagonal subgroup, 
 one can apply the thickening argument developed in Margulis's thesis (cf. \cite{margulis-thesis}, \cite{Klein-Mar-Effective-Equid}) and 
 spectral gap results for unitary representations of semisimple groups 
 to establish effective equidistribution. 
 See \cite{sarnak1982}, \cite{burger1990}, \cite{Flaminio-Forni}, \cite{Sarnak-Ubis}, \cite{Strombergsson2004} and \cite{Strombergsson2013} for works in this setting. 
 In fact, equidistribution for this case was proved before Ratner's theorem, see \cite{furstenberg1973}, \cite{dani-smillie}.
  Using similar arguments one can also establish effective 
 equidistribution results for horospherical unipotent orbits in homogeneous spaces. See \cite{Klein-Mar-Effective-Equid}, \cite{edwards-effective} for results in this setting. 
 Using techniques from analytic number theory, Strombergsson \cite{Strombergsson2015} 
 established the effective equidistribution for $G/\Gamma = \SL(2, \R) \ltimes \R^2/ \SL(2,\Z) \ltimes \Z^2$ and $U$ being 
 a one-parameter unipotent (and horospherical) subgroup in the semisimple part. Building on Strombergsson's result, Chow and the author \cite{chow-yang} proved an 
 effective equidistribution result for a special family of one-parameter unipotent orbits 
 in $G/\Gamma = \SL(3,\R)/\SL(3,\Z)$. As an application, we proved that 
 Gallagher's theorem in multiplicative Diophantine approximation holds for almost every point on any given planar straight line. 
 The reader is also referred to \cite{Browning2016} for a similar effective equidistibution result which has applications to number theory.
 Strombergsson's result was recently generalized by Kim \cite{kim2021} to $\SL(n, \R) \ltimes \R^n/ \SL(n,\Z) \ltimes \Z^n$ with 
 the unipotent subgroup being horospherical in the semisimple part. 
 Recently, Lindenstrauss, Mohammadi and Wang \cite{LMW-preprint2022} established effective Ratner's 
 theorem for one-parameter unipotent orbits in $G/\Gamma$ where $G = \SL(2, \R) \times \SL(2, \R)$ or $\SL(2,\C)$. 
 The reader is referred to \cite{einsiedler-margulis-venkatesh} and \cite{einsiedler-margulis-mohammadi-venkatesh}
 for effective equidistribution of closed orbits of maximal semisimple subgroups which can be regarded as an effective version 
 of a result by Mozes and Shah \cite{Mozes_Shah}.
 \par Concerning effective density, there are also significant results. Lindenstrauss and Margulis proved \cite{lindenstrauss-margulis} 
 an effective density result for one-parameter unipotent orbits in $\SL(3,\R)/\SL(3,\Z)$ and applied it to prove an effective version of Oppenheim's conjecture.
Recently, Lindenstrauss and Mohammadi \cite{LM-preprint2021} established an optimal effective density result or unipotent orbits 
in $G/\Gamma$ for $G = \SL(2, \R) \times \SL(2, \R)$ or $\SL(2, \C)$. 

\subsection{Notation}
\label{subsec-notation}
\par Throughout this paper we will fix the following notation.
\begin{enumerate}
\item For a normed vector space $V$ and $r >0$, let $B_V(r)$ denote the set of vectors in $V$ with norm $\le r$.
\item For a Lie group $J$ and $r >0$, let $B^J(r)$ denote the $r$-neighborhood of the identity in $J$. 
\item Given a quantity $\cA$, let $O(\cA)$ denote a quantity whose absolute value is $\le C \cA$ 
where $C >0$ is an absolute constant. By $\cA \ll \cB$, we mean $\cA = O(\cB)$. By $\cA \asymp \cB$, we mean $\cA \ll \cB$ and $\cB \ll \cA$.
\item For a normed vector space $V$ and a quantity $\cA$, let $O_V(\cA)$ denote a vector in $V$ whose norm is $O(\cA)$.
\item For a measurable subset $I \subset \R$, let $\Leb(I)$ denote the Lebesgue measure of $I$. 
\item For a finite set $S$, let $|S|$ denote the cardinality of $S$.
\item For $L >0$, let $[L]$ denote the interval $[-L/2, L/2]$.
\end{enumerate}

\subsection{Main results}
\label{subsec-main-results}
\par In this paper we will focus on the following case:
 Let $G = \SL(3,\R)$, $\Gamma = \SL(3,\Z)$, $X = G/\Gamma$. Let $U = \{u(r): r\in \R \}$ be a one-parameter unipotent subgroup of $G$ defined as follows:
\begin{equation}
  \label{eq:def-U}
  u(r) := \begin{bmatrix}
    1 & 0 & 0 \\ 0 & 1 & r \\ 0 & 0 & 1
  \end{bmatrix}.
\end{equation}
Let $\mu_G$ denote the unique $G$-invariant probability measure on $X$.
 \par For $\varpi > 0$, let us define 
\begin{equation}
  \label{eq:def-compact-set}
  X_\varpi := \left\{ x = g\Gamma \in X: \|g \vv v\| \ge \varpi \text{, } \forall \vv v \in \Z^3\setminus\{\vv 0\} \text{ or } \wedge^2 \Z^3\setminus\{\vv 0\}.  \right\}
\end{equation}
\par By Mahler's criterion, every compact subset of $X$ is contained in some $X_\varpi$.
\par For $t \in \R$, let us denote 
\begin{equation}
  \label{eq:define-a}
  a(t) := \begin{bmatrix}
    1 & & \\ & e^{t/2} & \\ & & e^{-t/2}
  \end{bmatrix} \in G,
\end{equation}
and 
\begin{equation}
  \label{eq:define-a0}
  a_0(t) := \begin{bmatrix}
    e^{t/3} & & \\ & e^{-t/6} & \\ & & e^{-t/6}
  \end{bmatrix} \in G.
\end{equation}
We will prove the following result concerning effective equidistribution of $U$-orbit:
\begin{thm}
  \label{thm:effective-U-orbit}
  There exist $\eta >0$ and $t_0 >1$ such that for any $t \ge t_0$ and any $x_0 \in X$, one of the following holds:
  \par (1) For any $f \in C^\infty_c(X)$, we have 
  \[ \left|  \int_{[1]} f(u(r e^t )x_0) \dd r - \int f \dd \mu_G \right| \ll e^{-\eta t } \|f\|_S,\]
  where $\|\cdot\|_S$ denotes a fixed Sobolev norm;
  \par (2) $a(-t) a_0( \ell) x_0 \not\in X_{e^{-|\ell|/3}}$ for some $ 0.99 t \le |\ell| \le 1.01 t$.
\end{thm}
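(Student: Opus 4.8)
The plan is to establish the dichotomy in ``negated'' form: fix $t \ge t_0$ and $x \in X$, assume the cusp‑excursion alternative (2) fails for \emph{every} admissible pair $(\ell',\ell)$, and deduce the effective equidistribution (1). The first move is to recast the integral as an expanding push of a $U$‑arc. Since $a(t)u(r)a(-t)=u(e^t r)$, setting $y:=a(-t)x$ gives $\int_{[1]} f(u(re^t)x)\,dr=\int_{[1]} f\bigl(a(t)u(r)y\bigr)\,dr$, so (1) asserts that the $a(t)$‑translate of the arc $\{u(r)y:r\in[1]\}$ equidistributes. Here $U=\exp(\R E_{23})$ is a one‑dimensional subgroup of the three‑dimensional expanding horospherical subgroup of $a(t)$, whose Lie algebra is spanned by $E_{21},E_{23},E_{13}$; and $a_0(s)$ commutes with $U$, so the relevant orbit of $y$ is driven by a two‑parameter diagonal flow. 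Because diagonal matrices commute, condition (2) rewritten via $y$ says precisely that $a_0(\ell'+\ell)y\notin X_{e^{-|\ell|/3}}$, i.e.\ it is a statement about the depth of the cusp excursion of the $a_0$‑trajectory of $y$ over a ``time'' window of length $O(t)$ at scale $e^{-\Theta(t)}$, which by Mahler's criterion controls the growth rate of the shortest vectors of the lattice underlying $y$ in $\Z^3$ and in $\bigwedge^2\Z^3$. Feeding the failure of (2) into the quantitative nondivergence estimates of Dani--Margulis and Kleinbock--Margulis yields a fixed $\beta_0>0$ such that a definite proportion of the arc $\{u(r)y\}$, and of all its $a(\tau)$‑translates for $0\le\tau\le t$, stays inside the compact set $X_{\beta_0}$.

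The core is a dimension‑increment bootstrap. Suppose, for contradiction, that the $a(t)$‑pushed arc fails to equidistribute at scale $e^{-\eta t}$. Exponential mixing of the $a(t)$‑flow on $X$ (from the spectral gap for $\SL(3,\R)$), combined with the $a(t)$‑expansion of $U$, forces the arc to be abnormally concentrated; quantitatively one produces parameters $r_1,r_2\in[1]$ with $a(t)u(r_1)y$ and $a(t)u(r_2)y$ polynomially close, while the polynomial divergence of the $U$‑flow --- governed on the Lie‑algebra level by the brackets $[E_{23},\mathfrak{sl}_3]$, which already reach $E_{21}$ and $E_{13}$ --- exhibits an element $h$, polynomially close to the identity and lying outside the current almost‑invariance subalgebra $\mathfrak{l}_0=\R E_{23}$, under which the limiting distribution of the pushed arc is almost invariant. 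Iterating builds an increasing chain $\R E_{23}=\mathfrak{l}_0\subsetneq\mathfrak{l}_1\subsetneq\cdots$ of Lie subalgebras of $\mathfrak{sl}_3$, each step costing only a fixed power of the scale; since $\dim\mathfrak{sl}_3=8$ the chain stabilizes after $O(1)$ steps at some $\mathfrak{l}$.

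For the endgame: if $\mathfrak{l}=\mathfrak{sl}_3$, the pushed arc is almost $\SL(3,\R)$‑invariant, hence (by an effective form of the uniqueness of Haar measure) equidistributes, contradicting the assumption and giving (1). Otherwise $\mathfrak{l}$ is one of the finitely many proper subalgebras of $\mathfrak{sl}_3$ containing $E_{23}$ together with the directions produced by shearing --- parabolics of block type $(1,2)$ or $(2,1)$, their Levi subalgebras $\mathfrak{gl}_1\oplus\mathfrak{sl}_2$, the $\mathfrak{sl}_2$‑block of the $(2,3)$‑corner together with its centralizer, and $\R^2\rtimes\mathfrak{sl}_2$‑type subalgebras --- and in each case almost‑invariance of the arc under the corresponding connected subgroup $L$ forces the arc, hence $y$ and $x$, to lie polynomially close to a periodic $L$‑orbit. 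The injectivity radius of such a periodic orbit is controlled by the shortest vector in a rational line or rational plane stabilized by $L$, i.e.\ by a short vector of $\Z^3$ or of $\bigwedge^2\Z^3$; tracking it under $a(-t)$ and $a_0$ and comparing with the scale $e^{-|\ell|/3}$ shows that its existence is exactly what the failure of (2) forbids --- contradiction. So only $\mathfrak{l}=\mathfrak{sl}_3$ survives. As for the rate, each of the $O(1)$ increment steps, the nondivergence input, and the concluding effective‑mixing step degrades the exponent and implied constant by a bounded amount, so a sufficiently small $\eta>0$ and large $t_0$, depending only on $G$, $\Gamma$ and $U$, give (1) with the claimed bound $Ce^{-\eta t}\|f\|_S$.

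The main obstacle I anticipate is the quantitative ``extra invariance'' step: converting mere non‑equidistribution into two nearby arc points whose $U$‑shearing produces a genuinely new almost‑invariance direction with fully explicit polynomial control, while \emph{simultaneously} keeping the arc away from the cusp throughout the iteration --- the non‑compactness of $X$ means each bootstrap step must be re‑accompanied by a nondivergence estimate, and the cusp excursions must be bookkept precisely enough that they can only be caused by the mechanism isolated in (2). The second delicate point is the endgame dictionary: showing that almost‑invariance under each admissible proper subgroup $L$ translates \emph{precisely} --- not merely up to an unspecified exponent --- into a violation of (2), so that the two‑parameter diagonal family and the depth $e^{-|\ell|/3}$ appearing there are exactly calibrated to the periodic‑orbit volumes that can occur. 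Matching these calibrations is where the bulk of the technical effort will lie.
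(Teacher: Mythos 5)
Your opening reduction (conjugating $u(re^t)$ to $a(t)u(r)a(-t)$ and passing to the pushed arc $a(t)u([1])y$) is exactly the paper's reduction of Theorem \ref{thm:effective-U-orbit} to Theorem \ref{thm:main-thm}, and the use of Kleinbock--Margulis quantitative non-divergence to keep a definite proportion of the arc in a fixed compact set under the failure of (2) matches \S\ref{sec-quantitative-nondivergence} and Definition \ref{def:initial-construction}. After that, however, the proposal has a genuine gap at its central step. You claim that non-equidistribution plus ``exponential mixing of the $a(t)$-flow, combined with the $a(t)$-expansion of $U$'' forces abnormal concentration and hence yields two nearby arc points whose shearing produces a new almost-invariance direction. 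But $U$ is a one-dimensional subgroup of the three-dimensional expanding horospherical subgroup of $a(t)$, so the thickening/mixing argument does not apply, and ``mixing forces concentration'' is precisely the assertion that needs proof here --- you flag it as the main obstacle but supply no mechanism. The paper's substitute is not an invariance-subalgebra bootstrap at all: it tracks a \emph{transversal dimension} $d_i$ of the pushed arc at a decreasing sequence of scales $s_i$ (Definition \ref{def:dimension-control}), proves an initial bound by counting returning lattice elements $\gamma_x$ and invoking non-divergence (Proposition \ref{prop:initial-bound-dimension}), and then improves $d_i$ by a weighted incidence count in a Kakeya-type model of tubes built from the polynomial divergence formulas \eqref{eq:fvy}--\eqref{eq:fy} (Definition \ref{def:kakeya-model}, Proposition \ref{prop:improve-dimension}). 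Nothing in your sketch substitutes for this counting argument.

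The endgame is also structurally off. You propose to rule out every proper intermediate subalgebra $\mathfrak{l}$ by showing that proximity to a periodic $L$-orbit contradicts the failure of (2). Condition (2) is calibrated only to yield the initial non-divergence and dimension bound; it does \emph{not} exclude the arc lying close to a closed orbit of $H_1$ or $H_2$ (the two copies of $\SL(2,\R)\ltimes\R^2$ in \S\ref{sec-structured-component}), and the paper accordingly does not try to exclude them. Instead, Lemma \ref{lm:integrable-surface} shows these are the only ``integrable'' obstructions, and Proposition \ref{prop:structured-component} proves that the structured component supported near them still equidistributes effectively, via the Van der Corput inputs of Propositions \ref{prop:high-dimension-to-equidistribution-a1} and \ref{prop:high-dimension-to-equidistribution-b} applied with $a_1(s')$ and $b(s')$. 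To make your endgame work you would need a Diophantine hypothesis on $x$ strictly stronger than (2), which is not available; so both the increment step and the treatment of the intermediate obstructions need to be replaced by arguments of the kind the paper actually gives.
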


\par Proving Theorem \ref{thm:effective-U-orbit} is equivalent to proving the following theorem:
\begin{thm}
  \label{thm:main-thm}
    Let $\eta, t_0 >0$ be as above. For any $t \ge t_0$, and any $x_0 \in G/\Gamma$, one of the following holds:
  \par (1) For any $f \in C^\infty_c(X)$, we have 
  \[ \left| \int_{[1]} f(a(t) u(r) x_0 ) \dd r  - \int f \dd \mu_G \right| \ll e^{-\eta t} \|f\|_S;\]
  \par (2) $ a_0 (\ell )x_0 \not\in X_{e^{-|\ell|/3}}$ for some  $ 0.99 t \le  |\ell| \le 1.01 t$.
\end{thm}
\par In fact, it follows from the following equality: 
\[  \int_{[1]} f(a(t) u(r) x_0 ) \dd r =  \int_{[1]} f(u(r e^t )y_0) \dd r, \]
where $y_0 = a(t)x_0$.

\par Using Theorem \ref{thm:main-thm}, we can prove the following two corollaries on effective equidistribution of 
expanding curves under translates of diagonal subgroups:

\begin{cor}
  \label{cor:straight-line}
  Let us denote 
  
  $$a_1 (t) := a(t) a_0(t) = \begin{bmatrix}
    e^{t/3} & & \\ & e^{t/3} & \\ & & e^{-2t/3}
  \end{bmatrix}.$$
  Let $ \vv w = (w_1, w_2) \in \R^2$ be a vector with Diophantine 
  exponent $\omega(\vv w) = \kappa \le 0.6$, namely, for any positive integer 
  $q \in \Z_+$, we have 
  $$\max \{\langle q w_1 \rangle, \langle q w_2 \rangle\} \gg q^{-\kappa},$$
  where $\langle \cdot \rangle$ denotes the distance to the nearest integer. 
  For $\vv v =(v_1, v_2) \in \R^2$ let us denote 
  \[ n(\vv v) := \begin{bmatrix}
    1 & & v_2 \\ & 1 & v_1 \\ & & 1
  \end{bmatrix}. \]
  Let us define
  \[\varphi: [1] \to \R^2  \]
  by $\varphi(r) := (w_1 r + w_2 , r)$.
  Then there exist $ \eta, t_0 >0$, such that for any $t \ge t_0$ and any $f \in C^\infty_c(X)$ we have 
  \[ \left| \int_{[1]} f(a_1(t) n(\varphi(r) )\Gamma)   \dd r - \int f \dd \mu_G  \right| \ll e^{-\eta t} \|f\|_S. \] 
\end{cor}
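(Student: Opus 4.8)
The plan is to reduce the statement to Theorem \ref{thm:effective-U-orbit} (or equivalently Theorem \ref{thm:main-thm}) applied at a suitable point, and then to verify that alternative (2) cannot occur under the Diophantine hypothesis on $\vv w$. First I would rewrite the integrand so that the curve $r \mapsto a_1(t) n(\varphi(r))\Gamma$ is presented as a piece of a $U$-orbit after applying a diagonal flow. Since $a_1(t) = a(t)a_0(t)$ and $n(\vv v)$ is upper-triangular, a direct matrix computation should show that $a_0(t) n(\varphi(r)) a_0(t)^{-1}$ and $a(t) n(w_1 r + w_2, r) a(t)^{-1}$ conjugate the unipotent coordinate in a controlled way; the aim is to produce an identity of the form
\[
  a_1(t) n(\varphi(r)) \Gamma = u(r e^{\sigma t}) \, g_t \Gamma
\]
for an appropriate rescaling $\sigma > 0$ and a point $g_t \Gamma$ depending on $\vv w$ and $t$ but not on $r$, possibly after absorbing the constant shift $w_2$ and the $w_1 r$ term into $g_t$ and a reparametrization of the interval $[1]$. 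Once the curve is in this normal form, Theorem \ref{thm:effective-U-orbit} applies to $x = g_t \Gamma$ with $t$ replaced by $\sigma t$ (adjusting $C, \eta$ accordingly), and it remains to rule out case (2).

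The second and main step is the Diophantine one: I must show that the excursion condition in alternative (2), namely $a(-t) a_0(\ell' + \ell) g_t \Gamma \notin X_{e^{-|\ell|/3}}$ for some $0 \le \ell' \le t$ and $0.9 t \le |\ell| \le t$, is incompatible with $\omega(\vv w) = \kappa \le 1/2 + 1/10$. Tracking the point $g_t \Gamma$ through the diagonal flow, the relevant lattice vectors in $\Z^3$ and $\bigwedge^2 \Z^3$ that could become short are built from $(q, qw_1 + p_1, qw_2 + p_2)$ and their wedges for integers $q, p_1, p_2$; the diagonal element $a(-t)a_0(\ell'+\ell)$ expands and contracts these coordinates at rates determined by $\ell', \ell$. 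The point of the precise exponents $e^{-|\ell|/3}$ and the range $0.9t \le |\ell| \le t$ in Theorem \ref{thm:effective-U-orbit} is exactly that a short vector at that depth forces $\max\{\langle q w_1\rangle, \langle q w_2 \rangle\} < q^{-\kappa}$ for some $q$ in a range compatible with the flow times — contradicting the hypothesis once $\kappa$ is below the threshold $1/2 + 1/10$. I would make this quantitative by writing out, for each candidate short vector, the inequality its smallness imposes, converting it into a bound of the form $\langle q w_i \rangle \le q^{-(1/2 + 1/10)}$ via the relations between $\ell, \ell', t$, and then invoking the definition of $\omega(\vv w)$ to get a contradiction for all large $t$.

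The routine parts — the explicit conjugation identities, the choice of $\sigma$, the bookkeeping of how $C$ and $\eta$ transform, and handling the constant shift $w_2$ (which only translates the base point) — I would relegate to a short computation. The genuine obstacle is the Diophantine exclusion of alternative (2): one has to check that \emph{every} short-vector scenario permitted by the ranges $0 \le \ell' \le t$ and $0.9 t \le |\ell| \le t$ translates into a rational approximation to $\vv w$ that violates the exponent bound, and that the threshold $1/2 + 1/10$ is precisely what makes the worst case work. This requires carefully matching the geometry of the $a_0$-flow (which governs how deep into the cusp the orbit can go, as encoded in Theorem \ref{thm:effective-U-orbit}(2)) with the arithmetic of best rational approximations, and it is where the constant $1/10$ and the shape of $X_\beta$ (involving both $\Z^3$ and $\bigwedge^2 \Z^3$) are used in an essential way. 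Everything else is a reduction to the already-established Theorem \ref{thm:effective-U-orbit}.
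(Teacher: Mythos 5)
Your proposal follows essentially the same route as the paper: the paper writes $a_1(t)n(\varphi(r)) = z(w_1)\,a(t)u(r)\,a_0(t)n^\ast(-w_1,w_2)$, applies Theorem \ref{thm:main-thm} at the fixed base point $x = a_0(t)n^\ast(-w_1,w_2)\Gamma$, and rules out alternative (2) by the Diophantine hypothesis, exactly as you outline (and the paper is no more detailed than you are on that arithmetic verification). The one adjustment to your normal form is that the $w_1 r$ term cannot be absorbed into $g_t$ or a reparametrization of the interval --- it leaves a constant left factor $z(w_1)$, which the paper handles by replacing $f$ with $f_{w_1}(y) = f(z(w_1)y)$, whose Sobolev norm is comparable to that of $f$.
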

\begin{proof}[Proof assuming Theorem \ref{thm:main-thm}]
  \par Note that 
  \[  a_1(t) n(\varphi(r)) = z(w_1) a(t) u(r) a_0(t) n^\ast (-w_1, w_2) ,\]
  where 
  \[  z(w_1) := \begin{bmatrix}
    1 & w_1 & \\ & 1 & \\ & & 1
  \end{bmatrix}, \]
  and 
  \[  n^\ast (-w_1, w_2) := \begin{bmatrix}
    1 & -w_1 & w_2 \\ & 1 & \\ & & 1
  \end{bmatrix}. \]
  Then for any $f \in C^\infty_c(X)$ with zero integral, we have that 
  \[ \int_{[1]} f(a_1(t) n(\varphi(r))\Gamma) \dd r = \int_{[1]} f_{w_1}(a(t) u(r) a_0(t) n^\ast (-w_1, w_2)\Gamma) \dd r, \]
  where $f_{w_1}(x) = f(z(w_1)x)$. Note that since $\mu_G$ is $G$-invariant, $f_{w_1}$ also has zero integral.
  \par Let $x_0 = a_0(t) n^\ast(-w_1, w_2)\Gamma$. 
  Note that the Diophantine condition on $\vv w$ ensures that $a_0 (\ell)x_0 \in X_{e^{-|\ell|/3}}$ for 
  any $0.99t \le |\ell| \le 1.01 t$. 
  Applying Theorem \ref{thm:main-thm} with $x_0 = a_0(t) n^\ast(-w_1, w_2)\Gamma$ and $f_{w_1}$, we have that 
  \[ \left| \int_{[1]} f_{w_1}(a(t) u(r) a_0(t) n^\ast (-w_1, w_2)\Gamma) \dd r \right| \ll e^{-\eta t} \|f_{w_1}\|_S. \]
  Noting that $\|f_{w_1}\|_S \ll \|f\|_S$, we complete the proof.
\end{proof}

\begin{remark}
  \label{rmk:expanding-line}
  The qualitative version of Corollary \ref{cor:straight-line} was proved in \cite{KNSY} for $\omega(\vv w) \le 2$ using Ratner's theorem. 
  The authors also proved that the equidistribution does not hold for $\omega(\vv w) > 2$. 
  
\end{remark}

\begin{cor}
  \label{cor:curve}
  Let us use the same notation as in Corollary \ref{cor:straight-line}. Let $\psi : [1] \to \R^2$ be a 
  smooth non-degenerate curve in $\R^2$, namely, derivatives of $\psi$ span the whole space $\R^2$ at every $r \in [1]$.
  Then there exist $\eta, t_0>0$ such that for any $t \ge t_0$ and any $f \in C^\infty_c(X)$ we have 
  \[ \left| \int_{[1]} f(a_1(t) n(\psi(r) )\Gamma)   \dd r - \int f \dd \mu_G  \right| \ll e^{-\eta t} \|f\|_S. \]
\end{cor}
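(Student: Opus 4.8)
The plan is to reduce the non-degenerate curve to the straight-line case of Corollary \ref{cor:straight-line} by a linearization argument on exponentially short subintervals, and to control the resulting error terms using the dichotomy of Theorem \ref{thm:main-thm} together with quantitative non-divergence for non-degenerate curves.

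First one may assume $\int f \dd\mu_G = 0$, since subtracting this constant reproduces the main term on the right and changes $\|f\|_S$ only by a bounded amount. Because $\psi$ is non-degenerate, after splitting $[1]$ into finitely many subintervals and applying a bounded change of coordinates (a rotation in the upper $2\times 2$ block, which commutes with $a_1(t)$) we may assume $\psi(r)=(h(r),r)$ with $h\in C^\infty$ non-degenerate (no subinterval on which $h$ is affine). Fix $\delta\in(1/2,1)$ and partition $[1]$ into $N\asymp e^{\delta t}$ intervals $I_j$ of length $\le\ell_0:=e^{-\delta t}$ with midpoints $r_j$. On $I_j$ put $L_j(r):=\psi(r_j)+\psi'(r_j)(r-r_j)$; Taylor's theorem gives $\psi(r)-L_j(r)=(O(\ell_0^2),0)$, so, using that $\vv v\mapsto n(\vv v)$ is a homomorphism of $(\R^2,+)$ and that $a_1(t)$ conjugates $n(\vv v)$ to $n(e^t\vv v)$,
\[ a_1(t)n(\psi(r))\Gamma = u\!\left(O(e^t\ell_0^2)\right)\, a_1(t)\,n(L_j(r))\Gamma . \]
Since $\delta>1/2$ we have $e^t\ell_0^2=e^{(1-2\delta)t}\to 0$, so this displacement is negligible and contributes only $O(e^{(1-2\delta)t}\|f\|_S)$ to the integral after summation over $j$. (Note that taking $\delta>1/2$ is exactly what lets us use the plain linearization and avoid any equidistribution statement for polynomial trajectories.)

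It remains to treat $\sum_j\int_{I_j}f(a_1(t)n(L_j(r))\Gamma)\dd r$. Now $L_j$ is of the form $\varphi$ appearing in Corollary \ref{cor:straight-line}, with slope data $\vv w^{(j)}=(h'(r_j),\,h(r_j)-h'(r_j)r_j)$, so the factorization used there gives $a_1(t)n(L_j(r))\Gamma=z(w_1^{(j)})a(t)u(r)a_0(t)n^\ast(-w_1^{(j)},w_2^{(j)})\Gamma$. Rescaling $I_j$ to $[1]$ via $\sigma:=e^{\tau}(r-r_j)$ with $\tau:=-\log\ell_0=\delta t$, and using $a(t)u(r)=a(t')u(e^\tau r)a(\tau)$ with $t':=(1-\delta)t$, this becomes
\[ \int_{I_j}f(a_1(t)n(L_j(r))\Gamma)\dd r = \ell_0\int_{[1]}f_{w_1^{(j)}}\!\big(a(t')u(\sigma)\,\tilde x_j\big)\dd\sigma, \qquad \tilde x_j:=a(\tau)u(r_j)a_0(t)n^\ast(-w_1^{(j)},w_2^{(j)})\Gamma, \]
where $f_w(y)=f(z(w)y)$ has zero integral and $\|f_{w_1^{(j)}}\|_S\ll\|f\|_S$ uniformly (as $w_1^{(j)}=h'(r_j)$ is bounded). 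Apply Theorem \ref{thm:main-thm} with $x=\tilde x_j$, with $t$ replaced by $t'$, and $f$ replaced by $f_{w_1^{(j)}}$. For a piece where alternative (1) holds the contribution is $\le\ell_0 Ce^{-\eta t'}\|f\|_S$, and these sum to $O(e^{-\eta(1-\delta)t}\|f\|_S)$. For a \emph{bad} piece, where alternative (2) holds, we only use $\bigl|\int_{I_j}f(a_1(t)n(\psi(r))\Gamma)\dd r\bigr|\le\ell_0\|f\|_S$; so it suffices to prove $\#\{j:\text{alternative (2) holds for }\tilde x_j\}\le e^{(\delta-\eta_2)t}$ for some $\eta_2>0$.

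The hard part is this last estimate, and it is where the non-degeneracy of $\psi$ enters. Unwinding the factorization (one has $\tilde x_j=a(-t')z(-w_1^{(j)})a_1(t)n(\psi(r_j))\Gamma$), alternative (2) for $\tilde x_j$ says that some diagonal-flow translate of the lattice $n(\psi(r_j))\Gamma$ makes an excursion into the cusp of depth $\asymp e^{-t'/3}$; concretely, that some primitive vector of $\Z^3$ or $\bigwedge^2\Z^3$, acted on by a matrix of the form (diagonal with exponents $O(t)$)$\cdot$(shear)$\cdot n(\psi(r_j))$, has length $<e^{-|\ell|/3}$ for some admissible $\ell,\ell'$. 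Since $h$ is non-degenerate, the coordinate functions and the relevant bounded-degree polynomial combinations of $1,r,h(r),h'(r)$ are $(C,\alpha)$-good in the sense of Kleinbock--Margulis, so the set of parameters $r$ for which such a deep excursion occurs has measure $\le e^{-\alpha t'/3+O(\log t)}=e^{-\Theta(t)}$, and since the corresponding sublevel sets are unions of boundedly many intervals (after approximating $h,h'$ by their Taylor polynomials on the $I_j$), this persists after enlarging by $\ell_0$, which yields the required count. The genuinely delicate point is that alternative (2) is stated for the single discretized point $\tilde x_j$, whereas the measure estimate concerns all of $I_j$: one must show that a depth-$e^{-t'/3}$ excursion of $\tilde x_j$ forces, up to a bounded loss in the depth, an excursion of $\tilde x(r)$ for every $r\in I_j$. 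The naive Lipschitz bound fails, because the $O(\ell_0)$ perturbation inside $I_j$ is amplified by the condition number $e^{\Theta(t)}$ of the diagonal factors. Getting around this should use: (i) the choice $\delta$ slightly above $1/2$, which makes the perturbation coming from $a_0(t)$ of size $e^{(1/2-\delta)t}\to 0$; (ii) the two-sided ranges $0.9t'\le|\ell|\le t'$, $0\le\ell'\le t'$ in alternative (2), together with the duality under $a_0$ between excursions of $\Z^3$ at positive times and of $\bigwedge^2\Z^3$ at negative times, so that the remaining (expanding) perturbation directions can always be matched by a contracting flow direction; and (iii) the quantitative form of the non-divergence estimate. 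This bookkeeping is the technical heart of the argument; the remaining steps are routine.
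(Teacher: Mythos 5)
Your plan follows essentially the same route as the paper: partition $[1]$ into exponentially short subintervals, linearize $\psi$ on each, use the factorization from Corollary \ref{cor:straight-line} to rewrite each piece as $a(t')u(\cdot)$ applied to a base point, invoke Theorem \ref{thm:main-thm} piecewise, and control the exceptional pieces by quantitative non-divergence for non-degenerate curves. Two remarks on the comparison. First, your choice $\delta\in(1/2,1)$ is a genuine improvement in rigor over the paper, which uses pieces of length exactly $e^{-t/2}$: there the quadratic Taylor error $O(e^{-t})$ in the $(1,3)$-entry is expanded by $a_1(t)$ to size $O(1)$, so the claimed approximation of $\{a_1(t)n(\psi(r))\Gamma\}$ by the linearized family is not actually negligible as stated, whereas with $\delta>1/2$ the error $e^{(1-2\delta)t}$ honestly tends to zero (at the harmless cost of applying the main theorem with $t'=(1-\delta)t$). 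Second, the step you single out as "the technical heart" — transferring alternative (2) of Theorem \ref{thm:main-thm} from the single discretized base point $\tilde x_j$ to a measure estimate over $r$, so that the count of bad $j$ is controlled by $e^{\delta t}$ times the measure of the bad set — is precisely the point the paper's proof elides: it bounds $|\fm_{\ell''}|$ via Bernik--Kleinbock--Margulis for $r_0$ ranging continuously (and for a continuum of $\ell''$, which also needs a unit-spaced discretization), and then removes these $r_0$ without justifying that badness of a midpoint propagates, at a slightly relaxed cusp depth, to an interval of length comparable to the grid spacing. So you have correctly identified the real gap; your sketch of how to close it (exploiting that the perturbation in the contracted $a_0$-direction is controlled and that the sublevel sets of the relevant $(C,\alpha)$-good functions are unions of boundedly many intervals) is the right idea, but as written it remains a plan rather than a proof, and this is the one step you would need to carry out in full — the paper itself does not supply it either.
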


\begin{proof}[Proof assuming Theorem \ref{thm:main-thm}]
  \par Let us denote $\psi(r) = (r, \psi_2(r))$. Let us divide $[0,1]$ into small pieces of size $e^{-t/2}$. Let us fix a small piece 
  $\Delta(r_0) = [r_0 - 1/2 e^{-t/2} , r_0 + 1/2 e^{-t/2}]$. Then
  \[\{a_1(t)n(\psi(r))\Gamma: r \in \Delta(r_0) \}\]
  can be approximated by 
  \[ \{z(\psi_2'(r_0)) a(t/2) u(r) x_0: r \in [-1/2,1/2]\} \]
  with exponentially small error, where 
  \[ z (\psi_2'(r_0)) = \begin{bmatrix}
    1 & \psi_2'(r_0) & \\ & 1 & \\ & & 1
  \end{bmatrix},\]
  as defined in Corollary \ref{cor:straight-line}, and 
  \[ x_0 = a_0(t/2) a_1(t/2) z(-\psi_2'(r_0)) n(\psi(r_0)) \Gamma. \]
  Then if we can show that
  \[ \left\{ a(t/2) u(r) x_0: r \in [1] \right\} \]
  effectively equidistributed, we will get that 
  \[\left\{a_1(t)n(\psi(r)): r \in \Delta(r_0) \right\}\]
  is effectively equidistributed. By Theorem \ref{thm:main-thm}, if we have that for any 
  $0.49 t \le |\ell| \le 0.51 t$, 
  \[ a_0(\ell)a_0(t/2) a_1(t/2) z(-\psi_2'(r_0)) n(\psi(r_0)) \Gamma \in X_{e^{-t/10}},  \]
  then we are done. Now for a fixed $|\ell| \in [ 0.49 t , 0.51 t]$, let us estimate the measure of 
  \[ \fm_{\ell} :=  \left\{ r_0\in [1]:a_0(\ell)a_0(t/2) a_1(t/2) z(-\psi_2'(r_0)) n(\psi(r_0)) \Gamma \not\in X_{e^{-t/10}} \right\}.\]
  By \cite[Theorem 1.4]{Bernik-Kleinbock-Margulis}, we have that 
  $$\Leb\left(\fm_{\ell}\right) = O(e^{-\eta_2 t})$$ 
  for some constant $\eta_2 >0$ independent of $\ell$. Let us remove all 
  $\fm_{\ell}$ from $[1]$ and get a subset $\fm \subset [1]$. Then we have that 
  $$\Leb \left([1]\setminus \fm\right) = O(t e^{-\eta_2 t}),$$
  and for any $r_0 \in \fm$, 
  \[\left\{a_1(t)n(\psi(r))\Gamma: r \in \Delta(r_0) \right\}\]
  is effectively equidistributed in $X$. Combining these two facts we conclude that the whole orbit 
  \[ \left\{a_1(t) n(\psi(r)) \Gamma : r \in [1]\right\} \]
  is effectively equidistributed. This completes the proof.
\end{proof}

\begin{remark}
  \label{rmk:expanding-curve}
  The ineffective version of Corollary \ref{cor:curve} was proved in \cite{Shah_2} using Ratner's theorem.
\end{remark}

\subsection*{Acknowledgements} The author thanks Wen Huang, Dmitry Kleinbock, Elon Lindenstrauss, Amir Mohammadi, 
Nimish Shah, Ralf Spatzier, Zhiren Wang and Barak Weiss for valuable discussions and Victor Beresnevich, 
Sam Chow, Wen Huang, Elon Lindenstrauss, Jens Marklof and Pengyu Yang for valuable 
comments on an earlier version of the paper. Especially, the author is deeply indebted to Elon Lindenstrauss for spending an enormous amount of time line-by-line checking the details of the proofs, 
and giving numerous valuable comments and suggestions, which greatly improved the exposition of the present version. 
The author is supported in part by NSFC grant No. 12171338 and the Shiing-Shen Chern membership at the Institute for Advanced Study.


\section{Preliminaries}
\label{sec-prelim}
\par In this section we recall some basic facts on $\SL(3, \R)$ and its Lie algebra 
which will be used in the proof of our main theorem.
\par Let $G = \SL(3,\R)$, and $H \subset G$ be the following subgroup of $G$:

\begin{equation}
\label{eq:def-H}
H := \left\{ \begin{bmatrix} 1 & \\ & h  \end{bmatrix} \in G : h \in \SL(2,\R) \right\}.
\end{equation}
Clearly $H$ is isomorphic to $\SL(2,\R)$.
\par Let $\fg$ and $\fh$ denote the Lie algebras of $G$ and $H$, respectively.
\par Consider the adjoit action of $H$ on $\fg$, we have the following decomposition:
\begin{equation}
  \label{eq:decomp-g}
  \fg = \fh + \fr_0 + \fr_1 + \fr_2,
\end{equation}
where $\fr_0, \fr_1, \fr_2$ are irreducible subrepresentations with respect 
to the adjoint action of $H$, 
and $\dim \fr_1 = \dim \fr_2 =2$, $\dim \fr_0 =1$.
In particular, $\fr_0 = \R \fa_0$ where
\[ \fa_0 = \begin{bmatrix}
  1/3 & 0 & 0 \\ 0 & -1/6 & 0 \\ 0 & 0 & -1/6
\end{bmatrix};  \]
$\fr_1 = \R \bv_1 + \R \bv_2$ where 
\[ \vv v_1 = \begin{bmatrix}
  0 & 0 & 0 \\ 
  1 & 0 & 0 \\
  0 & 0 & 0
\end{bmatrix}, \]
and 
\[\vv v_2 = \begin{bmatrix}
  0 & 0 & 0 \\ 
  0 & 0 & 0 \\
  1 & 0 & 0
\end{bmatrix};\]
$\fr_2 = \R \bw_1 + \R \bw_2$ where 
\[ \vv w_1 = \begin{bmatrix}
  0 & 0 & 1 \\ 
  0 & 0 & 0 \\
  0 & 0 & 0
\end{bmatrix}, \]
and 
\[\vv w_2 = \begin{bmatrix}
  0 & 1 & 0 \\ 
  0 & 0 & 0 \\
  0 & 0 & 0
\end{bmatrix}.\]
The adjoint action of $H$ on $\fr_1$ and $\fr_2$ are given as 
follows: the adjoint action of $H$ on $\fr_1$ is the same as the standard action of $\SL(2,\R)$ on $\R^2$ 
if we choose $\{\vv v_1, \vv v_2\}$ as the basis; for $h$ corresponding to $\begin{bmatrix}
  a & b \\ c & d
\end{bmatrix}$ and $\vv w = x_1 \vv w_1 + x_2 \vv w_2$, 
\[\Ad (h) \vv w = (a x_1 - b x_2) \vv w_1 + (-c x_1 + d x_2) \vv w_2.\]

\par Let $\{\fa , \fu, \fu^\ast \}$ denote the standard basis of $\fh$, where 
\[ \fa := \begin{bmatrix}
  0 & 0 & 0 \\ 0 & 1/2 & 0 \\ 0 & 0 & -1/2 
\end{bmatrix}, \]

\[ \fu := \begin{bmatrix}
  0 & 0 & 0 \\ 0 & 0 & 1 \\ 0 & 0 & 0 
\end{bmatrix}, \]

and
\[ \fu^\ast := \begin{bmatrix}
  0 & 0 & 0 \\ 0 & 0 & 0 \\ 0 & 1 & 0 
\end{bmatrix}. \]

\par Let us denote 
\[ A := \left\{ a(t) = \exp(t \fa): t \in \R \right\} \subset H, \]
\[U := \left\{u(r) := \exp(r \fu) : r \in \R \right\} \subset H,\]
\[U^\ast := \left\{u^\ast (r) := \exp(r \fu^\ast): r \in \R \right\} \subset H,\]
and 
\[ A_0 := \left\{ a_0(t) := \exp(t \fa_0) : t \in \R \right\} \subset Z_G(H). \]
Note that the definitions of $a(t)$ and $a_0(t)$ here match \eqref{eq:define-a} 
and \eqref{eq:define-a0}, respectively.
The adjoint action of $a_0(t)$ on $\fg$ is as follows:
\[ \Ad (a_0(t)) \bh = \bh, \text{ for } \bh \in \fh, \]
\[\Ad (a_0(t)) \bv = e^{-t/2} \bv, \text{ for } \bv \in \fr_1, \]
and 
\[ \Ad (a_0(t)) \bw = e^{t/2} \bw, \text{ for } \bw \in \fr_2. \]
Let us denote 
\begin{equation}
  \label{eq:def-b}
  b(t) := a(t)a_0(-t) = \begin{bmatrix} e^{-t/3} & & \\ & e^{2t/3} & \\ & & e^{-t/3} \end{bmatrix} \in G, \end{equation}
and
\begin{equation}
  \label{eq:def-a1}
  a_1(t) := a(t)a_0(t) = \begin{bmatrix}
    e^{t/3} & & \\ & e^{t/3} & \\ & & e^{-2t/3}
  \end{bmatrix} \in G.
\end{equation}
\par Let us denote 
\begin{equation}
  \label{eq:r+}
  \vv r^+ := \R \vv w_1 + \R \vv v_1,
\end{equation}
and 
\begin{equation}
  \label{eq:r-}
  \vv r^-:= \R \vv w_2 + \R \vv v_2,
\end{equation}
Let us denote by $p_{1,2}: \fg \to \fr_1 + \fr_2$ the projection of $\fg$ to $\fr_1 + \fr_2$. For $j=0, 1,2$, 
let $p_j : \fg \to \fr_j$ denote the projection from $\fg$ to $\fr_j$. 
 Let $p_+$, $p_-$, $p_{\vv w_j}$, $p_{\vv v_j}$ and $p_{\fu^\ast}$ denote 
 the projection to $\vv r^+$, $\vv r^-$, $\R \vv w_j$, $\R \vv v_j$ and $\R \fu^\ast$, respectively.

 \par We will need the following simple lemma on the Lie algebra of $\SL(3, \R)$:
 \begin{lemma}
    \label{lm:lie-bracket}
    For $\vv w^+ \in \vv r^+$, and $\vv w^- \in \vv r^-$,
    \[ [\vv w^+, \vv w^-] \in \R \fa + \R \fa_0, \]
    \[ [\vv w^+, \fa] , [\vv w^+, \fa_0] \in \vv r^+, \]
    \[ [\vv w^-, \fa] , [\vv w^-, \fa_0] \in \vv r^-, \]
    \[ [\vv w^-, \fu] \in \vv r^+, [\vv w^-, \fu^\ast] =\vv 0, \]
    and 
    \[ [\vv w^+, \fu^\ast] \in \vv r^-, [\vv w^+, \fu] = \vv 0. \]
    For $\vv w_1^+, \vv w_2^+ \in \vv r^+$,
    \[ [\vv w_1^+, \vv w_2^+] \in \R \fu. \]
    For $\vv w_1^-, \vv w_2^- \in \vv r^-$,
    \[ [\vv w_1^-, \vv w_2^-] \in \R \fu^\ast. \]
    For $\vv h \in \fh$, 
    \[ [\fa_0, \vv h] = \vv 0. \]
    Also, we have 
    \[ [\fa, \fu] = \fu, \text{  } [\fa, \fu^\ast] = - \fu^\ast, \text{ and }[\fu, \fu^\ast] = 2 \fa. \]
 \end{lemma}
 \begin{proof}
     \par The statements can be verified by direct calculation.
 \end{proof}
 \par We also need the following lemmata concerning the Lie algebra coordinate of the product of two elements in $G$:
 \begin{lemma}
     \label{lm:product-lie-algebra-1}
     \par Let $0 < \rho_1, \rho_2 < 10^{-10}$ be small constants. For $\vv w \in B_{\fr_1 + \fr_2}(\rho_1)$ and $\vv h \in B_{\R \fa_0 + \fh}(\rho_2)$, we have that 
     \begin{equation} 
     \label{eq:product-w-h-1}
     \exp(\vv w) \exp(\vv h) = \exp(\tilde{\vv w} +\tilde{\vv h}), \end{equation}
     where $\tilde{\vv w} \in \fr_1 + \fr_2$ with $\|\vv w - \tilde{\vv w}\| \le 2\rho_2 \|\vv w\|$, and 
     $\tilde{\vv h} \in \R \fa_0 + \fh$ with 
     $$\|\vv h - \tilde{\vv h} \| \le 2\rho_1 \|\vv h\|.$$
     \par Conversely, for $\tilde{\vv w} \in B_{\fr_1 + \fr_2}(\rho_1)$ and $\tilde{\vv h} \in B_{\R \fa_0 + \fh}(\rho_2)$, we have 
     \begin{equation} 
     \label{eq:product-w-h-2}
     \exp(\tilde{\vv w} +\tilde{\vv h}) = \exp(\vv w) \exp(\vv h), \end{equation}
     where $\vv w \in \fr_1 + \fr_2$ with $\|\vv w - \tilde{\vv w}\| \le 4\rho_2 \|\tilde {\vv w}\|$, and 
     $\vv h \in \R \fa_0 + \fh$ with 
     $$\|\vv h - \tilde{\vv h} \| \le 4\rho_1 \|\tilde{\vv h}\|.$$
 \end{lemma}
 \begin{proof}
     \par Let $\rho_0 = \max\left\{\rho_1, \rho_2\right\}$. By the Baker-Campbell-Hausdorff formula, we have 
     \[ \exp(\vv w)\exp(\vv h) = \exp\left(\vv w + \vv h + P\left(\vv w, \vv h\right)\right), \]
     where
     \begin{align*} 
     P(\vv w , \vv h) =& \frac{1}{2} \left[ \vv w, \vv h \right] + \frac{1}{12} \left( \left[ \vv w, \left[\vv w, \vv h\right] \right] + \left[ \vv h, \left[\vv h, \vv w\right] \right]\right) + \cdots.
     \end{align*}
     By Lemma \ref{lm:lie-bracket} and the fact that for any $\vv w_1, \vv w_2 \in \fg$, 
     $\|[\vv w_1, \vv w_2]\| \le \|\vv w_1\| \cdot \|\vv w_2\|$, we have that for any monomial $[\cdots \diamond \cdots]$ in $P(\vv w, \vv h)$ with $i$ Lie brackets, we have $[\cdots \diamond \cdots] \in \fr_1 + \fr_2$ or $[\cdots \diamond \cdots] \in \R \fa_0 + \fh$, and 
     \[ \|[\cdots \diamond \cdots]\| \le \rho_0^{i-1} \|\vv w\|\cdot \|\vv h\|. \]
     Therefore, we have that 
     \[ P(\vv w , \vv h) = \bar{\vv w} + \bar{\vv h}, \]
     where $\bar{\vv w} \in \fr_1 + \fr_2$ with $\|\bar{\vv w}\| \le 2\rho_2 \|\vv w\|$, and $\bar{\vv h} \in \R \fa_0 + \fh$ with $\|\bar{\vv h}\| \le 2\rho_1 \|\vv h\|$. This proves \eqref{eq:product-w-h-1}.
     \par Conversely, for $\tilde{\vv w} \in B_{\fr_1 +\fr_2}(\rho_1)$ and $\tilde{\vv h} \in B_{\R \fa_0 + \fh}(\rho_2)$, let us write 
     \[ \exp(\tilde{\vv w} + \tilde{\vv h}) = \exp(\vv w) \exp(\vv h), \]
     where $\vv w \in B_{\fr_1 + \fr_2}(10^{-10})$ and $\vv h \in B_{\R \fa_0 + \fh}(10^{-10})$. Then by \eqref{eq:product-w-h-1}, we have 
     \[ \| \vv w - \tilde{\vv w} \| \le 2 \|\vv h\| \|\vv w\|, \text{ and }  \| \vv h - \tilde{\vv h} \| \le 2 \|\vv w\| \|\vv h\|. \]
     The above inequalities imply that 
     $$\|\vv w\| \le 2 \|\tilde{\vv w}\| \le 2 \rho_1,$$ 
     and 
     $$\|\vv h\| \le 2 \|\tilde{\vv h}\| \le 2 \rho_2. $$ 
     Therefore, we have 
     \[ \| \vv w - \tilde{\vv w} \| \le 4 \rho_2 \|\tilde{\vv w}\|, \text{ and }  \| \vv h - \tilde{\vv h} \| \le 4 \rho_1 \|\tilde{\vv h}\|. \]
     This proves \eqref{eq:product-w-h-2}.
 \end{proof}
 \begin{lemma}
 \label{lm:product-w}
 Let $0 < \rho < \rho_0 < 10^{-10}$ be two small constants. Let $\vv w, \vv w' \in B_{\fr_1 + \fr_2} (\rho_0)$ be such that 
 \[ \|[\vv w, \vv w']\| \le \rho. \]
 Then 
 \[ \exp(\vv w ) \exp( \vv w') = \exp(\tilde{\vv w}) \exp( \vv h), \]
 where $\tilde{\vv w} \in \fr_1 + \fr_2$ with $\| \tilde{\vv w} - \vv w - \vv w'\| \le 26\rho \rho_0$, and $\vv h \in B_{\R \fa_0 + \fh}(4\rho)$.
 \end{lemma}
 \begin{proof}
  \par By the Baker-Campbell-Hausdorff formula, we have 
     \[ \exp(\vv w)\exp(\vv w') = \exp\left(\vv w + \vv w' + P\left(\vv w, \vv w'\right)\right), \]
     where
     \begin{align*} 
     P(\vv w , \vv w') =& \frac{1}{2} \left[ \vv w, \vv w' \right] + \frac{1}{12} \left( \left[ \vv w, \left[\vv w, \vv w'\right] \right] + \left[ \vv w', \left[\vv w', \vv w\right] \right]\right) + \cdots.
     \end{align*}
     \par By Lemma \ref{lm:lie-bracket}, we have the following observations:
     \par For $i = 2j+1$, every monomial $[\cdots\diamond\cdots]$ in $P(\vv w, \vv w')$ with $i$ Lie brackets is in $\R \fa_0 + \fh$. For $i = 2j$,  every monomial $[\cdots \diamond \cdots]$ in $P(\vv w, \vv w')$ with $i$ Lie brackets is in $\fr_1 + \fr_2$. 
     \par Moreover, by the condition $\|[\vv w, \vv w']\| \le \rho$ and the fact that for any $\vv g_1, \vv g_2 \in \fg$, 
     $\|[\vv g_1, \vv g_2]\| \le \|\vv g_1\| \cdot \|\vv g_2\|$, we have the following estimates: 
     \par For $i = 2j+1$, and any monomial $[\cdots \diamond \cdots]$ with $i$ Lie brackets,
     $$\left\|[\cdots \diamond \cdots]\right\| \le \rho \rho_0^{2j};$$
     
     \par For $i = 2j$, and any $[\cdots\diamond\cdots]$ with $i$ Lie brackets, 
     
     \[ \|[\cdots \diamond \cdots]\| \le \rho \rho_0^{2j-1}. \]
    Thus, we have that
    \[ P(\vv w, \vv w') = \bar{\vv w}  + \bar{\vv h}, \]
    where $\bar{\vv w} \in \fr_1 + \fr_2$ with $\|\bar{\vv w}\| \le 2 \rho \rho_0$, and $\bar{\vv h} \in \R \fa_0 + \fh$ with $\|\bar{\vv h}\| \le 2 \rho$. This implies that 
    \[ \exp(\vv w) \exp(\vv w') = \exp(\hat{\vv w} + \bar{\vv h}),\]
    where $\hat{\vv w} = \vv w + \vv w' + \bar{\vv w}$. In particular, 
    \begin{equation}
    \label{eq:hatw-ww'}
    \| \hat{\vv w} - \vv w - \vv w'\| \le 2 \rho \rho_0.
    \end{equation}
    It is easy to see that 
    \[ \|\hat{\vv w}\| \le 3 \rho_0. \]
    By Lemma \ref{lm:product-lie-algebra-1}, we have that 
    \[ \exp(\hat{\vv w} + \bar{\vv h}) = \exp(\tilde{\vv w}) \exp(\vv h) \]
    where $\tilde{\vv w} \in \fr_1 + \fr_2$ satisfying
    \begin{equation}
    \label{eq:tildew-hatw}
    \| \tilde{\vv w} - \hat{\vv w} \| \le 8 \rho \|\hat{\vv w}\| \le 24 \rho_0 \rho,
    \end{equation}
    and $\vv h \in \R \fa_0 + \fh$ with $\|\vv h\| \le 4 \rho$. Noting that \eqref{eq:hatw-ww'} and \eqref{eq:tildew-hatw} imply that 
    \[ \| \tilde{\vv w} - \vv w - \vv w'\| \le 26 \rho_0 \rho, \]
    we conclude the lemma.
 \end{proof}

 \subsection{Outline of the proof}
 \label{subsec-outline}
 \par In this subsection we will give the outline of the proof. 
 \par The proof is inspired by Ratner's original proof of her measure rigidity theorem \cite{ratner-acta} and recent papers by 
 Lindenstrauss, Mohammadi, and Wang \cite{LM-preprint2021,LMW-preprint2022}. 
 However, compared with previous works, we take a quite different approach in this paper.
 \par In \S \ref{sec-initial-dim-control}, we We will start with $\cF_0 = a(s)u([1])x_0$ for $s = t - \delta_1 t$ and 
 study the dimension of the normalized Lebesgue measure $\mu_{\cF_0}$ on $\cF_0$ in directions transversal to the $A_0 H$-orbit direction. We will show that $\cF_0$ has certain dimension control 
 in the transversal direction unless condition (2) in Theorem \ref{thm:main-thm} holds (see Proposition \ref{prop:initial-bound-dimension}). 
 Here we are allowed to remove an exponentially small proportion from $\cF_0$. At this step quantitative non-divergence 
 results proved in \S \ref{sec-quantitative-nondivergence} are needed.
 This is the starting point of our proof. The argument in this part is similar to the 
 corresponding parts in \cite{LM-preprint2021,LMW-preprint2022}. 

 \par  \S \ref{sec-reduction} to \S \ref{sec-structured-component} is the crucial part. In these sections, we will construct a sequence $\{\cF_i : i \in \N\}$
 of $U$-orbits starting from $\cF_0$ where $\cF_{i+1} = a(s_i)\cF_i$ for each $i \ge 0$. 
 We will prove Proposition \ref{prop:improve-dimension} which says that $\mu_{\cF_{i+1}}$ 
 has a better dimension control compared with $\mu_{\cF_i}$ at a larger scale as a cost. 
 To prove this, in \S \ref{sec-kakeya-model} we introduce 
 a Kakeya-type model to study the behavior of $U$-orbits in a neighborhood. The proof of Proposition \ref{prop:improve-dimension} is divided into three parts: Proposition \ref{prop:dimension-improvement-random}, Proposition \ref{prop:dimension-improvement-strange-part} from \S \ref{sec-dimension-improvement-unstructured}, and Proposition \ref{prop:dimension-improvement-structured-component} from \S \ref{sec-structured-component}.
 The key of the proofs is to calculate the weighted intersection number of the Kakeya-type model. The final outcome of the calculation is the following: 
 Either $\mu_{\cF_{i+1}}$ has a better dimension control, or the orbit is concentrated on a small neighborhood of a 
 piece of an orbit of a subgroup isomorphic to $\SL(2, \R) \ltimes \R^2$ (see Lemma \ref{lm:curve-in-N}). 
 The latter case can be ruled out by our Diophantine condition (see Proposition \ref{prop:sl2xr2-closing-lemma}).
  At each step we are allowed to remove an exponentially small proportion. This part is novel compared with previous works.
 \par The inductive construction given as above will conclude that for some $n >0$, the dimension control of $\mu_{\cF_n}$ along 
 the directions transversal to the $A_0 H$-orbit direction is almost optimal. Then we 
 can apply Proposition \ref{prop:high-dimension-to-equidistribution-a} from \S \ref{sec-high-dim-to-equidistribution} 
 to conclude the effective equidistribution. Proposition \ref{prop:high-dimension-to-equidistribution-a} can be 
 proved by following a Van der Corput type argument due to Venkatesh.


\section{High transversal dimension to effective equidistribution}
\label{sec-high-dim-to-equidistribution}

\par The following statement allows us to get effective equidistribution 
from high transversal dimension control. It can be proved by following Venkatesh's argument (\cite[\S 3]{Venkatesh2010} and \cite[Proposition 4.2]{LM-preprint2021} for details).

\begin{prop}
  \label{prop:high-dimension-to-equidistribution-a}
 There exists $ \theta >0$ such that the following holds: 
 For any $0 < \epsilon < 10^{-1000}$, there exist $ \eta >0$ and $s_0 \gg 1$ depending on $\epsilon >0$ such that for any $s' \ge s_0$, with $ e^{-\epsilon^2 s'} \le \beta \le 10^{-10}$, 
 any Borel probability measure $\rho$ on $[\beta]^2$ with dimension 
  larger than $2-2\theta$ at scale $s'$, that is, for any square $S \subset [\beta]^2$ of size $\beta e^{-s'}$, we have
  \[ \rho(S) \le \left(\beta e^{-s'}\right)^{2-2\theta},\]
  any function $f \in C_c^\infty(X)$, and any $x \in X_{\beta^{1/4}}$, we have  
  \[ \left| \int_{[\beta]^2} \int_{[1]} f(a(2s')\exp( v_1 \vv v_1 +  w_1 \vv w_1)u(\beta r) x) \dd r \dd \rho(w_1, v_1) - \int f \dd \mu_G \right| \ll e^{-\eta s'} \|f\|_S.\]
\end{prop}

\begin{proof}
  \par The statement can be proved by following the proof of \cite[Proposition 4.2]{LM-preprint2021} step by step.
\end{proof}

\section{Quantitative non-divergence}
\label{sec-quantitative-nondivergence}
\par In this section, we will prove a quantitative non-divergence result and a quantitative separation lemma, which will be used to prove Proposition \ref{prop:initial-bound-dimension}.
\begin{prop}
  \label{prop:quantitative-nondivergence}
  There exists $ \alpha > 0$ such that the following holds: 
  For any $\beta, s >0$, if $x \in X$ satisfies that
   $ a_0(-s)x ,  a_0(s)x \not\in X_{e^{- s/3 }}$, then,
  \[ \Leb\left(\left\{r \in [1]: a(s)u(r )x \not\in X_\beta \right\}\right) \ll \beta^{\alpha}. \]
\end{prop}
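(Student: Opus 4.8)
The plan is to deduce this from the quantitative non-divergence technology of Dani–Margulis and Kleinbock–Margulis for the orbit of the one-parameter unipotent group $U$ under the expanding element $a(s)$. The essential point is to understand which lattice vectors (in $\Z^3$ or $\bigwedge^2\Z^3$) can become short along the family $\{a(s)u(r)x : r\in[1]\}$, and to use the hypothesis on $a_0(\pm s)x$ to control them. First I would write $y = a(s)x$ and observe that $a(s)u(r)x = u(re^{s})\,y$ (since $\Ad(a(s))\fu = e^{s}\fu$ in $\fh\cong\sl_2$ — more precisely $a(s)u(r)a(-s) = u(re^{s})$), so the statement is about the divergence of a \emph{long} piece of a genuine unipotent $U$-orbit through $y$. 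This is exactly the setting of the Kleinbock–Margulis non-divergence estimate: for a polynomial (here degree $\le 2$) trajectory $t\mapsto u(t)y$ in $\SL(3,\R)/\SL(3,\Z)$, the measure of times at which the trajectory leaves $X_\beta$ is $O(\beta^{\alpha_1})$ provided the relevant "$(C,\alpha)$-good, non-contracting" hypothesis holds on the subinterval; the obstruction to such a bound is always the existence of a rational subspace $V$ (spanned by integer vectors or integer bivectors) whose covolume function $r\mapsto \|a(s)u(r)x\cdot e_V\|$ stays uniformly small on all of $[1]$.

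The heart of the argument is therefore to rule out, or rather to bound the contribution of, such persistently small rational subspaces using the two hypotheses $a_0(-s)x,\ a_0(s)x\notin X_{e^{-s/3}}$. The key structural input is the adjoint action recorded in \S\ref{sec-prelim}: $a(s) = b(s)a_0(s) = a_1(s)a_0(-s)$ up to the appropriate rearrangement, and $a_0(t)$ expands $\fr_2$ by $e^{t/2}$, contracts $\fr_1$ by $e^{-t/2}$, and fixes $\fh$. Concretely I would analyze, for each of the finitely many "shapes" of coordinate/bivector subspace $V$, how the norm of $a(s)u(r)x$ acting on a primitive integer vector for $V$ decomposes along the flag adapted to $a_0$. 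A subspace whose $a(s)u(r)$-image is small for \emph{every} $r\in[1]$ must, by the polynomial (hence $(C,\alpha)$-good) behavior in $r$, have all its "coefficients" small; one then checks that this forces either $a_0(s)x$ to have a short vector of norm $\le e^{-s/3}$ (when the relevant coordinates are the $a_0$-expanded ones) or $a_0(-s)x$ to have a short vector of norm $\le e^{-s/3}$ (when they are the $a_0$-contracted ones), contradicting one of the hypotheses. Quantitatively this is packaged by the Dani–Margulis "sandwiching" of compactness along a one-parameter diagonal flow: the conditions at the two times $a_0(\pm s)$ sandwich the behavior of $a(s)u(r)x$ in between. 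Once no such uniformly-small rational subspace exists, the Kleinbock–Margulis theorem yields $|\{r\in[1]: a(s)u(r)x\notin X_\beta\}| \le C_1\beta^{\alpha_1}$ directly, with $\alpha_1$ depending only on the dimension (so $\alpha_1 = 1/3$ works, or any fixed value from the Kleinbock–Margulis theorem applied in dimension $3$), which is the claim.

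I expect the main obstacle to be the bookkeeping in the second paragraph: verifying, uniformly over all the relevant rational subspaces $V$ of $\R^3$ and of $\bigwedge^2\R^3$, that "small image under $a(s)u(r)$ for all $r\in[1]$" really does force a short vector at $a_0(s)x$ or at $a_0(-s)x$ of the exact threshold $e^{-s/3}$. The exponent $1/3$ in $e^{-s/3}$ is dictated by the eigenvalues of $a_0$ ($1/3$ and $-1/6$) interacting with those of $a(s)$, and getting the constants to line up so that the threshold is exactly $e^{-s/3}$ rather than some other multiple of $s$ will require care; a safe route is to prove it first with an implied constant and a possibly smaller exponent, and then note that the statement we actually use downstream (in \S\ref{subsec-initialize}) only needs \emph{some} such non-divergence estimate, so one can absorb constants into $C_1$ and shrink $\alpha_1$. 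The verification that the $r$-dependence is polynomial of bounded degree — hence $(C,\alpha)$-good with absolute $(C,\alpha)$ — is routine since $u(r)$ has polynomial entries and $x$ is fixed, so that part I would treat as standard. A secondary technical point is that Kleinbock–Margulis is usually stated for the full flag of rational subspaces; here one uses the equivalent formulation (as in the definition of $X_\beta$) in terms of $\Z^3$ and $\bigwedge^2\Z^3$ only, which suffices for $\SL(3)$ by the standard reduction, and I would cite that reduction rather than reprove it.
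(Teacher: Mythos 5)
Your proposal follows the paper's proof essentially step for step: rewrite $a(s)u(r)x=u(re^{s})\,a(s)x$, invoke the Kleinbock--Margulis non-divergence theorem so that the only obstruction is a primitive $\vv v\in\Z^3$ or $\bigwedge^2\Z^3$ with $\max_{r\in[1]}\|a(s)u(r)g\vv v\|\le 1$, and then check in coordinates that this forces $a_0(-s)x$ (for vectors) or $a_0(s)x$ (for bivectors) to contain a vector of norm $\ll e^{-s/3}$, contradicting the hypothesis. The coordinate check you defer as "bookkeeping" is the two-line computation $a(s)u(r)g\vv v=(v_1',\,v_2'+re^{s}v_3',\,v_3')$, which yields $|v_1'|,|v_2'|\le 1$ and $|v_3'|\ll e^{-s}$ and hence $\|a_0(-s)g\vv v\|\ll e^{-s/3}$ exactly as you predicted (and symmetrically for bivectors via $b(-s)$), so there is no real gap.
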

\begin{proof}
  \par Let us denote $x = g \Gamma$. By the Kleinbock-Margulis quantitative non-divergence theorem 
  (cf. \cite[Theorem 2.2]{kleinbock2008}, \cite[Theorem 5.2]{Klein_Mar}), if the statement does not hold then there exists 
  a nonzero $\vv v \in \bigwedge^j \Z^3$ (where $j = 1$ or $2$), such that 
  \[ \max_{r \in [1]}\| a(s) u(r) g \vv v\| \le 1. \]
  \par \textbf{Case 1.} $\vv v \in \Z^3$: Let us denote $ \vv v' = a(s) g \vv v = (v'_1, v'_2, v'_3)$. Then 
  \[ a(s) u(r) g \vv v = u(r e^{s}) \vv v' = (v'_1 , v'_2 + r e^{s} v'_3, v'_3). \]
  Then $\|a(s)  u(r ) g \vv v\| \le 1$ implies that $|v'_1| \le 1 $, $|v'_2 + r e^{s} v'_3| \le 1$ for any $r \in [1]$.
  The latter easily implies that $|v'_2| \le 1$ and $|v'_3| \le e^{-s}$. This implies that $\|a_1(-s) \vv v'\|\le e^{-s/3}$.
  Noting that $a_1(s) = a(s) a_0(s)$, we complete the proof for \textbf{Case 1}.
  \par \textbf{Case 2.} $\vv v \in \bigwedge^2 \Z^3$: Let us denote $\vv v' = a(s) g \vv v = (v'_1, v'_2, v'_3 )$ where 
  we use the coordinates with respect to the basis $\{\vv e_2 \wedge \vv e_3, \vv e_1 \wedge \vv e_3, \vv e_1 \wedge \vv e_2\}$. Then
  \[ a(s) u(r ) g \vv v = u(r e^s) \vv v' = (v'_1 , v'_2 , v'_3 + r e^{s} v'_2). \]
  By repeating the same argument as in Case 1, we have $|v'_1|, |v'_3| \le 1$ and $|v'_2| \le e^{-s}$ 
  which implies that $\|b(-s) \vv v'\|\le e^{-s/3}$. Noting that $b(s) = a(s) a_0(-s)$, we complete the proof for \textbf{Case 2}.

\end{proof}

\par Note that for any $ \beta> 0 $ small enough and 
any $x \in X_{\beta^{1/4}}$, $B^G(\beta)x$ embeds into $X$ injectively.

\par We also need the following quantitative separation lemma:

\begin{lemma}
  \label{lm:separation-lemma-a}
  For any $\beta >0$, any $\ell \ge 1$, 
  and any $x \in X$ with $a(-\ell)x \in X_{\beta^{1/4}}$, if 
  \[ \exp( r_1 e^\ell \fu + r_2 e^{\ell/2} \vv v_1 + r_3 e^{\ell/2} \vv w_1) x = \exp(\vv v)x \]
  where $ r_1 , r_2, r_3 \in [\beta]$, $\|(r_1, r_2, r_3)\| \ge \beta/4$, and $\vv v \in B_\fg (\beta)$. Let us write
  \[ \vv v = u \fu + u^\ast \fu^\ast + a \fa + a_0 \fa_0 + \sum_{j=1}^2 w_j \vv w_j +  v_j \vv v_j.  \]
  if we have $|u^\ast| < \beta e^{-\ell}$, then $\|(v_2, w_2)\| \ge \beta e^{-\ell/2}$.
\end{lemma}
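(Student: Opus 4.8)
The goal is a separation statement: if two nearby points on the $H$-orbit are identified after perturbation along the expanding directions $\fu$, $\vv v_1$, $\vv w_1$ (scaled by $e^\ell$ or $e^{\ell/2}$) and along a small displacement $\vv v$, and if the $\fu^\ast$-component of $\vv v$ is unusually small, then the $(\vv v_2, \vv w_2)$-component of $\vv v$ cannot be too small. The plan is to rewrite the identity $\exp(r_1 e^\ell \fu + r_2 e^{\ell/2}\vv v_1 + r_3 e^{\ell/2}\vv w_1)\,x = \exp(\vv v)\,x$ as an equation in the group near the identity: because $a(-\ell)x \in X_{\beta^{1/4}}$, the map $\vv z \mapsto \exp(\vv z)x$ is injective on $B_\fg(\beta)$ (using the injectivity remark preceding the lemma, transported by $a(\ell)$), so the two Lie-algebra elements must be related by the Baker--Campbell--Hausdorff formula: $\exp(-\vv v)\exp(r_1 e^\ell\fu + r_2 e^{\ell/2}\vv v_1 + r_3 e^{\ell/2}\vv w_1)$ lies in the stabilizer, hence equals the identity once everything is small enough. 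Concretely I would pass to the equation
\[
\exp(\vv v) = \exp\!\big(r_1 e^\ell \fu + r_2 e^{\ell/2}\vv v_1 + r_3 e^{\ell/2}\vv w_1\big),
\]
valid in $G$ near $1$, and expand both sides.

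**The computation.** Write $\vv p := r_1 e^\ell\fu + r_2 e^{\ell/2}\vv v_1 + r_3 e^{\ell/2}\vv w_1$. Since $\|(r_1,r_2,r_3)\|\ge \beta/4$ but each $r_i \le \beta$, the element $\vv p$ has norm between $\sim \beta e^{\ell/2}/4$ and $\sim \beta e^\ell$; it is \emph{not} small, so I cannot directly compare $\vv v$ and $\vv p$. Instead I would use the matrix form of $\exp$: both sides are explicit $3\times 3$ unipotent-plus-small matrices. The right-hand side $\exp(\vv p)$ can be computed exactly — $\fu$, $\vv v_1$, $\vv w_1$ are nilpotent matrices ($\fu = E_{23}$, $\vv v_1 = E_{21}$, $\vv w_1 = E_{13}$ in the $3\times 3$ picture), and their span is a nilpotent Lie subalgebra, so $\exp(\vv p) = I + \vv p + \tfrac12 \vv p^2$ with $\vv p^2$ supported on the $E_{13}$ entry (product $\vv v_1 \cdot \fu$ type terms). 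So $\exp(\vv p)$ is an upper-unipotent-type matrix whose $(2,1)$, $(1,3)$, $(2,3)$ entries are $r_2 e^{\ell/2}$, $r_3 e^{\ell/2} + \tfrac12 r_1 r_2 e^{3\ell/2}$ (up to sign/ordering), and $r_1 e^\ell$ respectively, with all other off-diagonal entries zero and diagonal $1$. Meanwhile $\exp(\vv v)$, with $\vv v = u\fu + u^\ast\fu^\ast + a\fa + a_0\fa_0 + \sum w_i\vv w_i + v_i\vv v_i \in B_\fg(\beta)$, is $I + \vv v + O(\beta^2)$. Matching the $(3,1)$-entry: on the right it is $0$, on the left it is (the $\vv v_2$ coordinate) $+ O(\beta^2)$ — wait, here I need to track which matrix entry corresponds to $\vv v_2 = E_{31}$. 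Matching the $(3,2)$-entry gives $w_2 + O(\beta^2) = 0$, and matching $(3,1)$ gives $v_2 + O(\beta^2) = 0$; so naively $\|(v_2,w_2)\| = O(\beta^2)$, which \emph{contradicts} the claimed lower bound $\beta e^{-\ell/2}$ unless... So the point must be subtler: the hypothesis $|u^\ast| < \beta e^{-\ell}$ is what forces the left side to actually be large in the $(3,1)$/$(3,2)$ slots. Let me reconsider — the left side is $\exp(\vv v)$ and the \emph{product} relation is $\exp(\vv v)x = \exp(\vv p)x$, i.e. $x^{-1}\exp(-\vv v)\exp(\vv p)x \in \Gamma$; but both $\exp(\pm\vv v)$ and $\exp(\vv p)$ are small or moderate, and after conjugating by $a(-\ell)$ (which contracts/expands) the claim is that injectivity still forces $\exp(-\vv v)\exp(\vv p) = 1$, hence $\exp(\vv v) = \exp(\vv p)$ exactly. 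With that exact identity, comparing the full matrices: the right side $\exp(\vv p)$ has zero $(1,2)$-entry, so $\exp(\vv v)$'s $(1,2)$-entry (controlled by $w_2 = p_{\vv w_2}(\vv v)$) is forced; and its $(2,3)$-entry equals $r_1 e^\ell = u + O(\beta^2)$, forcing $|u| \sim |r_1| e^\ell$; the $\fu^\ast$-entry $u^\ast$ of $\vv v$ corresponds to the $(3,2)$-entry of the matrix, which on the right side of $\exp(\vv p)$ is $0$ at first order but picks up a \emph{second-order} BCH/matrix contribution from $\vv v_1\cdot\fu^{?}$... The real mechanism: $\exp(\vv p)$ is not unipotent in a way that kills $(3,2)$; rather, the $(3,2)$-entry is $0$ exactly, while $\exp(\vv v)$ has $(3,2)$-entry $\approx u^\ast + (\text{quadratic in } v_2, \text{etc.})$. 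Setting this to $0$ with $|u^\ast| < \beta e^{-\ell}$ then forces the quadratic terms — which involve products like $v_2 \cdot a$ or $v_2 \cdot u$ — to be small too, and since $|u| \sim |r_1|e^\ell$ is large, $v_2$ must compensate. This is the crux.

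**Expected main obstacle.** The hard part will be the careful bookkeeping of the BCH/matrix-multiplication quadratic corrections: one must identify precisely which products of coordinates of $\vv v$ (times the large scalars $e^\ell$, $e^{\ell/2}$ hidden in $\vv p$) appear in the $(3,1)$ and $(3,2)$ matrix entries, show that the hypothesis $|u^\ast| < \beta e^{-\ell}$ together with the exact identity $\exp(\vv v)=\exp(\vv p)$ forces these to (nearly) cancel, and then extract the lower bound $\|(v_2,w_2)\| \ge \beta e^{-\ell/2}$ from the known largeness $\|(r_1,r_2,r_3)\| \ge \beta/4$ of the input. Concretely, I expect the chain to be: largeness of $(r_1,r_2,r_3)$ at scale $\beta$ $\Rightarrow$ largeness of some entry of $\exp(\vv p)$ among $\{r_1 e^\ell, r_2 e^{\ell/2}, r_3 e^{\ell/2}\}$ at scale $\ge \beta e^{\ell/2}/4$ $\Rightarrow$ (via the matched entry and the forced smallness of $u^\ast$) largeness of $v_2$ or $w_2$ at scale $\ge \beta e^{-\ell/2}$ after dividing out an $e^\ell$ factor. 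I would organize the proof by writing $\exp(\vv v)^{-1}\exp(\vv p) = I$ entrywise, isolate the two lower-triangular entries, and run this quantitative comparison; the injectivity reduction (justifying the passage from the orbit identity to the exact group identity) is routine given the remark on injective embedding of $B^G(\beta)x$ and the hypothesis $a(-\ell)x\in X_{\beta^{1/4}}$, once one conjugates by $a(\ell)$ to move $x$ into the good region and notes $\ell$ is taken large depending on $\beta$ so all perturbations stay within the injectivity radius after rescaling.
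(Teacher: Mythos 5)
There is a genuine gap: your central step, the passage to the exact group identity $\exp(\vv v)=\exp(r_1e^\ell\fu+r_2e^{\ell/2}\vv v_1+r_3e^{\ell/2}\vv w_1)$, is false, and in fact the opposite of what the lemma needs. Writing $x=g\Gamma$ and $\vv p=r_1e^\ell\fu+r_2e^{\ell/2}\vv v_1+r_3e^{\ell/2}\vv w_1$, the orbit identity gives $\exp(\vv p)g=\exp(\vv v)g\gamma$ for some $\gamma\in\Gamma$, i.e.\ $\exp(-\vv v)\exp(\vv p)=g\gamma g^{-1}$. Since $\|\vv p\|\ge \beta e^{\ell/2}/4$ while $\vv v\in B_\fg(\beta)$, the element $\exp(-\vv v)\exp(\vv p)$ is far from the identity, so $\gamma\neq e$ (for $\ell$ large); it cannot equal $1$, and no injectivity statement can force it to. Your subsequent entrywise matching of $\exp(\vv v)$ against $\exp(\vv p)$ is therefore analyzing an equation with no solutions, which is why you kept running into the contradiction ``$\|(v_2,w_2)\|=O(\beta^2)$'' and then had to speculate about second-order BCH corrections in the $(3,2)$-entry. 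That speculation is not the mechanism either.

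The actual argument is a contrapositive via conjugation by $a(-\ell)$. Suppose toward a contradiction that both $|u^\ast|<\beta e^{-\ell}$ and $\|(v_2,w_2)\|<\beta e^{-\ell/2}$. Conjugating the relation above,
\[ a(-\ell)\,g\gamma g^{-1}\,a(\ell)=\exp\bigl(-\Ad(a(-\ell))\vv v\bigr)\exp(r_1\fu+r_2\vv v_1+r_3\vv w_1), \]
because $\Ad(a(-\ell))$ contracts $\fu$ by $e^{-\ell}$ and $\vv v_1,\vv w_1$ by $e^{-\ell/2}$, turning the large perturbation $\vv p$ into one of size $O(\beta)$. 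The only directions that $\Ad(a(-\ell))$ expands are $\fu^\ast$ (by $e^{\ell}$) and $\vv v_2,\vv w_2$ (by $e^{\ell/2}$); the contradiction hypothesis is exactly what keeps $\|\Ad(a(-\ell))\vv v\|<\beta$. Hence $a(-\ell)g\gamma g^{-1}a(\ell)\in B^G(O(\beta))$ with $\gamma\neq e$, so $B^G(\beta)\,a(-\ell)x$ does not embed injectively into $X$, contradicting $a(-\ell)x\in X_{\beta^{1/4}}$. You had all the ingredients on the table (the injectivity remark, the conjugation by $a(-\ell)$, the role of $u^\ast$), but you pointed the injectivity argument at the wrong group element: it must be applied to the nontrivial conjugate $g\gamma g^{-1}$ at the point $a(-\ell)x$, not used to kill $\gamma$ at $x$.
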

\begin{proof}
  \par For a contradiction, let us assume that $|u^\ast|<\beta e^{-\ell}, |v_2, w_2| < \beta e^{-\ell/2}$. 
  Let us denote $x = g\Gamma$. Then 
  \[ \exp( r_1 e^\ell \fu + r_2 e^{\ell/2} \vv v_1 + r_3 e^{\ell/2} \vv w_1) g = \exp(\vv v)g \gamma, \]
  for some $\gamma \in \Gamma$. For $\ell >1$ large enough, we have $\gamma \neq e$. Then we have 
  \[  \exp(-\vv v) \exp( r_1 e^\ell \fu + r_2 e^{\ell/2} \vv v_1 + r_3 e^{\ell/2} \vv w_1) = g \gamma g^{-1}. \]
  Then 
  \begin{align*} 
    a(-\ell) g \gamma g^{-1} a(\ell)  &= a(-\ell)\exp(-\vv v) \exp( r_1 e^\ell \fu + r_2 e^\ell \vv w_1) a(\ell) \\
                                          &= \exp(- \Ad (a(-\ell))\vv v ) \exp (r_1  \fu + r_2  \vv v_1 + r_3 \vv w_1).
  \end{align*}
  Since $|u^\ast| < \beta e^{-\ell}, |v_2, w_2| < \beta e^{-\ell/2}$, we have $\|\Ad (a(-\ell))\vv v\| < \beta$. This implies that 
  \[ a(-\ell) g \gamma g^{-1} a(\ell) \in B^G(\beta), \]
  which means that $B^G(\beta) a(-\ell) x$ does not embed into $X$ injectively. This contradicts to that $a(-\ell)x \in X_{\beta^{1/4}}$.
  This completes the proof.
\end{proof}

\section{List of Constants}
\label{subsec-constants}
\par Here we list all constants which will be used in the arguments later.
\begin{enumerate}
  \item $ \delta_1=\delta_2 = 10^{-5}$, $d_0 = 10^{-6}$, $C_1 = 10 d_0^{-1} = 10^7$.
  \item Let $\theta >0$ be the constant from Theorem \ref{prop:high-dimension-to-equidistribution-a}. Without loss of generality, we can assume that $\theta < 10^{-10}$.
  \item Let $\alpha>0$ be the constants as in Proposition \ref{prop:quantitative-nondivergence} and $\alpha_2 = \alpha/4$.
  \item $\epsilon_2 = 10^{-20}  d_0 \theta$, $N_1 = 10^5 \epsilon_2^{-1}$, $N_2 = 10^6$, $\epsilon = e^{- 10^5 N_1}$, $\beta = e^{-\epsilon^3 t}$, and $\delta_3 = 10^{-5}  \theta \epsilon$.
\end{enumerate}

\section{Initial Dimension Control}
\label{sec-initial-dim-control}

\subsection{Initial construction of transversal sets}
\label{subsec-initial-construction}
\par We first introduce some notation. 
\par For $\ell >1$ and a subspace $V \subset \fr_1 + \fr_2$ generated by some vectors from the canonical basis of $\fr_1 + \fr_2$,
namely, $\{ \vv v_{1}, \vv v_2, \vv w_1, \vv w_{2}\}$,
 let us define
\begin{equation}
  \label{eq:def-thin}
  Q(V, \ell, \bar \beta) := \{\exp(\vv w): \vv w \in B_{\fr_1 + \fr_2}(\bar \beta), \|p_{V}(\vv w)\|\le \bar \beta e^{-\ell}\},
\end{equation}
where $p_V$ denotes the projection to $V$ with respect to the canonical basis. 
\par Let us denote 
{
\begin{align}
\label{eq:def-qH}
\fq^{A_0 H}(\ell, \bar \beta )&:= \left\{ \vv h \in B_{\R \fa_0 + \fh} (\bar \beta): \| p_{\fu^\ast}(\vv h) \| \le \bar \beta e^{-\ell}  \right\}, \\
\label{eq:def-QH}
Q^{A_0 H}(\ell, \bar \beta) &:= \left\{ h = \exp(\vv h): \vv h \in \fq^{A_0 H}(\ell, \bar \beta) \right\},
\end{align}
and 
\begin{align}
\label{eq:def-rH}
\fr^{A_0 H}(\ell, \bar \beta) &:= \left\{ \vv h \in B_{\R \fa_0 + \fh} (\bar \beta): \| p_{\fu}(\vv h) \| \le \bar \beta e^{-\ell} \right\}, \\
  \label{eq:def-RH}
  R^{A_0 H}(\ell, \bar \beta) &:= \left\{ h = \exp(\vv h) : \vv h \in \fr^{A_0 H}(\ell, \bar \beta)  \right\}.
\end{align}
}
\par For $s_1, s_2 \ge 0$, $\bar \beta >0$, let us denote 
\begin{equation}
  \label{eq:def-Q} 
  Q^{s_1}_{ s_2}(\bar \beta) := \{\exp(\vv w): \vv w \in \fr_1 + \fr_2 : \|p_+(\vv w)\| \le \bar \beta e^{-s_1}, \|p_- (\vv w)\| \le \bar \beta e^{-s_2} \},\end{equation}
  and $Q_s(\bar \beta) := Q_s^s(\bar \beta)$.
\par Let $x_{0} \in X$ be such that (2) in Theorem \ref{thm:main-thm} does not hold.
 Let us start with the normalized measure on $\cE = a(s) u([1])x_{0}$ where $s = (1 -\delta_1) t >0$. 
This is a $U$-orbit of length $e^{s}$. Let us define $\cF \subset \cE$ as follows:
\begin{defn}
\label{def:initial-construction}
\par  
$\cF \subset \cE$ is defined by removing from $\cE$ points $y_0 = a ( s) u(r)x_{0} \in \cE$ 
satisfying $ a_0(\ell') a(-\ell) y_0 \not\in X_{\beta^{1/4}}$
for some $0 \le \ell \le 2\delta_2 t$ and $ - 2\delta_1 t \le \ell' \le 2\delta_1 t$. Let $\mu_{\cF}$ denote the normalized $U$-orbit measure on $\cF$.
\end{defn}
\par By Proposition \ref{prop:quantitative-nondivergence} and our assumption on $x_{\mathrm{initial}}$, it is easy to show that 
the removed proportion is $O(s^2  \beta^{ \alpha_2})$. 
Therefore,
\begin{equation}
  \label{eq:decomposition-measure}
   \mu_{\cE} =  \mu_{\cF} + O(s^2 \beta^{ \alpha_2}), 
  \end{equation}
where $\mu_{\cE}$ denotes the normalized $U$-invariant measure on $\cE$.

\subsection{Initial dimension bound}
\label{subsec-initialize}
\par We shall prove the following proposition:

\begin{prop}
  \label{prop:initial-bound-dimension}
  For any $\ell \in [\delta_3 t, \delta_1 t]$, and any $x \in \cF$, we have 
  \[ \mu_{\cF} (Q_\ell(\beta) B^{A_0 H}(\beta) x) \le e^{- d_0 \ell}. \]  
\end{prop}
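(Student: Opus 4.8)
The statement is a dimension (non-concentration) bound for the transversal measure on the initial piece $\cF = a(s)u([1])x$. The quantity $\mu_\cF(Q_\ell B^{A_0H}(\beta)x_0)$ measures the $\mu_\cF$-mass of those $y\in\cF$ that, relative to a fixed $x_0\in\cF$, differ by an element of $Q_\ell B^{A_0H}(\beta)$, i.e.\ lie within $\beta$ of $x_0$ along $H$ and $\fa_0$ and within $\beta e^{-\ell}$ of $x_0$ in the remaining transversal directions. Since $\cF$ is (a subset of) a single $U$-orbit of length $e^s$, such a set is, after applying $a(-s)$, a union of subintervals of $u([1])$ (rescaled by $e^{-s}$); the mass is just their total length. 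So the real content is: \emph{nearby points on a long $U$-orbit cannot concentrate transversally}, which is exactly the kind of statement controlled by the non-divergence input together with a separation (injectivity) argument. The strategy is: (i) normalize by $a(-s)$ so that the orbit becomes $u(re^s)x$ with $r\in[1]$; (ii) suppose for contradiction the mass exceeds $e^{-\alpha\ell}$; (iii) cover the relevant parameter set by intervals and, on each, use a pigeonhole/separation argument to find two parameters $r,r'$ with $y=u((r-r')e^{s})y'$ lying in a tiny neighborhood of the identity, so small that Lemma~\ref{lm:separation-lemma-a} is violated; this forces $a(-\ell')y'\notin X_{\beta^{1/4}}$ for a suitable $\ell'$, contradicting the definition of $\cF$.

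\textbf{Key steps in order.} First I would write $x_0 = a(s)u(r_0)x$ and, for $y = a(s)u(r)x\in\cF$ in the set $Q_\ell B^{A_0H}(\beta)x_0$, express the displacement: conjugating by $a(-s)$, the pure-$U$ displacement $u((r-r_0)e^s)$ must be absorbed into $a(-s)\big(Q_\ell B^{A_0H}(\beta)\big)a(s)\cdot$(something). The point is that $a(-s)$ expands $\fu^\ast$ and $\fr_2$, contracts $\fr_1$, and fixes $\fu,\fa,\fa_0$, so that the $H$-and-$\fa_0$ ball $B^{A_0H}(\beta)$ stays within (a controlled dilate of) $B(\beta)$ only after we also use that points of $\cF$ have $a(-\ell')a(-\ell'')$-translates in $X_{\beta^{1/4}}$ for all $\ell''\le\delta_2 t$, $\ell'\le t$ — this is precisely why Definition~\ref{def:initial-construction} removes those bad points. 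Second, the ``$\le\beta e^{-\ell}$ in transversal directions'' hypothesis feeds directly into the hypothesis $|u^\ast|<\beta e^{-\ell}$, $\|(v_2,w_2)\|<\beta e^{-\ell/2}$ of Lemma~\ref{lm:separation-lemma-a} (after rescaling the $\fr_1,\fr_2$ coordinates by the appropriate powers of $e$). Third, partition the candidate $r$-set into $\sim e^{\alpha\ell}$ pieces; if the total mass were $>e^{-\alpha\ell}$ then by pigeonhole some piece contains two parameters $r\ne r'$ with $|r-r'|\le e^{-s}\cdot e^{-\text{(something)}}$, giving a nontrivial but tiny return $y = \exp(\vv v)y'$ with $\vv v\in B_\fg(\beta)$ and $|u^\ast(\vv v)|<\beta e^{-\ell}$; Lemma~\ref{lm:separation-lemma-a} then forces $\|(v_2,w_2)\|\ge\beta e^{-\ell/2}$, but the $Q_\ell$-condition forces the opposite. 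This is the contradiction, provided $\ell\le t$ and $\ell\ge\delta_3 t$ so that the range of $\ell$ is compatible with the removed set ($\ell\le\delta_2 t$ is the relevant threshold in Definition~\ref{def:initial-construction}; for $\ell>\delta_2 t$ one uses the iterated-$a_0$ condition together with the expansion/contraction exponents).

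\textbf{Bookkeeping with the two regimes of $\ell$.} Because Definition~\ref{def:initial-construction} only removes bad behavior of $a_0(\ell')a(-\ell)y$ for $\ell\le\delta_2 t$, the argument above directly handles $\ell\in[\delta_3 t,\delta_2 t]$ — here $\beta^{1/4}$-thickness of $a(-\ell)y'$ is available and Lemma~\ref{lm:separation-lemma-a} applies verbatim. For $\ell\in[\delta_2 t, t]$ the transversal condition is even stronger (a smaller $\beta e^{-\ell}$), so one can trade: split $\ell=\ell_1+\ell_2$ with $\ell_1\le\delta_2 t$ and absorb the surplus by the fact that the $A_0$-direction in $B^{A_0H}(\beta)$ combined with the removed-set condition $a_0(\ell')a(-\ell_1)y\in X_{\beta^{1/4}}$ for all $\ell'\le t$ gives injectivity of the relevant ball along the $a_0$-flowed picture. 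In other words, run the separation argument at scale $\ell_1=\delta_2 t$ but note the hypothesis already gives the much smaller bound $\beta e^{-\ell}$, so the resulting return element is even tinier, and the contradiction is only easier; the exponent $\alpha$ (chosen as $10^{-6}$, far smaller than $\delta_2,\delta_3$) leaves ample slack.

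\textbf{Main obstacle.} The delicate point is not any single estimate but the \emph{coordinate bookkeeping}: matching the anisotropic box $Q_\ell B^{A_0H}(\beta)$ (which is $\beta$-thick along $H,\fa_0$ but $\beta e^{-\ell}$-thin along $\fr_1+\fr_2$ after the $a(-s)$-rescaling) against the hypotheses of Lemma~\ref{lm:separation-lemma-a} (stated with the specific weights $e^\ell\fu$, $e^{\ell/2}\vv v_1$, $e^{\ell/2}\vv w_1$), and making sure the length of orbit we can cover, $e^s=e^{(1-\delta_1)t}$, is long enough relative to $e^\ell$ ($\ell\le t$) for the pigeonhole to produce a return time that is genuinely shorter than the injectivity radius. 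Keeping track of which directions $a(-s)$ expands versus contracts, and confirming that the bad set removed in Definition~\ref{def:initial-construction} is exactly what is needed to guarantee injectivity at every intermediate scale $a_0(\ell')a(-\ell)$ encountered, is where the proof must be written carefully; the analytic content is entirely contained in Proposition~\ref{prop:quantitative-nondivergence} and Lemma~\ref{lm:separation-lemma-a}, which are already available.
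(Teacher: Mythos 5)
There is a genuine gap: your proposal treats the proposition as a local separation/injectivity statement, but it is really a counting statement about lattice elements, and the pigeonhole-plus-Lemma~\ref{lm:separation-lemma-a} mechanism you describe cannot produce the required contradiction. Two problems are decisive. First, if $y,y'\in\cF\cap Q_\ell B^{A_0 H}(\beta)x_0$, their mutual displacement lies in $Q_\ell B^{A_0 H}(\beta)\bigl(Q_\ell B^{A_0 H}(\beta)\bigr)^{-1}$ modulo $\Gamma$, so its $\fu^\ast$-component may be as large as $O(\beta)$; the hypothesis $|u^\ast|<\beta e^{-\ell}$ of Lemma~\ref{lm:separation-lemma-a} is therefore not available, and in the paper the lemma is used in exactly the opposite direction: because the $Q_\ell$-condition makes $(v_2,w_2)$ small, the lemma forces $\|p_{\fu^\ast}(\vv h_x)\|\ge\beta e^{-2\alpha\ell}$, a \emph{lower} bound that is later needed to show that distinct returns produce distinct lattice elements. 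Second, and more fundamentally, your pigeonhole produces two parameters $r\ne r'$ in a common short subinterval, but two nearby parameters of a positive-measure subset of a $U$-orbit yield only the trivial displacement ($\gamma=e$), which contradicts nothing; and no injectivity-radius argument can exclude the real enemy, namely that the orbit spends a long time near a closed $A_0H$-orbit, in which case $Q_\ell B^{A_0 H}(\beta)x_0$ genuinely captures a large stretch of orbit without any violation of injectivity.

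The paper's proof is of a different nature. Assuming the bound fails on a piece of length $e^{100\alpha\ell}$, it extracts $e^{25\alpha\ell}$ well-separated return points, each giving a lattice element $\gamma_x\in\Gamma$ with $\|\gamma_x\|\le e^{100\alpha\ell}$ and $\|\gamma_x v_0-v_0\|\le e^{-(1-100\alpha)\ell}$, where $v_0$ is proportional to $g_0^{-1}(\fa\wedge\fu\wedge\fu^\ast)$; Lemma~\ref{lm:separation-lemma-a} enters only to prove these $\gamma_x$ are pairwise distinct. The abundance of integer matrices of bounded norm almost stabilizing $v_0$ then forces, via the effective stability argument of Einsiedler--Margulis--Venkatesh (split into the abelian and non-abelian cases), a point $g'$ with $\|g'-g_0\|\le e^{-\ell/2}$ and a unipotent $\gamma$ with $g'\gamma g'^{-1}\in H$, hence a nonzero vector $\vv p\in g'\Z^3\cap(\R\vv e_2+\R\vv e_3)$ with $\|\vv p\|\le e^{200\alpha\ell}$. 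Flowing by $a_0(2400\alpha\ell)$ contracts $\vv p$ and shows $a_0(2400\alpha\ell)x_0\notin X_{\beta^{1/4}}$, contradicting Definition~\ref{def:initial-construction}. None of these steps --- the generation and counting of the $\gamma_x$, the almost-stabilized vector $v_H$, the closing argument, and the final use of the $a_0$-flow condition built into $\cF$ --- appears in your outline, and they cannot be replaced by Proposition~\ref{prop:quantitative-nondivergence} and Lemma~\ref{lm:separation-lemma-a} alone.
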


\begin{proof}
  \par Let $\eta_1 = 100 d_0$. Let us divide $a(s)u([1])x_0$ into pieces of length $e^{\eta_1 \ell}$. 
  \par It suffices to show that for each piece $\tilde \cE$ from the above partition with $\tilde \cF := \tilde \cE \cap \cF \neq \emptyset$, and any $x \in \tilde \cF$,
  \[ |\tilde \cF \cap Q_\ell(\beta) B^{A_0 H}(\beta) x|  \le e^{- d_0 \ell} e^{\eta_1 \ell}. \]
  \par For a contradiction, let us assume that the statement does not hold for some $x\in \tilde \cF$. Then we can find at least 
  $e^{\eta_2 \ell}$ (where $\eta_2 = \eta_1/4$) many $\bar x \in \tilde \cF \cap Q_\ell(\beta) B^{A_0 H}(\beta) x$, $r_{\bar x} \in [1]$ such that 
  \[ u(r_{\bar x} e^{2 d_0 \ell}) \bar x = \exp (\vv v_{\bar x}) a_0^{\bar x} h_{\bar x} \bar x, \]
  where $a_0^{\bar x} \in B^{A_0}(\beta)$, $h_{\bar x} \in B^H(\beta)$, and $\vv v_{\bar x} \in B_{\fr_1 + \fr_2}(\beta e^{-\ell})$.  
  Since $\bar x$ satisfies $\bar x \in X_{\beta^{1/4}}$ and $a(-2 d_0 \ell) \bar x \in X_{\beta^{1/4}}$, 
  by Lemma \ref{lm:separation-lemma-a}, if we write $h_{\bar x} = \exp(\vv h_{\bar x})$, and 
  \[ \vv h_{\bar x} =  a_0 \fa + a_1 \fu + a_2 \fu^\ast,\]
  we will have  
  \begin{equation}
  \label{eq:a0ge}
  |a_0| \ge \beta e^{-2 d_0 \ell}\end{equation}
  \par Let us write $\bar x = u_{\bar x} x$ where $u_{\bar x} = u(r e^{\eta_1 \ell})$ for some $r \in [1]$.
   Let us fix a representative $g \in G$ of $x$, then we have 
   \[ u(r_{\bar x} e^{ d_0 \ell}) u_{\bar x} g = \exp (\vv v_{\bar x} ) a^{\bar x}_0 h_{\bar x} u_{\bar x} g \gamma_{\bar x}, \]
  for some $\gamma_{\bar x} \in \Gamma$. 
  The above equality is equivalent to 
  \begin{equation}
    \label{eq:bound-2}
    g \gamma_{\bar x} g^{-1} =  (a_0^{\bar x} h_{\bar x} u_{\bar x})^{-1} \exp(-\vv v_{\bar x})  u(r_{\bar x} e^{2 d_0 \ell}) u_{\bar x}. 
  \end{equation}
  
  \par Without loss of generality, we can assume that for different $\bar x_1$ and $\bar x_2$ as above, 
  $|u^{-1}_{\bar x_1} u_{\bar x_2}| \ge e^{5  d_0 \ell}$. 
  \par For each $\bar x$ we get $\gamma_{\bar x} \in \Gamma$. We claim that those $\gamma_{\bar x}$'s are different. 
  In fact, if there exist $\bar x, \bar x'$ such that $\gamma_{\bar x} = \gamma_{\bar x'}$, then 
  we have 
  \begin{align*} & (a_0^{\bar x} h_{\bar x} u_{\bar x})^{-1} \exp(-\vv v_{\bar x}) u(r_{\bar x} e^{2 d_0 \ell}) u_{\bar x} \\ 
     = & (a_0^{\bar x'} h_{\bar x'} u_{\bar x'})^{-1} \exp(-\vv v_{\bar x'}) u(r_{\bar x'} e^{2d_0 \ell}) u_{\bar x'}. \end{align*}
    This implies that 
    \begin{align*} &  \exp(\vv v'_{\bar x}) (a_0^{\bar x})^{-1} u_{\bar x}^{-1} h_{\bar x}^{-1} u_{\bar x} u(r_{\bar x} e^{2d_0 \ell}) \\ 
      = &  \exp(\vv v'_{\bar x'}) (a_0^{\bar x'})^{-1} u_{\bar x'}^{-1} h_{\bar x'}^{-1} u_{\bar x'} u(r_{\bar x'} e^{2d_0 \ell}),
    \end{align*}
    where $\vv v'_{\bar x} = - \Ad((a_0^{\bar x} h_{\bar x} u_{\bar x})^{-1})\vv v_{\bar x}$ and $\vv v'_{\bar x'}$
    denotes the same expression with $\bar x$ replaced by $\bar x'$. 
    By comparing the $\fr_1 + \fr_2$, $A_0$ and $H$ components of both sides, we 
    have $\exp(\vv v'_{\bar x}) = \exp(\vv v'_{\bar x'})$, $a_0^{\bar x} = a_0^{\bar x'}$ and 
    \[ u_{\bar x}^{-1} h_{\bar x}^{-1} u_{\bar x} u(r_{\bar x} e^{2 d_0 \ell}) = u_{\bar x'}^{-1} h_{\bar x'}^{-1} u_{\bar x'} u(r_{\bar x'} e^{2 d_0 \ell}).\]
    This implies that 
    \begin{equation} 
      \label{eq:bound-3}
      u(-L) h_{\bar x} u(L) = u(r e^{2 d_0 \ell}) h_{\bar x'}, \end{equation}
    where $u(L) = u_{\bar x} u^{-1}_{\bar x'}$ and $r = r_{\bar x} - r'_{\bar x'}$. Note that $|r|\le 1$ and $|L| \ge e^{10 d_0 \ell}$.
    $|r| \le 1$ implies that the norm of the right hand side is $\le e^{2 d_0 \ell}$. 
    On the other hand, the left hand side is equal to
    \[ \exp(\Ad(u(-L))\vv h_{\bar x} ) = \exp (\Ad(u(-L)) (a_0 \fa + a_1 \fu + a_2 \fu^\ast) ) \]
    whose $\fu$ coordinate is $a_1  + a_0 L + a_2 L^2$. Since $|a_2| \ge \beta e^{-2 d_0 \ell}$ (by \eqref{eq:a0ge}) and $|L| \ge e^{5 d_0 \ell}$, 
    we have that
    \[ |a_1  + a_0 L + a_2 L^2| \ge \beta e^{8 d_0 \ell}. \]
    Therefore, we have that the equality \eqref{eq:bound-3} is impossible to hold and conclude the claim.
  
  \par Let us consider the adjoint action of $G$ on $\bigoplus_{j=1}^8 \wedge^j \fg$ and denote 
  $$v_H = \fa \wedge \fu \wedge \fu^{\ast}.$$
  Then the stabilizer of $v_H$
  \[\Stab(v_H) = A_0 H.\]
  Then \eqref{eq:bound-2} implies that 
  \[ \gamma_{\bar x} g^{-1} v_H =  \exp(\Ad ( g^{-1})\vv v'_{\bar x})  g^{-1} v_H. \]
  The norm of $g^{-1} v_H$ is $\le \|g^{-1}\|^{300}$. 
  Let us estimate the norm of $\vv v'_{\bar x}$. Note that $\|u_{\bar x}\| \le e^{\eta_1 \ell}$, we have that the norm of 
  $(a_0^{\bar x} h_{\bar x} u_{\bar x})^{-1}$ is bounded by $2 e^{\eta_1 \ell}$. Therefore, $\|\vv v'_{\bar x}\| \le e^{-\ell + \eta_1 \ell}$.
  Let us denote $v_0 = g^{-1} v_H/ \| g^{-1} v_H\|$, then we have 
  \[\| \gamma_{\bar x} v_0 - v_0\| \le \|\vv v'_{\bar x}\| \le e^{- (1-\eta_1) \ell}. \]
 \par Considering the group generated by $\gamma_{\bar x}$'s, we have the following two cases:
 \par \textbf{Case 1}: $\langle \gamma_{\bar x} \rangle$ is abelian.
 \par \textbf{Case 2}: $\langle \gamma_{\bar x} \rangle$ is not abelian.
 \par For \textbf{Case 1}, we claim that there exists $\gamma_{\bar x}$ whose unipotent part is not trivial. 
 In fact, if every $\gamma_{\bar x}$ is diagonizable, then they belong to a maximal torus in $G$. Note that $\|\gamma_{\bar x}\| \le e^{\eta_1 \ell}$, 
 we have that there are at most $\ell^{100}$ different $\gamma_{\bar x}$'s which contradicts 
 the fact that there are $e^{\eta_2 \ell}$ different $\gamma_{\bar x}$'s. Note that $\gamma_{\bar x} \in \SL(3,\Z)$, 
 if $\gamma_{\bar x}$ has nontrivial unipotent part, it must be unipotent. 
 By repeating the same argument as in \cite{einsiedler-margulis-venkatesh},
 we can find $g' \in G$ satisfying that $\gamma_{\bar x} g'^{-1} v_H = g'^{-1} v_H$ and 
 \[ \|g' - g\| \le \| \gamma_{\bar x} v_0 - v_0\| \|\gamma_{\bar x}\|^{10} \le e^{- (1-\eta_1)\ell} e^{10 \eta_1 \ell} \le e^{-\ell/2}.  \]
 
 Then $g' \gamma_{\bar x} g'^{-1} \in A_1 H$. Since $\gamma_{\bar x}$ is unipotent, we have $g' \gamma_{\bar x} g'^{-1} \in  H$.
 We claim that the lattice $g' \Z^3$ contains a nonzero vector $\vv p $ 
 satisfying $\|\vv p\| \le e^{2 \eta_1 \ell}$ and $\vv p \in \R \vv e_2 + \R \vv e_3$. In fact, we can find a basis 
 $\{\vv p_1, \vv p_2, \vv p_3\}$ of $g'\Z^3$ such that $\|\vv p_j\| \le \|g'\| \le 2 \|g\| \le 2 \beta^{-1}$ for $j=1,2,3$.
  Since $g' \gamma_{\bar x} g'^{-1}$ is a unipotent element in $H$, its fixing vectors 
  \[ V(g' \gamma_{\bar x} g'^{-1}) := \{\vv p \in \R^3 : g' \gamma_{\bar x} g'^{-1} \vv p = \vv p \} \]
  has dimension $2$. Therefore, $g' \gamma_{\bar x} g'^{-1} \vv p_j \neq \vv p_j$ for some $j \in \{1,2,3\}$. On the other hand, we have $g' \gamma_{\bar x} g'^{-1} \vv p_j \in g' \Z^3$. Thus, 
  \[ g' \gamma_{\bar x} g'^{-1} \vv p_j - \vv p_j \in g' \Z^3. \]
  Moreover, since $g' \gamma_{\bar x} g'^{-1} \in H$, it is easy to see that $\vv p := g' \gamma_{\bar x} g'^{-1} \vv p_j - \vv p_j \in \R \vv e_2 + \R \vv e_3$. 
  Its norm is bounded by 
  \begin{align*} 
  \|\vv p\| &\le \|\vv p_j\| + \|g' \gamma_{\bar x} g'^{-1}\| \|\vv p_j\| \\ 
   &\le \|\vv p_j\| + \|g'\| \|\gamma_{\bar x}\| \|{g'}^{-1}\| \|\vv p_j\| \le 2\beta^{-1} + e^{\eta_1 \ell} \beta^{-10} \le e^{2 \eta_1 \ell}. \end{align*}
   Then for any $\eta_3 >0$,
  \[ \| a_0( \eta_3 \ell) \vv p \| = e^{- \eta_3 \ell/6} \|\vv p\| \le e^{-(\eta_3/6 - 2 \eta_1)\ell}.\]
  Let us choose $\eta_3 = 24 \eta_1$. Then it is easy to see that 
  \[ \|a_0(\eta_3 \ell) g' - a_0(\eta_3 \ell) g\| \le 1, \]
  and $a_0( \eta_3 \ell) g' \Gamma \not\in X_{e^{-2\eta_1 \ell}}$. 
  This implies that $a_0(\eta_3 \ell) g \Gamma \not\in X_{\beta^{1/4}}$ which contradicts that $g\Gamma = x \in \cF$.
  \par Let us consider \textbf{Case 2}. Let us take $\gamma = \gamma_{\bar x}$ and $\gamma' = \gamma_{\bar x'}$ not commuting. 
  Using the same argument as in \textbf{Case 1} (again using the argument in \cite{einsiedler-margulis-venkatesh}), we can find we can find $g' \in G$ 
  satisfying that $\gamma g'^{-1} v_H = g'^{-1} v_H$, $\gamma' g'^{-1} v_H = g'^{-1} v_H$  and 
  \[ \|g' - g\| \le   e^{-\ell/2}. \]
  This implies that $g' \gamma g'^{-1}, g' \gamma' g'^{-1} \in A_1 H$. Then their commutator 
  \[ g' \gamma \gamma' \gamma^{-1} \gamma'^{-1} g'^{-1} \in H. \]
  Then we can use the same argument as in \textbf{Case 1} to 
  deduce that $a_0(\eta_3 \ell) g\Gamma \not\in X_{\beta^{1/4}}$ leading to the same contradiction.
  \par This completes the proof of Proposition \ref{prop:initial-bound-dimension}.
\end{proof}

\par The following statement tells that the orbit can not be concentrated on a small neighborhood of a 
piece of $\SL(2, \R) \ltimes \R^2$ orbit:
\begin{prop}
  \label{prop:sl2xr2-closing-lemma}
  For any $x \in \cF$, $j = 1,2$, and any $\ell \in [2\delta_3 t, \delta_1 t]$
  \[ \mu_{\cF}(Q(\fr_j, \ell, \beta)B^{A_0 H}(\beta)x) \le e^{- d_0 \ell/2}.\]
 \end{prop}
 \begin{proof}
  \par Let us prove the statement for $j = 1$. The proof for $j=2$ is similar.
  \par For a contradiction, let us assume that there exists $x \in \cF$ such that
  \[ \mu_{\cF}\left(Q(\fr_1, \ell, \beta)B^{A_0 H}(\beta)x\right) > e^{- d_0 \ell/2}. \]
  Let us denote $\tilde x = a_0(\ell) x$ and $\hat \mu = a_0(\ell)_\ast \mu_{\cF}$.
 Noting that 
 $$a_0(\ell) Q(\fr_1, \ell, \beta)B^{A_0 H}(\beta)x = Q_{\ell/2}(\beta) B^{A_0 H}(\beta)\tilde x,$$
 we have that 
 \[ \hat \mu \left(Q_{\tilde \ell}(\beta) B^{A_0 H}(\beta)\tilde x\right) > e^{- d_0 \tilde \ell}, \]
 where $\tilde \ell = \ell/2 \in [\delta_3 t, \delta_1 t]$. By repeating the argument in the proof of 
 Proposition \ref{prop:initial-bound-dimension}, we can show that 
 $a_0(\eta_3 \tilde \ell) \tilde x = a_{0}(\ell +\eta_3 \tilde \ell)x \not\in X_{\beta^{1/4}}$, 
 which contradicts to that $x \in \cF$. This completes the proof.

 \end{proof}
{
\section{Basic Reduction}
\label{sec-reduction}
\par In this section we introduce some notation and make a basic reduction of the original problem. 
\par Let us first introduce some definitions:
\begin{defn}
  \label{def:dimension-control}
  Given a probability measure $\mu$ on $X$, we say that $\mu$ is $(d, \ell)$-good if for any 
  $x_0 \in X$,
  \[\mu (Q_\ell(\beta) B^{A_0 H}(\beta) x_0) \le e^{- d \ell}.\]
  We say that $\mu$ is $(d, [\ell_1, \ell_2])$-good if it is $(d, \ell)$-good for any $\ell \in [\ell_1 , \ell_2]$.
\end{defn}
\par According to this definition, Proposition \ref{prop:initial-bound-dimension} says that
 $\mu_{\cF}$ is $( d_0, [\delta_3 t, \delta_1 t])$-good. 
\par Starting from $s_0 = \delta_1 t/2$ and $d_0$, let us define $\{(d_i, s_i): i \in \N\}$ 
 by $s_{i+1} = s_i/2$ and $d_{i+1} = d_i + \epsilon_2$.
\par Starting from $\cF_0 = \cF$, we will 
construct a sequence $\{\cF_i : i \in \N\}$ such that every 
$\cF_{i+1}$ is constructed by removing a small proportion from $a(s_i) \cF_{i}$.
\par In the following several sections, we will prove that $\mu_{\cF_{i+1}}$ has better dimension control than $\mu_{\cF_i}$. Similarly to Definition \ref{def:initial-construction},
by removing an exponentially small proportion, we can assume that for any $x \in \supp \mu_{\cF_i}$, any $0 \le \ell \le 2 \delta_2 t$ 
and $ -2 \delta_1 t \le \ell' \le 2 \delta_1 t$, $a_0(\ell')a(-\ell)x \in X_{\beta^{1/4}}$. By repeating the proof of Proposition 
\ref{prop:initial-bound-dimension} and \ref{prop:sl2xr2-closing-lemma}, we have that Proposition \ref{prop:initial-bound-dimension} and \ref{prop:sl2xr2-closing-lemma} hold for $\mu_{\cF_i}$.
\par For notational simplicity, for $s' >0,  \ell \ge 0$, let us denote 
\begin{equation}
  \label{eq:pi-s-l}
  \Pi_{s', \ell} := a(2s')B^{A_0 A}(\beta) B^{U^\ast}(\beta)\exp\left( B_{\vv r^-}(\beta e^{-\ell}) \right) \exp\left(B_{\vv r^+}(\beta e^{-\ell})\right)  B^{U}(\beta)a(-2s'),
\end{equation}
and
\begin{equation}
  \label{eq:pi-s}
  \Pi_{s'} := \Pi_{s', 0}.
\end{equation}

\par For $x \in X$, let us denote 
\[ \Pi(x) := \Pi_{s'} x. \]

\par Given that $\mu_{\cF_i}$ is $(d_i, s_i)$-good, our aim is to prove that $\mu = a(s_i)_\ast \mu_{\cF_i}$ is 
$(d_i + \epsilon_2, s_{i+1})$-good after removing an exponentially small proportion. 
\par We have the following reduction of the original task:
\begin{prop}
\label{prop:reduction}
For $x \in X$, let us call a neighborhood of the form $\Pi(x) = \Pi_{s'} x$ a playground. A playground $\Pi(x)$ is called a full playground if 
\[ \mu(\Pi(x)) \ge \beta^{15}. \]
For any full playground $\Pi(x)$, let us denote 
\[ \mu_{\Pi(x)} := \mu(\Pi(x))^{-1} \mu|_{\Pi(x)}. \]
If for any full playground $\Pi(x)$, there exists 
\[ \cR_x \subset \Pi(x) \]
such that 
$$\mu_{\Pi(x)} (\cR_x) \le \beta^{20},$$ 
and after removing $\cR_x$, $\mu_{\Pi(x)}$ is $(d_i+2\epsilon_2, s_{i+1})$-good. Then there exists $\cR_{\diamond} \subset X$ such that 
\[ \mu(\cR_{\diamond}) \le \beta^2, \]
and after removing $\cR_{\diamond}$, $\mu$ is $(d_i + \epsilon_2, s_{i+1})$-good.
\end{prop}
\begin{proof}
\par Note that there exists a finite subset $\fQ \subset X$ with $|\fQ| \le \beta^{-9}$ and 
\[ X = \bigcup_{x \in \fQ}  \Pi(x). \]
Let $\fQ_1 \subset \fQ$ denote the set of $x \in \fQ$ such that $\Pi(x)$ is a full playground, and 
$\fQ_2 = \fQ \setminus \fQ_1$. Then 
\[ \mu \left( \bigcup_{x \in \fQ_2}  \Pi(x) \right) \le \beta^{-9} \times \beta^{15} \le \beta^{6}.  \]
By the given condition, for every $x \in \fQ_1$, we get $\cR_x \subset \Pi(x)$ with 
\[ \mu_{\Pi(x)} (\cR_x) \le \beta^{20}, \]
and after removing $\cR_x$, $\mu_{\Pi(x)}$ is $(d_i + 2 \epsilon_2, s_{i+1})$-good. Then 
\[ \mu\left( \bigcup_{x \in \fQ_1} \cR_x\right) \le \beta^{-9} \times \beta^{20} = \beta^{11}. \]
Let 
\[ \cR_{\diamond} = \bigcup_{x \in \fQ_1} \cR_x \cup \bigcup_{x \in \fQ_2} \Pi(x). \]
Then 
\[ \mu(\cR_{\diamond}) \le \beta^{6} + \beta^{11} \le \beta^2. \]
Let $\mu'$ denote the measure constructed by removing $\cR_{\diamond}$ from $\mu$. Let us prove that $\mu'$ is 
$(d_i + \epsilon_2, s_{i+1})$-good. In fact, for any $x' \in X$, let us denote 
\[ Q(x') = Q_{s_{i+1}}(\beta) B^{A_0 H}(\beta) x'. \]
Then we have
\[ \mu'( Q(x')) \le \sum_{x \in \fQ} \mu'_{\Pi(x)} (Q(x')),  \]
where $\mu'_{\Pi(x)}$ denotes the measure constructed by removing $\cR_{\diamond}$ from $\mu_{\Pi(x)}$. Note that for any $x \in \fQ$, $\mu'_{\Pi(x)}$ is $(d_i + 2 \epsilon_2, s_{i+1})$-good, we have 
\[ \mu'_{\Pi(x)} (Q(x')) \le e^{-(d_i + 2 \epsilon_2) s_{i+1}}. \]
Thus, 
\[ \mu'( Q(x')) \le \beta^{-9} \times e^{-(d_i + 2 \epsilon_2) s_{i+1}} \le e^{-(d_i + \epsilon_2) s_{i+1}}.  \]
This completes the proof.
\end{proof}
\par Let us denote $\epsilon_1 = 2 \epsilon_2$. By Proposition \ref{prop:reduction}, it suffices to prove that for any full playground $\Pi(x)$, the measure 
$\mu_{\Pi(x)}$ is $(d_i+ \epsilon_1, s_{i+1})$-good after removing a $\beta^{20}$-small proportion. Therefore, in the following several sections, we will focus on a fixed full playground $\Pi(x)$ and study the measure $\tilde \mu = \mu_{\Pi(x)}$ on $\Pi(x)$.

\section{Construction of a Kakeya-type model}
\label{sec-kakeya-model}
\par From now on we will fix a full playground $\Pi(x)$ and denote $\tilde \mu = \mu_{\Pi(x)}$.
\par In this section we will introduce a Kakeya-type model which enables us to study the dimension of the orbit measure.

\par Let us introduce some notation.
\par Let us denote
\begin{equation} \label{eq:omega-neighborhood}\Omega_{s'} := Q^{s'}_{3s'}(2\beta) Q^{A_0 H}(2s', 2\beta) ,\end{equation}
and
\begin{equation}\label{eq:theta-neighborhood}
\Theta_{s'} :=  B^{A_0 A}(\beta) B^{U^\ast}(\beta e^{-2s'})\exp\left( B_{\vv r^-}(\beta e^{-s'}) \right)\exp\left( B_{\vv r^+}(\beta e^{-s'}) \right) B^U(\beta).\end{equation}
For $s' >0$, let us denote
\begin{equation}
  \label{eq:sigma-neighborhood}
  \Sigma_{s'} :=  B^{A_0 A}(\beta) B^{U^\ast}(\beta e^{-2s'}) B_{\vv r^-}(\beta e^{-s'})B_{\vv r^+}(\beta e^{s'}) B^U(\beta).
\end{equation}
For $x \in X$, let us denote
\[ \Sigma(x) := \Sigma_{s'} x,\]
\[ \Theta(x) := \Theta_{s'} x,  \]
and 
\[ \Omega(x) := \Omega_{s'} x. \]
\par We will need the following lemmata:
\begin{lemma}
    \label{lm:two-theta}
\par Let us denote 
\[ \hat{\Theta}_{s'} := Q_{s'}(1.1\beta) Q^{A_0 H}(2s', 1.1\beta). \]
Then 
\[ \Theta_{s'} \subset \hat{\Theta}_{s'}. \]
\end{lemma}
\begin{proof}
\par Let us take
$$g = a_0 a u^\ast (r^\ast) \exp(\vv w^-) \exp(\vv w^+) u(r) \in \Theta_{s'}.$$
By Lemma \ref{lm:product-w}, 
\[\exp(\vv w^-) \exp(\vv w^+) = \exp(\bar{\vv w} ) \exp( \bar{\vv h}), \]
where $\bar{\vv w} \in B_{\fr_1 + \fr_2}(1.01\beta e^{-s'})$ and $\bar{\vv h} \in B_{\R \fa_0 + \fh} (0.01\beta e^{-2s'})$.
Therefore, we have 
\[ g = a_0 a u^\ast (r^\ast) \exp(\bar{\vv w}) \exp(\bar{\vv h}) u(r).\]
By rearranging the equation above, we have that $g \in \hat{\Theta}_{s'}$.
\end{proof}
\begin{lemma}
\label{lm:product-theta}
\par Let us denote
    \[ \bar{\Theta}_{s'} := Q_{s'}(2.21\beta) Q^{A_0 H}(2s', 2.21\beta). \]
    Then for any $\epsilon_i \in \{\pm 1\}$ for $i=1,2$,
    \[ \Theta^{\epsilon_1}_{s'}\Theta^{\epsilon_2}_{s'}  \subset \bar{\Theta}_{s'}. \]
\end{lemma}
\begin{proof}
\par We will only prove 
\[ \Theta_{s'}^{-1} \Theta_{s'}  \subset \bar{\Theta}_{s'}. \]
Other cases can be proved similarly.
\par By Lemma \ref{lm:two-theta}, it suffices to show that 
\[ \hat{\Theta}_{s'}^{-1} \hat{\Theta}_{s'}  \subset \bar{\Theta}_{s'},\]
where $\hat{\Theta}_{s'}$ is defined as in Lemma \ref{lm:two-theta}.
\par Let us take $g = \exp(\vv w) \exp(\vv h) \in \hat{\Theta}_{s'}$ and $g' = \exp(\vv w') \exp(\vv h') \in \hat{\Theta}_{s'}$. Then
\[ g^{-1} g' = \exp(-\vv h) \exp(-\vv w) \exp(\vv w') \exp(\vv h'). \]
By Lemma \ref{lm:product-w}, we have 
\[ \exp(-\vv w) \exp(\vv w') = \exp(\bar{\vv w}) \exp (\bar{\vv h}), \]
where $\bar{\vv w} \in B_{\fr_1 + \fr_2}(2.201\beta e^{-s'})$, and $\bar{\vv h} \in B_{\R \fa_0 + \fh}(0.001\beta e^{-2s'})$.
 Thus, we have 
\[ g^{-1} g' = \exp(-\vv h) \exp(\bar{\vv w}) \exp(\bar{\vv h}) \exp(\vv h'). \]
By rearranging the equation above, we conclude that $g^{-1} g'  \in \bar{\Theta}_{s'}$. This completes the proof.
\end{proof}
Using Lemma \ref{lm:product-theta}, it is easy to prove the following:
\begin{cor}
    \label{cor:product-theta}
    \par Let us denote 
    \[ \tilde{\Theta}_{s'} := Q_{s'}(4.5\beta) Q^{A_0 H}(2s', 4.5\beta). \]
    Then for any $\epsilon_i \in \{\pm 1\}$ for $i=1,2,3,4$,
    \[ \Theta^{\epsilon_1}_{s'}\Theta^{\epsilon_2}_{s'} \Theta^{\epsilon_3}_{s'} \Theta^{\epsilon_4}_{s'} \subset \tilde{\Theta}_{s'}. \]
\end{cor}
\begin{proof}
\par We will only prove 
\[\Theta_{s'}\Theta_{s'} \Theta_{s'} \Theta_{s'} \subset \tilde{\Theta}_{s'}. \]
Other cases can be proved similarly. In fact, by Lemma \ref{lm:product-theta}, we have 
\[ \Theta_{s'}\Theta_{s'} \subset \bar{\Theta}_{s'}, \]
where $\bar{\Theta}_{s'}$ is defined as in Lemma \ref{lm:product-theta}. By repeating the proof of Lemma \ref{lm:product-theta}, we get 
\[ \bar{\Theta}_{s'} \bar{\Theta}_{s'} \subset \tilde{\Theta}_{s'}. \]
Combining the two inclusions above, we conclude that 
\[\Theta_{s'}\Theta_{s'} \Theta_{s'} \Theta_{s'} \subset \tilde{\Theta}_{s'}.  \]
\end{proof}
\par We will need the following lemma concerning different coordinate systems in $G$:
\begin{lemma}
    \label{lm:different-coordinate-1}
    For $\vv w^+ \in \vv r^+$ and $ \vv w^- \in \vv r^-$ with $\|\vv w^+\|, \|\vv w^-\| \le 10 \beta e^{-\ell}$, we have that
    \[ \exp(\vv w^+ + \vv w^-) =  a_0 a u^{\ast} (r^\ast) \exp(\bar{\vv w}^-) \exp(\bar{\vv w}^+) u(r), \]
    where $\bar{\vv w}^+ \in \vv r^+$ with $\|\bar{\vv w}^+ - \vv w^+\| \le 0.01\beta e^{-3\ell}$, $\bar{\vv w}^- \in \vv r^-$ with $\|\bar{\vv w}^- - \vv w^-\| \le 0.01\beta e^{-3\ell}$, $a_0 a \in B^{A_0 A}(0.01\beta e^{-2\ell})$, 
    and $r^\ast, r \in [0.01\beta e^{-2\ell}]$.
    \par Similarly, we have 
    \[ \exp(\vv w^+ + \vv w^-) =  a_0 a u (r) \exp(\bar{\vv w}^+) \exp(\bar{\vv w}^-) u^\ast(r^\ast), \]
    where $\bar{\vv w}^+ \in \vv r^+$ with $\|\bar{\vv w}^+ - \vv w^+\| \le 0.01\beta e^{-3\ell}$, $\bar{\vv w}^- \in \vv r^-$ with $\|\bar{\vv w}^- - \vv w^-\| \le 0.01\beta e^{-3\ell}$, $a_0 a \in B^{A_0 A}(0.01\beta e^{-2\ell})$, 
    and $r^\ast, r \in [0.01\beta e^{-2\ell}]$.
\end{lemma}
\begin{proof}
\par We only prove the first statement. The second statement can be proved using the same argument.
\par Let us write 
\[ \exp(\vv w^+ + \vv w^-) =  a_0 a u^{\ast} (r^\ast) \exp(\bar{\vv w}^-) \exp(\bar{\vv w}^+) u(r), \]
where $a_0 a \in B^{A_0A} (10\beta e^{-\ell})$, $r^\ast, r \in [10\beta e^{-\ell}]$, 
$\bar{\vv w}^- \in B_{\vv r^-}(10 \beta e^{-\ell})$, and $\bar{\vv w}^+ \in B_{\vv r^+}(10 \beta e^{-\ell})$.
Noting that 
\[ \|[\bar{\vv w}^-, \bar{\vv w}^+]\| \le \|\bar{\vv w}^-\| \cdot \|\bar{\vv w}^+\| \le 0.001 \beta e^{-2\ell}, \]
by Lemma \ref{lm:product-w}, we have 
\[ \exp(\bar{\vv w}^-) \exp(\bar{\vv w}^+) = \exp(\tilde{\vv w}^- + \tilde{\vv w}^+) \exp( \vv h), \]
where $\tilde{\vv w}^- \in \vv r^-$ with $\| \tilde{\vv w}^- - \bar{\vv w}^-\| \le 0.002\beta e^{-3\ell}$, $\tilde{\vv w}^+ \in \vv r^+$ with $\| \tilde{\vv w}^+ - \bar{\vv w}^+\| \le 0.002\beta e^{-3\ell}$, and 
$\vv h \in B_{\R \fa_0 +\fh}(0.002\beta e^{-2\ell})$. Thus, we get 
\begin{align*} 
\exp(\vv w^+ + \vv w^-) &=  a_0 a u^{\ast} (r^\ast) \exp(\bar{\vv w}^-) \exp(\bar{\vv w}^+) u(r) \\
                        &=  a_0 a u^{\ast} (r^\ast) \exp( \tilde{\vv w}^- + \tilde{\vv w}^+ ) \exp(\vv h) u(r) \\
                        &= \exp( \Ad_{h} (\tilde{\vv w}^- + \tilde{\vv w}^+) ) a_0 a u^{\ast} \exp(\vv h) u(r),
\end{align*}
where $h = a_0 a u^{\ast} (r^\ast)$. Then we have 
\[ \vv w^+ + \vv w^- = \Ad_{h} (\tilde{\vv w}^- + \tilde{\vv w}^+),\]
and 
\[ a_0 a u^{\ast}(r^\ast) \exp(\vv h) u(r) = \id. \]
Noting that $\|\vv h\| \le 0.003 \beta e^{-2\ell} $, we get $a_0 a \in B^{A_0 A}(0.01\beta e^{-2\ell})$, $r^\ast , r \in [0.01\beta e^{-2\ell}]$. Note that 
\[ \| \Ad_{h} (\tilde{\vv w}^- + \tilde{\vv w}^+) - (\tilde{\vv w}^- + \tilde{\vv w}^+) \| \le 0.001\beta e^{-3\ell}. \]
Combining the estimates from above, we get 
\[ \|\vv w^+ - \bar{\vv w}^+\|, \|\vv w^- - \bar{\vv w}^-\| \le 0.01\beta e^{-3\ell}. \]
This completes the proof.
\end{proof}
For $r \in [\beta e^{2s'}]$, let us denote 
\begin{equation} \Sigma^r = \Sigma(u(r)x) \subset \Pi(x). \end{equation}
 $\Sigma^r$ can be regarded as a cross-section of $\Pi(x)$ along the $U$-orbits in the following sense: 
 \begin{lemma}
  \label{lm:cross-section}
  For $y \in \Pi(x)$ and $r \in [\beta e^{2s'}]$, let 
  \[J_r(y) :=\left\{r' \in \left[2\beta e^{2s'}\right]: u(r')y \in \Sigma^r\right\}. \]
  Then there exists intervals $J_1 \subset J_r(y) \subset J_2$ with size 
  $ 0.5 \beta \le |J_1| \le |J_2| \le 4 \beta$.
 \end{lemma}
 \par We will postpone the proof after Definition \ref{def:kakeya-model}.
\par For $\bar x \in \Sigma^r \subset  \Pi(x)$, let us denote 
\[ \Pi_{s', \ell} (\bar x) = \Pi_{s', \ell} \bar x,  \]
and 
\[\Sigma_{s', \ell}(\bar x) = \Pi_{s',\ell} (\bar x) \cap \Sigma^r.\]
Then $\Sigma_{s', \ell} (\bar x)$ can also be regarded as a cross-section of $\Pi_{s', \ell} (\bar x)$ along the $U$-orbits. 
\par We first prove the following lemma:
\begin{lemma}
  \label{lm:initial-dimension-under-a-t}
   For any $\delta_3 t \le \ell \le \delta_1 t$ and any $\bar x \in \Pi(x)$, we have
  \[ \tilde \mu(\Sigma_{s', \ell} (\bar x)) \le e^{-2s'} e^{-d_0 \ell}.\]
\end{lemma}
\begin{proof}
  \par Note that $\Sigma_{s', \ell} (\bar x)$ is a cross-section of $\Pi_{s', \ell}(\bar x)$ along the $U$-orbits, 
  and $\tilde \mu$ is Lebesgue along the $U$-orbits.
   We have 
  \[ \tilde \mu(\Sigma_{s', \ell} (\bar x) ) \ll e^{-2s'} \tilde \mu (\Pi_{s', \ell} (\bar x) ).\]
  Note that by the definition of $\tilde \mu$, we have
  \[ \tilde \mu (\Pi_{s', \ell} (\bar x)) \le \mu_{\cF_i} (a(-2s')\Pi_{s', \ell} (\bar x) ), \]
  and 
  \[ a(-2s')\Pi_{s',  \ell} (\bar x) = B^{A_0 A}(\beta) B^{U^\ast}(\beta) \exp\left( B_{\vv r^-}(\beta e^{-\ell}) \right) \exp\left( B_{\vv r^+}(\beta e^{-\ell}) \right) B^{U}(\beta) x' \]
  where $x' = a(-2s') \bar x$. By Lemma \ref{lm:different-coordinate-1}, we have that 
  \[ B^{A_0 A}(\beta) B^{U^\ast}(\beta) \exp\left( B_{\vv r^-}(\beta e^{-\ell}) \right) \exp\left( B_{\vv r^+}(\beta e^{-\ell}) \right) B^{U}(\beta) \subset  Q_\ell(2\beta) B^{A_0 H }(2\beta).\]
  By repeating the proof of Proposition \ref{prop:initial-bound-dimension} to $\cF_i$ we have 
  \[ \mu_{\cF_i}(Q_\ell(2\beta) B^{A_0 H }(2\beta) x') \le e^{-d_0 \ell}, \]
  which concludes the proof.
\end{proof}

\par We then introduce the Kakeya-type model to analyze the behavior of $U$-orbits.

\begin{defn}
  \label{def:kakeya-model}
 \par Let $s'>0$ and $\tilde \mu$ be a probability measure on a playground $\Pi(x) \subset X$ defined by taking the normalized 
 measure on a subset consisting of $U$-orbits of length $\beta e^{2s'}$.  
 Let us fix a sequence of cross-sections 
 $$\left\{\Sigma^r = \Sigma_{s'} u(r)x: r \in \left[\beta e^{2s'}\right]\cap\left(0.1\beta \Z\right) \right\} .$$
 \par We first define the projection map 
 \[ \bP: \Pi(x) \to \R^3 \]
 as follows: For $x' \in \Pi(x)$, we can write
 \[x' =a_{0, x'} a_{x'} u^\ast(r^\ast_{x'} ) \exp(\vv w^-_{x'}) \exp(\vv w^+_{x'}) u(r_{x'} ) x\]
 where $a_{0, x'} \in B^{A_0}(\beta)$, $a_{x'} \in B^A(\beta)$, $\vv w^-_{x'} =  v_{2, x'} \vv v_2 + w_{2, x'} \vv w_2 \in \vv r^- $ 
 and $\vv w^+_{x'} = v_{1, x'} \vv v_1 + w_{1, x'} \vv w_1 \in \vv r^+$, $ r^{\ast}_{x'} \in [\beta e^{-2s'}]$, and $r_{x'}\in [\beta e^{2s'}]$.
 Then 
 $$\bP(x') := (r_{x'}, v_{1,x'}, w_{1, x'}).$$
 Then 
 $$\bP(\Pi(x)) = \left[\beta e^{2s'}\right] \times \left[\beta e^{s'}\right]^2 \subset \R^3.$$ 
 It is easy to see that for $r \in \left[\beta e^{2s'}\right]$, 
 \[ \bP(\Sigma^r) = (r, 0, 0) + [\beta] \times \left[\beta e^{s'}\right]^2. \]
 \par Let $\cI$ be a finite subset of $\Pi(x)$ defined as follows:
 \begin{equation} 
 \label{eq:cI}
 \cI := \left\{ \exp(v_1 \fd \vv v_1 + w_1 \fd \vv w_1 ) u(r \fd') x: r, v_1, w_1 \in \left[10 e^{2s'}\right]\cap \Z  \right\},\end{equation}
 where $\fd = 0.1 \beta e^{-s'}, \fd' = 0.1 \beta$. For $z \in \cI$, let $\Theta'(z) := \Theta(z) \cap \Pi(x)$.
 
 For every $z \in \cI$, let $\cI_z$ be a finite subset of $\Theta'(z)$ defined as follows:
 \begin{equation}
 \label{eq:cIz}
 \cI_z := \left\{ \exp(v_2 \fd'' \vv v_2 + w_2 \fd'' \vv w_2 ) z : v_2, w_2 \in \left[10 e^{2s'}\right]\cap \Z \right\},
 \end{equation}
 where $\fd'' = 0.1 \beta e^{-3s'}$. For $y \in \cI_z$, let $\Omega'(y):= \Omega(y) \cap \Theta'(z)$. Let us denote 
 $$\fJ := \bigcup_{z \in \cI} \cI_z.$$
 \par For any $y \in \Pi(x)$, let us define 
 \begin{equation}
 \label{eq:fly}
 \fl(y) := \left\{ r \in \left[2\beta e^{2s'}\right]: u(r) y \in \Pi(x) \right\}.
 \end{equation}
 \par For each $z \in \cI$ and $y \in \cI_z$, let us write 
 \[ y = \exp\left(\vv w^-_y\right) \exp\left(\vv w^+_y\right) u\left(r_y \right) x,\]
 where  $$\vv w^-_y =  v_{2, y} \vv v_2 + w_{2, y} \vv w_2 \in B_{\vv r^-}\left(\beta e^{-s'}\right),$$ 
 $$\vv w^+_y = v_{1, y} \vv v_1 + w_{1, y} \vv w_1 \in  B_{\vv r^+}\left(\beta e^{s'}\right),$$ 
 and $r_y \in \left[\beta e^{2s'}\right]$. 
 Let us define the curve $\cL(y)$ as follows:
 \[ \cL(y) := \left\{ \left(f_y(r), f_{v, y}(r), f_{w, y}(r) \right) = \bP\left(u(r) y\right) : r \in \fl(y) \right\}. \]
 It is straightforward to check that 
 \begin{equation}
  \label{eq:fvy}
  f_{v, y}(r) = \frac{2r v_{2, y}}{2 + r v_{2, y} w_{2, y}} + v_{1, y}, 
 \end{equation}
 \begin{equation}
  \label{eq:fwy}
  f_{w, y}(r) = \frac{-2r w_{2, y}}{2 - r v_{2, y} w_{2, y}} + w_{1, y},
 \end{equation}
 and 
 \begin{equation}
  \label{eq:fy}
  \begin{array}{rl}
  f_y(r)  & = r_y + \frac{2r + 2r w_{1,y} v_{2, y}}{2 + r v_{2, y} w_{2, y}}  + \frac{w_{1,y}v_{1,y} - f_{v, y}(r)f_{w, y}(r)}{2} \\
          &= r_y + r +  \frac{r w_{2,y} v_{1,y}}{2 - r w_{2,y} v_{2,y}} + \frac{r w_{1,y} v_{2,y}}{2 + r w_{2,y} v_{2,y}} + \frac{ r (r w_{2,y} v_{2,y})^2 }{4 - (r w_{2,y} v_{2,y})^2}
  \end{array}
 \end{equation}
 \par From \eqref{eq:fy} it is easy to see that $|f_{y} (r) - r_y - r| \le \beta |r|$.
 \par Let $\cT(y)$ denote the $2\beta e^{-s'}$-neighborhood of $\cL(y)$. We then assign to $\cT(y)$ a weight 
 \[\cW(y) = \tilde{\mu}(\Omega'(y)).\]
 \par For $y_1, y_2 \in \Pi(x)$, let us define $\Delta(y_1, y_2)$ as follows:
Let us write
 \[ y_2 = a a_0 u^\ast(r^\ast) \exp(\vv w^-) \exp(\vv w^+) u(r) y_1,\]
where $a a_0 \in B^{A A_0} (2\beta)$, $r^\ast \in \left[2\beta e^{-2s'}\right]$, $\vv w^- = v_2 \vv v_2 + w_2 \vv w_2 \in B_{\vv r^-} \left(2\beta e^{-s'}\right)$,
and $\vv w^+ = v_1 \vv v_1 + w_1 \vv w_1 \in B_{\vv r^+} \left(2\beta e^{s'}\right)$. Then we define
 \begin{equation}
  \label{eq:delta-y_1-y_2}
  \Delta(y_1, y_2) = (v_2, w_2 ). 
 \end{equation} 
 Let us call the collection of tubes $\{\cT(y): y \in \fJ\}$ with weights $\{\cW(y)\}$ the Kakeya-type model for $(\tilde \mu, \Pi(x))$.
\end{defn}

\par Now let us prove Lemma \ref{lm:cross-section}:
\begin{proof}[Proof of Lemma \ref{lm:cross-section}]
  \par Without loss of generality, we can assume $r=0$. We first prove $J_0(y) \neq \emptyset$. For any $y \in \Pi(x)$, let us write 
  \[ y = a_y a_{0, y} u^\ast(r^\ast_y) \exp(\vv w_y^-) \exp(\vv w_y^+) u(r_y) x \]
  where $a_y a_{0,y} \in B^{A A_0} (\beta)$, $r_y^\ast \in \left[\beta e^{-2s'}\right]$, 
  $$\vv w_y^- = v_{2,y} \vv v_2 + w_{2,y} \vv w_2 \in B_{\vv r^-} \left(\beta e^{-s'}\right),$$
  $$\vv w_y^+ = v_{1,y} \vv v_1 + w_{1,y} \vv w_1 \in B_{\vv r^+} \left(\beta e^{s'}\right),$$
  and $r_y \in \left[\beta e^{2s'}\right]$. Then for any $r \in \left[\beta e^{2s'}\right]$,
  \[ u(r) y = a'_y a'_{0, y} u^{\ast} (\tilde r^\ast_y) u(\tilde r) \exp(\vv w_y^-) \exp(\vv w_y^+) u(r_y) x,\]
  where $a'_y a'_{0,y} \in B^{A A_0}(\beta)$, $ \tilde r^{\ast}_{y} \in  \left[\beta e^{-2s'}\right]$ 
  and $\tilde r = \tilde r(r)$ is a smooth function of $r$ satisfies that $|r-\tilde r| \le \beta|r|$. Moreover, by \eqref{eq:fy}, we have 
  \[ u(\tilde r) \exp(\vv w_y^-) \exp(\vv w_y^+) u(r_y) x = a_{0,y,\tilde r} a_{y,\tilde r} u^\ast (r^\ast_{y,\tilde r}) \exp(\vv w^-_y(\tilde r)) \exp(\vv w^+_y(\tilde r)) u(f_y(\tilde r)) x,  \]
  where $|f_y(\tilde r) - r_y - \tilde r| \le \beta |\tilde r|$. This implies that 
  $$|f_y(\tilde r) - r_y - r| \le 2 \beta |r|.$$
  To prove that there exists $r$ such that $u(r) y \in \Sigma(x)$, it suffices to show that there exists $r$ 
  such that $f_y(\tilde r) = 0$. Since $f_y(\tilde r)$ depends on $r$ continuously, it is enough to show that there exist $r_1, r_2$ 
  such that $f_y(\tilde r(r_1)) < 0$ and $f_y(\tilde r(r_2)) >0$. By choosing $r$ large (and small, respectively) enough, we can make 
  \[ r+ r_y > 2\beta|r| (\text{ and } r+ r_y < -2\beta |r|, \text{ respectively}),\] 
  which implies that $f_y(\tilde r(r)) >0$ (and $f_y(\tilde r(r)) <0$, respectively). This shows that there exists $r$ such that $u(r)y \in \Sigma(x)$. 
  \par Now fix $r_0$ such that 
  \[ z = u(r_0)y = a_z a_{0, z} u^\ast \left(r^\ast_z\right) \exp\left(\vv w_z^-\right) \exp\left(\vv w_z^+\right) u(0) x. \]
  We first prove that $r_0 + [\beta/2] \subset J_0(y)$, namely, for $r \in \left[\beta/2\right]$, 
  \[ u(r) z = a_{z,r} a_{0, z, r} u^\ast\left(r^\ast_{z,r}\right) \exp\left(\vv w_z^-(r)\right) \exp\left(\vv w_z^+(r)\right) u\left(f_z(r)\right) x, \]
  with $|f_z(r)| \le \beta$. In fact, the above argument shows that $|f_z(r) - r| \le \beta |r|$. Therefore, for $|r| \le  \beta/4$,
  we have $|f_z(r)| \le (1+\beta)|r| \le \beta$. This shows that $r_0 +[\beta/2] \subset J_0(y)$.
 \par Similar argument shows that if $|f_z(r)| \le \beta$, then $|r| \le (1-\beta)^{-1} |f_z(r)| \le 2\beta$. 
 This shows that $J_0(y) \subset r_0 + [4\beta]$.
 \par This completes the proof.
\end{proof}
\par We will need the following lemmata:

\begin{lemma}
\label{lm:partition-1}
We have 
\[ \Pi(x) = \bigcup_{z \in \cI} \Theta'(z). \]
Moreover, for any $z' \in \Pi(x)$, there are at most $N_2$ different $z \in \cI$ such that 
\[ \Theta(z') \cap \Theta(z) \neq \emptyset. \]
\end{lemma}
\begin{proof}
\par We first prove that for any $z' \in \Pi(x)$, there exists $z \in \cI$ such that $z' \in \Theta(z)$. In fact, let us write 
\[ z' = a_{z'} a_{0, z'} u^{\ast} (r^\ast_{z'}) \exp(\vv w^-_{z'}) \exp(v_{1, z'} \vv v_1 + w_{1, z'} \vv w_1) u(r_{z'}) x, \]
where $a_{z'} \in B^A(\beta)$, $a_{0, z'} \in B^{A_0}(\beta)$, $r^\ast_{z'} \in \left[\beta e^{-2s'}\right]$, $\vv w^-_{z'} \in B_{\vv r^-}\left(\beta e^{-s'}\right)$, and
$$(r_{z'} , v_{1, z'}, w_{1, z'}) \in \left[\beta e^{2s'}\right] \times \left[\beta e^{s'}\right]^2.$$
Let us choose $z \in \cI$ such that
\[ z =  \exp(v_1 \vv v_1 + w_1 \vv w_1 ) u(r_z) x \]
such that $|r_z - r_{z'}| \le 0.2 \beta$, and 
\[ \max \{ |v_{1, z} - v_1|, |w_{1,z} - w_1| \} \le 0.2 \beta e^{-s'}. \]
We claim that $z' \in \Theta(z)$. In fact, by direct calculation, we have
\[ z' = a_{z'} a_{0, z'} u^{\ast} (r^\ast_{z'}) \exp(\vv w^-_{z'}) \exp(v'_1 \vv v_1 + w'_1 \vv w_1) u(r') z, \]
where $v'_1 = v_{1, z'} - v_1$, $w'_1 = w_{1, z'} - w_1$, and $r' = r_{z'} - r_z - \frac{v'_1 w_1 - v_1 w'_1}{2}$.
Note that 
$$\max \left\{ |v'_1|, |w'_1| \right\} = \max \left\{ | v_{1, z'} - v_1|, |w_{1, z'} - w_1| \right\} \le 0.2 \beta e^{-s'},$$
and 
\[ |r'| \le |r_{z'} - r_z| + |v'_1 w_1| + |v_1 w'_1| \le 0.1 \beta + 2 \times 0.2 \beta e^{-s'} \times \beta e^{s'} \le 0.3\beta. \]
Thus, we have $z' \in \Theta(z)$.
\par We then prove that there are at most $N_2$ different $z \in \cI$ such that 
\[ \Theta(z') \cap \Theta(z) \neq \emptyset.  \]
In fact, let us take
\[ \tilde z \in \Theta(z') \cap \Theta(z). \]
Then we have 
\[ z' \in  \Theta^{-1}_{s'} \tilde z \subset \Theta^{-1}_{s'} \Theta_{s'}  z.  \]
By Lemma \ref{lm:product-theta}, we have 
\[ z' \in \bar{\Theta}_{s'}z, \]
namely,
\[ z' =  \exp(\bar{\vv w}) \exp(\bar{\vv h}) z, \]
where $\bar{\vv w} \in B_{\fr_1 + \fr_2}(2.21\beta e^{-s'})$ and $\bar{\vv h} \in \fq^{A_0 H}(2s', 2.21\beta)$. By rearranging the equation above, we get 
\[ z' =  \exp(b \fa + b_0 \fa_0 + r^\ast \fu^\ast )\exp(\vv w'') u(r) z, \]
where $b, b_0, r \in [2.22\beta]$, $r^\ast \in [2.22\beta e^{-2s'}]$, and $\vv w'' \in B_{\fr_1 + \fr_2} (2.22\beta e^{-s'})$. 
By Lemma \ref{lm:different-coordinate-1}, we have 
\[ \exp(\vv w'' ) = \exp(\tilde b \fa + \tilde b_0 \fa_0 + \tilde r^\ast \fu^\ast) \exp(\vv w^-) \exp(\vv w^+) u(\tilde r), \]
where $\vv w^- \in B_{\vv r^-}(2.3 \beta e^{-s'})$, $\vv w^+ \in B_{\vv r^+}(2.3 \beta e^{-s'})$, $\tilde b, \tilde b_0 , \tilde r^\ast, \tilde r \in [0.1 \beta e^{-2s'}]$. Thus, 
\begin{equation}
\label{eq:z-z'}
z' = a_0 a u^\ast (\bar r^\ast) \exp(\vv w^-) \exp(\vv w^+) u(\bar r) z, \end{equation}
where $a_0 a \in B^{A_0 A}(3.1 \beta)$, $\bar r^\ast \in [3.1 \beta e^{-2s'}]$, $\bar r \in [3.1 \beta]$, $\vv w^- \in B_{\vv r^-}(2.3 \beta e^{-s'})$, and $\vv w^+ \in B_{\vv r^+}(2.3 \beta e^{-s'})$. Let us consider $\bP(z)$ and $\bP(z')$. From \eqref{eq:z-z'}, we have that
\[ \bP(z) \in \bP(z') + \left[3.1\beta\right] \times \left[2.3 \beta e^{-s'}\right]^2. \]
From the definition of $\cI$, we have that there are at most $N_2$ different $z \in \cI$ satisfying the condition above.

\end{proof}
\begin{lemma}
    \label{lm:partition-2}
For any $z \in \cI$,
\[ \Theta'(z) = \bigcup_{y \in \cI_z} \Omega'(y). \]
Moreover, for any $y' \in \Theta(z)$, there are at most $N_2$ different $y \in \cI_z$ such that 
\[ \Omega(y) \cap \Omega(y') \neq \emptyset. \]
\end{lemma}
\par Before proving Lemma \ref{lm:partition-2}, let us introduce a coordinate system in $\Theta(z)$:
\begin{defn}
\label{def:coordinate-theta}
\par  Given $z \in \cI$ and $ y \in \Theta'(z)$, let us define the coordinate of $y$ in $\Theta'(z)$, denoted by $\fD_z(y) \in \R^2$ as follows: Let us write
\[ y = a_0 a u(r_y) \exp(\vv w^+) \exp( \vv w^-)u^\ast (r^\ast_y) z, \]
where $a_0 a \in A_0 A$, $\vv w^+ \in \vv r^+$ and $\vv w^- = v_2 \vv v_2 + w_2 \vv w_2 \in \vv r^-$. Then we define 
\[ \fD_z(y) := (v_2, w_2). \]
\end{defn}
\par Let us prove Lemma \ref{lm:partition-2}:
\begin{proof}[Proof of Lemma \ref{lm:partition-2}]
\par We first prove that for any $z' \in \Theta'(z)$, there exists $y \in \cI_z$ such that $z' \in \Omega(y)$. By Lemma \ref{lm:two-theta}, $\Theta_{s'} \subset \hat{\Theta}_{s'}$. Therefore, we can write
\[ z' =  a_0 a u(r) \exp(\vv w^+ + \vv w^-)u^\ast (r^\ast)  z, \]
where $a_0 a \in B^{A_0 A} (1.1 \beta)$, $r \in [1.1\beta]$, $r^\ast \in [1.1 \beta e^{-2s'}]$, $\vv w^+ \in B_{\vv r^+}(1.1 \beta e^{-s'})$ and $\vv w^- = v_2 \vv v_2 + w_2 \vv w_2 \in B_{\vv r^-}(1.1\beta e^{-s'})$. 

 Let $y \in \cI_z$ such that 
\[ y = \exp(\bar{\vv w}^-) z, \]
with $\| \bar{\vv w}^- - \vv w^- \|\le 0.2 \beta e^{-3s'}$. 
\par We will prove that $z' \in \Omega(y)$. In fact, we have 
\begin{equation}
\label{eq:z'-y}
z' = a_0 a u(r) \exp(\vv w^+ + \vv w^-) \exp(-\bar{\vv w}^-) u^\ast (r^\ast)y. \end{equation}
By Lemma \ref{lm:product-w}, we have that 
\[  \exp(\vv w^+ + \vv w^-) \exp(-\bar{\vv w}^-) = \exp(\vv w^+ + \vv w^- - \bar{\vv w}^- + \tilde{\vv w} ) \exp(\tilde{\vv h} ), \]
where $ \tilde{\vv w} \in B_{\fr_1 + \fr_2}(0.1\beta e^{-3s'})$, and $\tilde{\vv h} \in B_{\R \fa_0 + \fh}(0.1\beta e^{-2s'})$. 
Let us denote 
\[ \vv w =  \vv w^+ + \vv w^- - \bar{\vv w}^- + \tilde{\vv w} \in \fr_1 + \fr_2,\]
and write
\[\exp(\vv w^+ + \vv w^-) \exp(-\bar{\vv w}^-) =  \exp(\vv w ) \exp( \tilde{\vv h}). \]
 Then the above estimates imply that $\|p_{-} (\vv w)\| \le \beta e^{-3s'}$. By plugging the above equation into \eqref{eq:z'-y}, we get that $z' \in \Omega(y)$. 
\par Let us prove that there are at most $N_2$ different $y \in \cI_z$ such that 
\[ \Omega(z') \cap \Omega(y) \neq \emptyset. \]
Let us take 
\[ y' \in \Omega(z') \cap \Omega(y). \]
Then 
\[ y' = \exp(\vv w^+ + \vv w^-) \exp(\vv h) y = \exp(\tilde{\vv w}^+ + \tilde{\vv w}^-) \exp(\vv h') z', \]
where $$\vv w^+, \tilde{\vv w}^+ \in B_{\vv r^+}\left(2\beta e^{-s'}\right),$$ 
$$\vv w^-, \tilde{\vv w}^- \in B_{\vv r^-}\left(2\beta e^{-3s'}\right),$$ 
and 
$$\vv h, \vv h' \in \fq^{A_0 H}\left(2s', 2\beta\right).$$
Then
\[ z' =  \exp(-\vv h') \exp( - \tilde{\vv w}^+ - \tilde{\vv w}^-) \exp(\vv w^+ + \vv w^-) \exp(\vv h) y.\]
By Lemma \ref{lm:product-w}, 
\[ \exp( - \tilde{\vv w}^+ - \tilde{\vv w}^-) \exp(\vv w^+ + \vv w^-) = \exp(\tilde{\vv w}^+ + \tilde{\vv w}^- ) \exp( \tilde{\vv h}), \]
where $\tilde{\vv w}^+ \in B_{\vv r^+}(4.1 \beta e^{-s'})$, $\tilde{\vv w}^- \in B_{\vv r^-}(4.1 \beta e^{-3s'})$, 
$\tilde{\vv h} \in B_{\R \fa_0 + \fh} (0.1 \beta e^{-2s'})$. Thus,
\[ z' = \exp(-\vv h') \exp(\tilde{\vv w}^+ + \tilde{\vv w}^-) \exp(\tilde{\vv h}) \exp(\vv h)y. \]
By rearranging the above equation, we have 
\[
z' = a_0 a u(r) \exp( \vv v^+ + \vv v^- ) u^{\ast}(r^\ast) y, \]
where $a_0 a \in B^{A_0 A}(2.3\beta)$, $r \in [2.3 \beta]$, $r^\ast \in [2.3 \beta e^{-2s'}]$, $\vv v^+ \in B_{\vv r^+} (4.3\beta e^{-s'})$, and $\vv v^-\in B_{\vv r^-} (4.3\beta e^{-3s'})$. By Lemma \ref{lm:different-coordinate-1}, 
\[  \exp( \vv v^+ + \vv v^- ) = \tilde{a}_0 \tilde{a} u(\tilde r) \exp(\tilde{\vv v}^+)\exp(\tilde{\vv v}^-) u^\ast(\tilde{r}^\ast), \]
where $\tilde{a}_0 \tilde{a} \in B^{A_0 A}(0.1 \beta e^{-2s'})$, 
$\tilde r, \tilde{r}^\ast \in [0.1\beta e^{-2s'}]$, $\tilde{\vv v}^+ \in B_{\vv r^+}(4.4 \beta e^{-s'})$, 
and $\tilde{\vv v}^- \in B_{\vv r^-}(4.4 \beta e^{-3s'})$. This implies that 
\[ z' = a_0 a u(r)\tilde{a}_0 \tilde{a} u(\tilde r) \exp(\tilde{\vv v}^+)\exp(\tilde{\vv v}^-) u^\ast(\tilde{r}^\ast) u^{\ast}(r^\ast) y. \]
Considering the coordinates of $z'$ and $y$ in $\Theta(z)$, we have 
\[ \left\| \fD_z(z') - \fD_z(y) \right\| = \|\vv v^-\| \le 4.4 \beta e^{-3s'}. \]
This shows that there are at most $N_2$ different $y \in \cI_z$ satisfying this condition. 
\par This completes the proof.
\end{proof}
\begin{remark}
    \label{rmk:partition-2}
    \par Let us denote 
    \begin{equation}
    \label{eq:fC}
    \fC := \left[1.2 \beta e^{-s'}\right]^2.\end{equation}
    From Lemma \ref{lm:partition-2}, we can see that for any $z' \in \Theta(z)$, 
    \[ \fD_z(z') \in \fC. \]
\end{remark}
\begin{lemma}
  \label{lm:wv2r}
  \par Suppose 
  \[ y_2 = \exp(\vv w^-) \exp(\vv w^+) u(r_0) y_1,\]
  where $|r_0| \le \beta e^{2s'}$, $\vv w^- = v_2 \vv v_2 + w_2 \vv w_2 \in B_{\vv r^-} (\beta e^{-s'})$, and $\vv w^+ = v_1 \vv v_1 + w_1 \vv w_1 \in B_{\vv r^+} (\beta e^{s'})$.
  For $r \in [\beta e^{2s'}]$, let us write $u(r) y_2$ as 
  \[ u(r) y_2 = a a_0 u^\ast \exp(\vv w^-(r)) \exp(\vv w^+(r)) u(f(r)) y_1, \]
  where $a \in A$, $a_0 \in A_0$, $u^\ast \in U^\ast$, $\vv w^+ (r) \in \vv r^+$, and 
  \[ \vv w^-(r) = v_2(r) \vv v_2 + w_2 (r) \vv w_2 \in \vv r^-. \]
  Then 
  \begin{equation}
    \label{eq:v2r}
    v_2(r) = \frac{2 v_2}{2 - r w_2 v_2},
  \end{equation} 
  and 
  \begin{equation}
    \label{eq:w2r}
    w_2(r) = \frac{2 w_2}{2 + r w_2 v_2}.
  \end{equation} 
  In particular, for any $r' \in [\beta e^{2s'}]$, if we denote $y'_1 = u(r') y_1$, $y'_2 = u(r) y_2$, we have 
  \[ \|\Delta(y'_1, y'_2)\| \asymp \|\Delta(y_1, y_2)\|. \]
\end{lemma}
\begin{proof}
  This can be verified by direct calculation.
\end{proof}

\par The following lemma tells that $\Omega(y)$ is invariant along the $U$-orbits, modulo necessary time change:
\begin{lemma}
\label{lm:omega-u}
Let $\tilde \mu$ and $\Pi(x)$ be as in Definition \ref{def:kakeya-model}.  Let us denote 
  \begin{equation} 
  \label{eq:hat-omega}
  \hat{\Omega}_{s'} := Q^{s'}_{3s'}(2.1\beta) Q^{A_0 H} (2s', 2.1\beta). \end{equation}
 For $y \in \fJ$, let us denote 
 \begin{equation} 
 \label{eq:u-omega}
 U\Omega'(y) : = \left\{ u(r') y' : y' \in \Omega'(y), r' \in \fl(y') \right\}. \end{equation}
 For $r \in \fl(y)$, let us denote 
 \begin{equation} 
 \label{eq:omega-y-r}
 \Omega(y, r) := U\Omega'(y) \cap \Sigma^{\tilde{r}}, \end{equation}
 where $\tilde r$ satisfies that $u(r)y \in \Sigma^{\tilde{r}}$. Then 
 \[ \Omega(y, r) \subset \hat{\Omega}(u(r)y) := \hat{\Omega}_{s'} u(r) y. \]
 In particular, we have
 \[ \tilde \mu(\Omega'(y)) \ll \tilde \mu \left(\hat{\Omega} (u(r) y)\right). \]
\end{lemma}
\begin{proof}
  \par For $y' \in \Omega'(y)$, we have 
  \[ y = \exp(\vv w^+ + \vv w^-) \exp(\vv h) y', \]
  where $\vv w^+ \in B_{\vv r^+}(2.01\beta e^{-s'})$, $\vv w^- \in B_{\vv r^-}(2.01\beta e^{-3s'})$, and 
  $\vv h \in \fq^{A_0 H}(2s', 2\beta)$. Then
  \begin{align*}
      u(r) y &= u(r) \exp(\vv w^+ + \vv w^-) \exp(\vv h) y' \\ 
             &= \exp(\Ad_{u(r)} (\vv w^+ + \vv w^-)) u(r)\exp(\vv h) y' \\
             &= \exp(\vv w^+ + \tilde{\vv w}^+ + \vv w^-) \exp(\tilde{\vv h}) u(r') y',
  \end{align*} 
where $\tilde{\vv w}^+ \in \vv r^+$ with $\|\tilde{\vv w}^+\| = |r| \|\vv w^-\|$, and $\tilde{\vv h} \in \fq^{A_0 H}(2s', 2.01\beta)$. Note that $|r| \le \beta e^{2s'}$, and $\|\vv w^-\| \le 2.01 \beta e^{-3s'}$. We have $\|\tilde{\vv w}^+\| \le 0.01 \beta e^{-s'}$. This implies that 
\[ u(r') y' \in \hat{\Omega}(u(r) y).  \]
Also note that $u(r') y'$ and $u(r) y$ are in the same cross-section $\Sigma^{\tilde{r}}$. By Lemma \ref{lm:cross-section}, $u(\fl(y'))y' \cap \Sigma^{\tilde{r}}$ is contained in an interval of length $\asymp \beta$. This implies that for any $y' \in \Omega(y)$,
\[ u(\fl(y'))y' \cap \Sigma^{\tilde{r}} \subset \hat{\Omega}(u(r) y). \]
Thus, we have 
\[ \Omega(y, r) \subset \hat{\Omega}(u(r) y). \]
Note that $\Omega'(y)$ and $\Omega(y, r)$ are both cross-sections of $U\Omega'(y)$ along $U$-direction. Thus, we have
\[ \tilde \mu(\Omega(y,r)) \asymp \tilde \mu(\Omega'(y)) \asymp e^{-2s'} \tilde \mu(U\Omega'(y)). \]
This implies that 
\[ \tilde \mu(\Omega(y)) \ll \tilde \mu\left(\hat{\Omega}(u(r) y)\right).\]

\end{proof}
\begin{lemma}
  \label{lm:time-leave}
  \par Let $s' >0$ and $\Pi(x) $ be as in Definition \ref{def:kakeya-model}. Let us fix a cross-section $\Sigma^r \subset \Pi(x)$ defined as above. Then for any $y_1, y_2 \in \fJ \cap \Sigma^r$, suppose there exist $L_1, L_2$ and $z \in \cI$ such that 
  \[ \Omega(y_i, L_i) \cap \Theta'(z) \neq \emptyset, \text{ for } i=1,2, \]
  where $\Omega(y_i, L_i)$ is defined as in \eqref{eq:omega-y-r}.
  \par  Let us denote $\tilde{y}_i = u(L_i) y_i$ and write 
  \[ \tilde{y}_1 = a_0 a u^\ast(r^\ast) \exp(\vv w^-) \exp(\vv w^+) u(r_0) \tilde{y}_2,\]
  where $a_0 a \in A_0 A$,
  $$\vv w^- = v_2 \vv v_2 + w_2 \vv w_2 \in \vv r^-,$$
  and 
  $$\vv w^+ = v_1 \vv v_1 + w_1 \vv w_1 \in \vv r^+.$$
  Then $\vv w^+ \in B_{\vv r^+}(4.6 \beta e^{-s'})$. Moreover, let 
  \begin{equation}
  \label{eq:Iy1y2}
  I (y_1, y_2) := \left\{ r_1 \in \fl(y_1) : \exists r_2 , z' \in \cI,  \text{ s.t. }  \Omega (y_i, r_i) \cap \Theta'(z')\neq \emptyset, i=1,2 \right\}.\end{equation}
  Then there exists an interval $I$, such that $ I(y_1, y_2) \subset I$  with 
  $$|I| \asymp \min\left\{ \beta e^{2s'}, \beta e^{-s'} \|\vv w^-\|^{-1}\right\}.$$
\end{lemma}
\begin{proof}
\par By Lemma \ref{lm:omega-u}, we have $\Omega(y_i, L_i) \subset \hat{\Omega}(\tilde{y}_i)$. Therefore, we have $ \hat{\Omega}(\tilde{y}_1) \cap \Theta(z) \neq \emptyset$ which implies that 
\[ \tilde{y}_1 \in \hat{\Omega}_{s'}^{-1} \Theta_{s'} z \subset \Theta_{s'}^{-1} \Theta_{s'} z.  \]
Similarly, we have $ \hat{\Omega}(\tilde{y}_2) \cap \Theta(z) \neq \emptyset$ implies that 
\[ z \in \Theta_{s'}^{-1} \Omega_{s'} \tilde{y}_2 \subset \Theta_{s'}^{-1} \Theta_{s'} \tilde{y}_2.  \]
Therefore, 
\[ \tilde{y}_1 \in \Theta_{s'}^{-1} \Theta_{s'}\Theta_{s'}^{-1} \Theta_{s'} \tilde{y}_2. \]
By Corollary \ref{cor:product-theta},
\[ \tilde{y}_1 \in  \tilde{\Theta}_{s'} \tilde{y}_2.\]
Then we have
\[ \tilde{y}_1 = \tilde{a}_0 \tilde{a} u^{\ast}(\tilde{r}^\ast) \exp(\tilde{\vv w}^+ + \tilde{\vv w}^- ) u(\tilde{r}) \tilde{y}_2,  \]
where $\vv w^+ \in B_{\vv r^+}(4.5 \beta e^{-s'})$. By Lemma \ref{lm:different-coordinate-1}, if we write 
\[ \tilde{y}_1 = a_0 a u^\ast(r^\ast) \exp(\vv w^-) \exp(\vv w^+) u(r_0) \tilde{y}_2, \]
we will have $\|\vv w^+ - \tilde{\vv w}^+\| \le 0.01\beta e^{-3s'}$. This implies that $\vv w^+ \in B_{\vv r^+}(4.6 \beta e^{-s'})$.

  \par For any $r_1 \in \left[2\beta e^{2s'}\right]$, we have that 
  \begin{align*} 
    u(r_1) \tilde{y}_1 &=u(r_1) a a_0 u^\ast(r^\ast) \exp(\vv w^-) \exp(\vv w^+) u(r_0) \tilde{y}_2 \\
                &= a' a'_0 u^{\ast}_1 u(r'_1) \exp(\vv w^-) \exp(\vv w^+) u(r_0) \tilde{y}_2
  \end{align*} 
  where $a' a'_0 \in B^{A_0 A}(\beta)$, $u_1^{\ast} \in B^{U^\ast}(\beta e^{-2s'})$, and $|r_1 - r'_1| \le \beta |r_1|$.
  By \eqref{eq:fvy}, \eqref{eq:fwy}, and \eqref{eq:fy}, we have that 
  \begin{align*} 
    u(r_1) \tilde{y}_1 &= \bar a \bar a_0 \bar u^{\ast }  \exp(\vv w^-(r'_1)) \exp(\vv w^+(r'_1)) u(f(r'_1)) \tilde{y}_2
  \end{align*} 
  with $\vv w^+ (r'_1) = f_v(r'_1) \vv v_1 + f_w(r'_1) \vv w_1$, where 
  \[ f_v(r'_1) = v_1 + \frac{2v_2 r'_1}{2 + r'_1 v_2 w_2}, \]
  and 
  \[ f_w(r'_1) = w_1 + \frac{-2 w_2 r'_1}{2 - r'_1 v_2 w_2}. \]
By our previous argument we can see that $r_1 \in I(y_1, y_2)$ implies that $\|\vv w^+ (r'_1)\| \le 4.6\beta e^{-s'}$.
\par Since 
\[ \left| \frac{2v_2 r'_1}{2 + r'_1 v_2 w_2} - v_2 r'_1 \right| \le \beta |v_2 r'_1|. \]
If $|f_v(r'_1)| \le 4.6\beta e^{-s'}$, then $|v_2 r'_1| \le 10 \beta e^{-s'}$ 
implying that $|r'_1| \le 10 \beta e^{-s'} |v_2|^{-1}.$
\par By repeating the argument to $f_w(r'_1)$, we have that $|f_w(r'_1)| \le 4.6\beta e^{-s'}$ implies that $|r'_1| \le 10 \beta e^{-s'}|w_2|^{-1}$.
\par This completes the proof.
\end{proof}
\par Recall that in Definition \ref{def:coordinate-theta}, we define a coordinate system in $\Theta'(z)$ for any $z \in \cI$. By Remark \ref{rmk:partition-2}, for any $z' \in \Theta'(z)$, its coordinate $\fD_z(z')$ is contained in 
\[ \fC = \left[1.2 \beta e^{-s'}\right]^2.\]
The following lemma is crucial for us to study the structure of the Kakeya-type model:
\begin{lemma}
  \label{lm:concentrated-surface}
  \par Recall that $I(y_1, y_2)$ is defined as in \eqref{eq:Iy1y2}. Let us fix a cross-section $\Sigma^r$. Let $z \in \cI\cap \Sigma^r, y \in \fJ\cap \Sigma^r$. Then the set of all coordinates $\fD_z (\bar z)$ of the elements $\bar z \in \cI_z$ such that $I(\bar z , y) \neq \emptyset$ and in addition there is some $r \in I(\bar z, y)$ with $|r| \ge \beta^{100} e^{2s'}$ is contained in the $ \beta^{-100} e^{-3s'}$ neighborhood of the hyperbola
  \begin{equation}
  \label{eq:hyperbola}
  \left\{ (X, Y) \in \fC:  (X - X_0)(Y-Y_0) + \frac{X - X_0}{v_1} + \frac{Y - Y_0}{w_1} = 0  \right\}, 
  \end{equation}
  where $v_1, w_1, X_0, Y_0$ are constants depending on $y, z$ with bounds 
  $(X_0, Y_0) \in \fC$ and $(v_1, w_1) \in [\beta e^{s'}]^2$.  The set of such coordinates $\fD_z(\bar z)$ is also contained in the box
  \begin{equation}
  \label{eq:hyperbola-box}
  \fB(\cT(y), z) := (X_0, Y_0) + \left[\beta^{-100} e^{-2s'} \|(v_1, w_1)\|\right]^2.\end{equation}
  Moreover, if there exists $\bar z \in \cI_z$ such that there is some $r \in I(\bar z, y)$ with $|r| \ge \beta^{100} e^{2s'}$ and 
  \[ \|\Delta(\bar z , y)\| \le \beta e^{-s'-j}, \]
  then 
  \[ \|(v_1, w_1)\| \le \beta^2 e^{s' -j},\]
  and the size of $\fB(\cT(y), z)$ is $\le \beta^{-98} e^{s'-j}$.
\end{lemma}

\begin{proof}
\par There exist $y' \in \Omega(y)$ and $z' \in \Theta(z)$ such that 
  \[ z' = \exp(\vv w^+) u(r_{y'}) y',\]
  where $\vv w^+ =  v_1 \vv v_1 + w_1 \vv w_1 $, and 
  \[ z' = \exp(\vv w_0^-) z, \]
  where $\vv w_0^- = X_0 \vv v_2 + Y_0 \vv w_2$. We can also make sure that 
  \[ \min\{|v_1|, |w_1|\} \ge \beta e^{-s'}. \]
  \par  Without loss of generality, let us assume $|w_1| \ge |v_1|$. The opposite case can be handled similarly.
\par Suppose $\bar z = \exp(\vv w^-) z' $ (where $\vv w^- = v_2 \vv v_2 + w_2 \vv w_2$) 
satisfies that $I(\bar z, y) \neq \emptyset$. Then by \eqref{eq:fvy}, \eqref{eq:fwy}, and \eqref{eq:fy}, we have that 
\[ u(r) \bar z = a_{0,r} a_r u^{\ast} (f^\ast(r)) \exp(\vv f_-(r)) \exp(\vv f_+(r)) u(f(r))y',  \]
where $\vv f_+(r) =  f_v(r) \vv v_1 + f_w (r) \vv w_1$, 
\[f_v(r) = \frac{2r v_{2}}{2 + r v_{2} w_{2}} + v_{1},\]
and 
\[ f_w(r)  = \frac{-2r w_{2}}{2 - r v_{2} w_{2}} + w_{1}.\]
By Lemma \ref{lm:time-leave}, $ r \in I(\bar z, y)$ implies that $|f_v(r)|, |f_w(r)| \le 4.6 \beta e^{-s'}$. 
Let us denote $\Delta = v_2 w_2/2$. Then we have 
\begin{equation}
  \label{eq:2}
  r^{-1} + \Delta = - \frac{v_2}{v_1} (1+ O(\beta e^{-s'}v_1^{-1})),\end{equation}
and
\begin{equation} 
  \label{eq:1}
  r^{-1} - \Delta = \frac{w_2}{w_1} ( 1+ O(\beta e^{-s'} w_1^{-1}) ).\end{equation}
Note that $|v_1| \ge \beta e^{-s'}$. So we have 
\[ |\beta e^{-s'} v_1^{-1}| \le 1.\]
Note that 
\begin{equation} 
\label{eq:3}
|r^{-1}| \ge \beta^{-1} e^{-2s'} \ge \beta^2 e^{-2s'} \ge |\Delta|.\end{equation}
Since $|r| \ge \beta^{100}e^{2s'}$, we get 
\begin{equation}
  \label{eq:control-w2}
   |v_2/v_1|, |w_2/ w_1| \le \beta^{-100}e^{-2s'}.\end{equation}
   This implies that 
   $$\|(v_2, w_2)\| \le  \beta^{-100}e^{-2s'} \|(v_1, w_1)\| .$$ 
   Moreover, \eqref{eq:1} and \eqref{eq:2} imply that 
\[ v_2 w_2 + v_1^{-1} v_2 + w_1^{-1} w_2 = O(\beta^{-99} e^{-3s'} v_1^{-1}) + O(\beta^{-99} e^{-3s'} w_1^{-1}).  \]
 This implies that the coordinate of $\bar z$, $\fD_z(\bar z) =(v_2 + X_0, w_2+ Y_0)$ is contained in $\beta^{-100} e^{-3s'}$ neighborhood of the hyperbola 
\[  (X- X_0) (Y- Y_0) + v_1^{-1} (X- X_0) + w_1^{-1} (Y- Y_0) = 0, \]
and also contained in $(X_0, Y_0) + \left[\beta^{-100} e^{-2s'} \|(v_1, w_1)\|\right]^2$.
\par Now let us assume that $\|\Delta(\bar z , y)\| \le e^{-s'-j}$. Then we have 
\[ \bar z =  \exp(\vv w^-) \exp(\vv w^+) u(r_{y'}) y'\]
where $\vv w^- = v_2 \vv v_2 + w_2 \vv w_2 \in B_{\vv r^-}(e^{-s'-j})$. Then by \eqref{eq:2}, \eqref{eq:1}, and \eqref{eq:3}, we have that 
\[ |v_2/v_1|, |w_2/ w_1| \ge \beta^{-1}e^{-2s'}. \]
This implies that 
\[ \|(v_1, w_1)\| \le \beta e^{2s'} \|(v_2, w_2)\| \le \beta^2 e^{s' -j}. \]
By \eqref{eq:hyperbola-box}, we have that the size of $\fB(\cT(y), z)$ is $\le \beta^{-98} e^{s'-j}$.
This completes the proof.
\end{proof}
\begin{remark}
  \label{rmk:concentrated-surface}$\quad$
  
\begin{enumerate}
 \item  Note that $I(y_1, y_2)$ will change if we replace $y_1$ with $y'_1 \in \Omega(y_1)$. However, there exists a one-to-one map 
\[ P_{y_1, y'_1}: I(y_1, y_2) \to I(y'_1, y_2) \]
such that $|P_{y_1, y'_1} (r') - r'| \le \beta |r'|$ for every $r' \in I(y_1,y_2)$.
\item\label{item:line} If $|v_1/w_1| \le  e^{-s'}$ or $|w_1/v_1| \le  e^{-s'}$, then the neighborhood described in~\eqref{eq:hyperbola} agrees with the $ \beta^{-100} e^{-3s'}$ neighborhood of the straight line
  \[  v_1^{-1}(X - X_0) + w_1^{-1}(Y - Y_0) = 0.\] 
  \end{enumerate}
\end{remark}

\medskip

\par Let us denote the $\beta^{-100} e^{-3s'}$-neighborhood of the curve \eqref{eq:hyperbola} by \[\cC(\cT(y), z) \subset \fC.\]
By misuse of notation, we will write $\Omega (\bar z) \subset \cC(\cT(y), z)$ when we actually mean that $ \fD_{ z}(\bar z) \in \cC(\cT(y), z)$.  Also, note that if $\fD_{z} (\bar z) \in \cC(\cT(y), z)$ but 
\[ \fD_z(\bar z ) \not\in \fB(\cT(y), z), \]
then there does not exist $ r' \in I( \bar z, y)$ with $r' \ge \beta^{100} e^{2s'}$.
\section{Dimension Improvement --- unstructured part}
\label{sec-dimension-improvement-unstructured}
\par Recall that our goal is the following proposition:

\begin{prop}
  \label{prop:improve-dimension}
  \par  Suppose that $\cF_i$ is $(d_i, s_i)$-good.
  Let $\mu = a(s_i)_\ast\mu_{\cF_i}$. Let us fix a full playground $\Pi(x)$ and denote $\tilde \mu = \mu_{\Pi(x)}$.
  Then $\tilde \mu$ admits the following decomposition:
  \[ \tilde \mu = \nu_{\cR_i} \tilde \mu_{\cR_i} + \nu_{i+1} \tilde \mu_{i+1}, \]
  such that  $\nu_{\cR_i} \le \beta^{20}$, and $\mu_{i+1}$ is $(d_i+\epsilon_1, s_{i+1})$-good. Recall that $s_{i+1} 
  = s_i/2$.
\end{prop}

\noindent
The proof of this proposition occupies the rest of this section as well as \S \ref{sec-no-loss-of-dimension} and \S \ref{sec-structured-component}.

\medskip

\par Let us define the heavy part of $\Pi(x)$:
\begin{defn}
\label{def:heavy-part}
    \par For $z \in \cI$, we call $\Theta'(z)$ a \emph{heavy neighborhood} if 
\[ \tilde \mu (\Theta'(z)) > e^{-2s'} e^{- (d_i + \epsilon_1) s'}.  \]
$\Theta'(z)$ a \emph{extremely heavy} if 
\[ \tilde \mu (\Theta'(z)) > e^{-2s'} e^{- (d_i -C_1 \epsilon_1) s'}.  \]
\end{defn}
\begin{defn}
\par Let $\cR_i \subset \Pi(x)$ denote the union of heavy neighborhoods. Then $\cR_i$ will be said to be the \emph{heavy part} of $\Pi(x)$.
\end{defn}
\par To prove Proposition \ref{prop:improve-dimension}, it suffices to show that 
\begin{equation}
    \label{eq:heavy-part-is-small}
    \tilde \mu(\cR_i) \le \beta^{20}.
\end{equation}
 In fact, if this is the case, we can put $\tilde \mu_{\cR_i} = \tilde{\mu}^{-1}(\cR_i)\tilde{\mu}|_{ \cR_i}$, and $\tilde \mu_{i+1}=  \tilde \mu^{-1}(\Pi(x)\setminus\cR_i)\tilde \mu_{\fr}|_{\Pi(x)\setminus\cR_i}$. Then since $\Pi(x)\setminus\cR_i$ does not have any heavy neighborhood, we have for any $z \in \cI$
\[ \tilde \mu_{i+1}(\Theta'(z)) \le e^{-2s'} e^{- (d_i + \epsilon_1) s'}. \]
Since every neighborhood of the form $Q_{s'}(\beta) B^{A_0 H}(\beta) y$ contains at most $e^{2s'}$ different $\Theta'(z)$'s, we conclude that $\tilde \mu_{i+1}$ is $(d_i + \epsilon_1, s')$-good, which is what we wanted.
\par For the rest of the proof, we will focus on proving \eqref{eq:heavy-part-is-small}.
\par We first prove the following lemma:
\begin{lemma}
    \label{lm:omega-upper}
    Let us denote $s' = s_{i+1}$.  For any 
 $\Omega(y) = \Omega_{s'} y$, we have 
 \begin{equation} 
\label{eq:upper-omega}
\tilde \mu (\Omega'(y)) \le \beta^{-15}e^{-2 s'} e^{-2 d_i s'}.  \end{equation}
\end{lemma}
\begin{proof}
First note that 
 \[ \mu(\Omega(y))  = \mu_{\cF_i}(a(-2s') \Omega(y)),\]
 and
 \[ a(-2s') \Omega(y) = Q_{2s'}(2\beta) R^{A_0 H}(2s', 2\beta) y' \]
 where $y' = a(-2s') y$.
 Noting that $\mu_{\cF_i}$ is a normalized measure on $U$-orbits, we have 
 \[ \mu_{\cF_i} (Q_{2s'}(2\beta) R^{A_0 H}(2s', 2\beta) y') \le e^{-2s'}  \mu_{\cF_i} (Q_{2s'}(2\beta) B^{A_0 H }(2\beta) y'). \]
 By our hypothesis that $\mu_{\cF_i}$ is $(d_i, 2s')$-good, we have
 \[ \mu_{\cF_i} (Q_{2s'}(2\beta) B^{A_0 H }(2\beta) y') \le e^{-2d_i s'}, \]
 which implies the claim.
\par Since $\Pi(x)$ is a full playground, we have that for any $y \in \fJ$, \eqref{eq:upper-omega} holds.
\end{proof}
\par Let us further decompose $\cR_i$ as follows:
\begin{defn}
    \label{def:heavy-part-decomposition}
    \par For integer $j \ge 1$, let us denote
\[ \Xi_j(z') := \Xi_{s',j} z' \cap \Theta'(z'), \]
where 
\[ \Xi_{s', j} := Q^{s'}_{s' + j}Q^{A_0 H}(2s', 1.1\beta).\]
\par Given a heavy neighborhood $\Theta' (z)$, we call $\Theta'(z)$ \emph{strange} if there exists $ \Xi_{C_1 \epsilon_1 s'}(z') \subset \Theta'(z)$ such that 
\[ \tilde \mu_{\fr}(\Xi_{C_1 \epsilon_1 s'}(z')) \ge (1-\beta) \tilde \mu_{\fr}(\Theta'(z)). \]
Otherwise, we call it a \emph{normal neighborhood}. Let $\bar \cR_i \subset \cR_i$ denote the union of all the heavy neighborhoods that are also normal. Then we have the following decomposition:
\begin{equation}
    \label{eq:strange-normal-decomposition}
    \cR_i = \bar \cR_i \cup \bigcup_{j = C_1 \epsilon_1 s'}^{2s'} \cR_{i,j},
\end{equation}
where $\cR_{i,j}$ denotes the union of heavy neighborhoods $\Theta'(z)$ such that there exists $\Xi_j(z') \subset \Theta’(z)$ satisfying 
\begin{equation} 
\label{eq:strange-nbhd-condition}
\tilde \mu_{\fr}(\Xi_j(z')) \ge (1-\beta)^{j} \tilde \mu_{\fr}(\Theta'(z)), \end{equation}
and 
\begin{equation} 
\label{eq:strange-nbhd-condition-2}
\tilde \mu_{\fr}(\Xi_{j+1}(z'')) < (1-\beta) \tilde \mu_{\fr}(\Xi_j(z')) \end{equation}
for any $\Xi_{j+1}(z'') \subset \Xi_j(z')$. Here $j \le 2s'$ because for $j > 2s'$, 
\[ \Xi_j (z') \subset \Omega(z'), \]
and by \eqref{eq:upper-omega}, 
\[ \tilde \mu (\Omega(z')) \le \beta^{-15}e^{-2 s'} e^{-2 d_i s'}, \]
which implies (since $\Theta'(z)$ is heavy) that \eqref{eq:strange-nbhd-condition} can not hold for $\Xi_j(z')$ and~$\Theta'(z)$.
\end{defn}

\begin{defn}
  \label{def:structure-randomness}
  \par Let $s'>0$, $\Pi(x)$ and $\tilde \mu$ be as in Definition \ref{def:kakeya-model}. To fit our case, let us assume that for any $y \in \fJ$, 
  $$\cW(y) = \tilde \mu(\Omega'(y)) \le e^{-2 d_i s' } e^{-2s'}.$$
  By Lemma \ref{lm:concentrated-surface}, for every $y \in \fJ$ and $z \in \cI$ from the same cross-section $\Sigma^r$, there is a corresponding neighborhood $\cC(\cT(y), z) \subset \fC$.
  \par Let us consider the summation 
  \[ \fS(\cT(y), z) := \sum_{\substack{ y' \in \cI_z \\ \fD_z(y') \in \cC(\cT(y), z)\cap \fB(\cT(y), z) }} \cW(y'). \]
  Let $j$ be so that the size of the sides of $\|(v_1, w_1)\|$ are in $[e^{s'-j}, e^{s'-j+1}]$ (cf.~Lemma~\ref{lm:concentrated-surface}, particularly equation \eqref{eq:hyperbola-box}). Then it is easy to see that
  \begin{equation}
  \label{eq:fScT}
  \fS(\cT(y), z) \le N_2 \beta^{-100} \beta^{-100}e^{2s' - j}  e^{-2 d_i s' } e^{-2s'} \le  \beta^{-201} e^{-2 d_i s' - j},
  \end{equation}
  where $N_2$ is a large numeric constant (in \S\ref{subsec-constants},it was chosen to be $10^6$). The curve neighborhood
  $\cC(\cT(y), z)$ is called \emph{highly concentrated} if 
  $$ \fS(\cT(y), z) \ge  e^{-2 d_i s' -7C_1\epsilon_1 s'}.$$
  
\end{defn}

\begin{remark}
    Note that $\fS(\cT (y), z) $ is essentially the $\tilde\mu$-measure of a subset of $\Theta'(z)$ with $r$ so that $z,y \in \Sigma^r$ (in particular, up to edge effects, $\Theta'(z)$ is a subset of $\Sigma^r$). Thus 
    \[
    \fS(\cT(y),z) \lessapprox \tilde\mu(\Sigma^r) \leq e^{-2s'}.
    \]
    Thus, if $d_i<1$ there can be no highly concentrated curve neighborhoods. Moreover if $\Theta'(z)$ is not extremely heavy, then unless $d_i >2-O(\epsilon_1)$ then $ \cC(\cT(y),z)$ is not highly concentrated.
\end{remark}

\par We further decompose $\bar \cR_i$ as follows:
\begin{defn}
\label{def:structure-random-decompostion}
\par Given $z \in \cI$ such that $\Theta'(z)$ is heavy and normal, and $\Sigma^r$ such that $z \in \Sigma^r$, let us define:
\begin{equation}
   \label{eq:structure-part-theta-0}
   \HS(z,r) := \left\{ y \in \fJ\cap \Sigma^r: \cC(\cT(y), z) \subset \fC \text{ is highly concentrated} \right\},
   \end{equation}
   and 
   \begin{equation}
   \label{eq:structure-set-theta-0}
   \SS(z,r) := \bigcup_{y \in \HS(z, r)} \Omega'(y).
   \end{equation}
   If there exists $\Sigma^r$ containing $z$ such that 
   \[ \tilde \mu (\SS(z,r)) \ge e^{-2s' - 10 C_1 \epsilon_1 s'}, \]
we will call $\Theta'(z)$ \emph{highly structured}. Then we have the following decomposition for $\bar \cR_i$:
\begin{equation} 
\label{eq:normal-random-structure-decomposition}
\bar \cR_i = \bar \cR^{\fr}_{i} \cup \bar \cR^{\fs}_{i},\end{equation}
where $\bar \cR^{\fs}_i$ denotes the union of heavy, normal, and highly structured $\Theta'(z)$, and $\bar \cR^{\fr}_i$ denotes the union of heavy, normal $\Theta'(z)$ which are not highly structured.
\end{defn}
\par We first prove that $\tilde \mu (\bar \cR^{\fr}_{i})$ is small. The proof that  $\tilde \mu (\bar \cR^{\fs}_{i})$ is small is deferred to the next section.
\begin{prop}
    \label{prop:dimension-improvement-random}
    \[ \tilde \mu (\bar \cR^{\fr}_{i}) \le \beta^{22}. \]
\end{prop}
\begin{proof}

\par For a contradiction, let us assume $\tilde \mu(\bar \cR^{\fr}_i) \ge \beta^{22}$. 

\par For every $y \in \bar \cR^{\fr}_i$, if
\[ |u(\fl(y))y \cap \bar \cR^{\fr}_i| \ge \beta^{31} e^{2s'}, \]
we will call it a \emph{good point}. Otherwise, we call it a \emph{bad point}. Then it is easy to see that 
the collection of bad points has $\tilde \mu$-measure $\le \beta^{30}$. 
Therefore, the collection of good points in $\bar \cR^{\fr}_i$, denoted by $\cR'_{i}$, has $\tilde \mu$-measure $\ge \beta^{22.1}$.

\par For $r \in [\beta e^{2s'}]$, we say that the cross-section $\Sigma^r \subset \Pi(x)$ is a \emph{good cross-section} if 
\[ \tilde \mu_{\fr}(\cR'_i \cap \Sigma^r) \ge \beta^{30} \tilde \mu_{\fr}(\Sigma^r).\]
Otherwise, we call it a \emph{bad cross-section}. Note that the $\tilde \mu_{\fr}$-measure of $\cR'_i$ contained in bad cross-sections is $\le \beta^{30}$. 
Thus, under our assumption that ${\tilde\mu}(\mathcal R'_i) \geq \beta^{22.1}$, there must be at least one good cross-section.
\par Now let us fix a good cross-section $\Sigma^r$. For such a cross-section we define $\cS(\Sigma^r)$ as the number of pairs of tubes from it that intersect somewhere in the playground with appropriate weights as follows: 
\[
\cS(\Sigma^r):=\sum_{ (y_1,y_2) \in \diamond_r} \cW(y_1) \cW(y_2)
\]
where $(y_1, y_2) \in \diamond_r$ if
\begin{itemize}
\item $y_1, y_2 \in \fJ\cap \Sigma^r$, and 
\item $\exists r' \in I(y_1, y_2)$ with $|r'| \ge \beta^{100} e^{2s'}$, and
\item $(\Omega'(y_1) \cap \bar \cR^{\fr}_i) \cup (\Omega'(y_2) \cap \bar \cR^{\fr}_i)  \neq \emptyset$.
\end{itemize}
The summands $\cW(y_1) \cW(y_2)$ can be interpreted as the weight given to the intersection $\cT(y_1) \cap \cT(y_2)$.

\par  For every $y \in \fJ \cap \Sigma^r$ such that $\Omega'(y) \cap \cR'_i \neq \emptyset$, we can choose 
$\hat{y} \in \Omega'(y)\cap \cR'_i$. By definition of $\cR'_i$, we have that
\[ |u(\fl(\hat y))\hat y \cap \bar \cR^{\fr}_i | \ge \beta^{31} e^{2s'}. \]
For every $ y' = u(r_{y'}) \hat y \in u(\fl(\hat y))\hat y \cap  \bar \cR^{\fr}_i$ such that $|r_{y'}|\ge \beta^{100} e^{2s'}$, there exists $z \in \cI$ such that $y' \in \Theta'(z)$ 
and 
\[\tilde \mu(\Theta'(z)) \ge e^{-2s'} e^{-(d_i +\epsilon_1)s'}.\]
For every $\tilde{z} \in \cI_z$, let $r_{\tilde{z}}$ be such that $u(r_{\tilde{z}}) \tilde z \in \Sigma^r$. Let us define
\[ \fJ(\tilde z, r) := \left\{ \hat{z} \in \fJ\cap \Sigma^r: \Omega'(\hat z) \cap \Omega(\tilde z, r_{\tilde{z}}) \neq \emptyset \right\}, \]
where $\Omega(\tilde z, r_{\tilde{z}}) $ is as in \eqref{eq:omega-y-r}.
Then for every $\hat z \in \fJ(\tilde z, r)$, we have that $y, \hat z$ satisfies $\diamond_r$. Since 
\[ \Omega(\tilde z, r_{\tilde{z}}) \subset \bigcup_{\hat z \in \fJ(\tilde z, r)} \Omega'(\hat z), \]
and 
\[ \tilde \mu_{\fr}(\Omega'(\tilde z)) \asymp \tilde \mu_{\fr}(\Omega(\tilde z, r_{\tilde{z}})), \]
we have that
\[ \sum_{\hat z \in \fJ(\tilde z, r)} \cW(y) \cW(\hat z) \gg \cW(y) \cW(\tilde z).  \]
\par Let us only consider $\tilde z \in \cI_{z;y'}:=\cI_z\cap( \Theta'(z) \setminus \Xi_{C_1 \epsilon_1 s'}( y'))$.
Then $\tilde z \in \cI_{z;y'}$ implies that for any  $\hat z \in \fJ(\tilde z, r)$,
\begin{equation}
\label{eq:assumption-1}
 \| \Delta(y, \hat z) \| \gtrsim \beta e^{-(1 + C_1 \epsilon_1 ) s'}. 
\end{equation}
\par Noting that $\tilde \mu_{\fr}(\Theta'(z) \setminus \Xi_{C_1\epsilon_1 s'}( y')) \ge \beta \tilde \mu_{\fr}(\Theta'(z))$ by the definition of $\bar \cR^{\fr}_i$, 
by running over all $\tilde{z} \in \cI_{z;y'}$, we have that 
\begin{align*} 
\sum_{\tilde z \in \cI_{z;y'}}\sum_{\hat z \in \fJ(\tilde z, r)} \cW(y) \cW(\hat z) &\gg \sum_{\tilde z\in \cI_{z;y'}} \cW(y)\cW(\tilde z) \\ 
 &\ge \beta \cW(y) \tilde \mu(\Theta'(z)) \ge \beta \cW(y) e^{-2s'} e^{-(d_i +\epsilon_1)s'}.  
\end{align*}
Since 
\[ |u(\fl(\hat y))\hat y \cap \bar \cR^{\fr}_i | \ge \beta^{31} e^{2s'}, \]
we can find a set $\mathcal Y$ of at least $\beta^{30} e^{2s'}$ different $ y' \in u(\fl(\hat y))\hat y \cap  \cR_i$ which are $\beta$-separated. The corresponding summation is:
\begin{align*}
    \sum_{y' \in \mathcal Y} \sum_{\tilde z \in \cI_{z;y'}} \sum_{\hat z \in \fJ(\tilde z, r)} \cW(y) \cW(\hat z) &\gg \sum_{y' \in \mathcal Y} \beta \cW(y) e^{-2s'} e^{-(d_i +\epsilon_1)s'} \\
     & \ge \beta^{31} e^{2s'}  \cW(y) e^{-2s'} e^{-(d_i +\epsilon_1)s'} \\
     &= \beta^{31} \cW(y) e^{-(d_i +\epsilon_1)s'}.
\end{align*}
By \eqref{eq:assumption-1} and Lemma \ref{lm:time-leave}, 
 For every $ \hat z \in \fJ\cap\Sigma^r$ such that $(y, \hat z) \in \diamond_r$ and \eqref{eq:assumption-1}, we have that $I(\hat y, \hat z) \subset I$ where $I$ is an interval of length $\le e^{C_1 \epsilon_1 s'}$. Note that the summand $\cW(y) \cW(\hat z)$ appears in the summation for $y' \in \cY$ only if $y' = u(r_{y'}) \hat y$ where $r_{y'} \in I(\hat y, \hat z)$. Therefore, every summand $\cW(y) \cW(\hat z)$ from above is counted in summations for at most $\beta^{-1}e^{C_1\epsilon_1 s'}$ different $y' \in \cY$. Therefore, we have that the contribution to $\cS(\Sigma^r)$
from the above summation is at least $\beta^{32} \cW(y) e^{-(d_i +\epsilon_1)s' - C_1 \epsilon_1 s'}$.
By running over all $y \in \fJ \cap \Sigma^r$ such that $\Omega'(y) \cap \cR'_i \neq \emptyset$, we get 
\begin{equation} 
\label{eq:cS-lower-bound}
\cS(\Sigma^r) \ge \beta^{62} \tilde{\fm} e^{- (d_i + \epsilon_1) s' - C_1 \epsilon_1 s'} \ge \tilde{\fm} e^{-d_i s' -2 C_1 \epsilon_1 s'}, \end{equation}
where $\tilde{\fm} = \tilde \mu_{\fr}(\Sigma^r) \asymp e^{-2s'}$.
\par On the other hand, let us get an upper bound on $\cS(\Sigma^r)$.
\par For any fixed $y \in \fJ \cap \Sigma^r$ and $z\in \cI \cap \Sigma^r \cap \bar \cR^{\fr}_i$, let us consider all $z'\in \cI_z$ such that there exists $r' \in I(y, z')$ with $|r'| \ge \beta^{100} e^{2s'}$. By Lemma \ref{lm:concentrated-surface}, $\fD_{z}(z') \in \cC(\cT(y), z)$. 
\par Let us first consider the case $y \in \HS(z,r)$.  By \eqref{eq:fScT}, we have that 
\begin{align*}
    \sum_{z' \in \cI_z, (y, z') \in \diamond_r} \cW(y) \cW(z') &\le  \beta^{-101} \cW(y) e^{2s'} e^{-2d_i s'} e^{-2s'} \\  &= \cW(y) e^{-2d_i s' +\epsilon_1 s'}.
\end{align*}
From the definition of $\bar \cR^{\fr}_{i}$, we have that 
\begin{align*} 
\sum_{y \in \HS(z,r)} \cW(y) &\ll \tilde \mu (\SS(z,r)) \le e^{-2s' - 10C_1 \epsilon_1 s'}. 
\end{align*}
Thus, by summing over $y \in \HS(z,r)$, we have 
\begin{align*}
   \sum_{y \in \HS(z,r)} \sum_{z' \in \cI_z, (y, z') \in \diamond_r} \cW(y) \cW(z') &\le \sum_{y \in \HS(z,r)}  \cW(y) e^{-2d_i s' +\epsilon_1 s'} \\
                                                                                    &\le e^{-2s' - 10C_1 \epsilon_1 s'} e^{-2d_i s' +\epsilon_1 s'} \le e^{-2s' -2d_i s' - 9 C_1 \epsilon_1 s'}.
\end{align*}
\par We then consider the case $y \not\in \HS(z,r)$. Since in this case $\cC(\cT(y), z)$ is not highly concentrated, we have that
\begin{align*}
    \sum_{z' \in \cI_z, (y, z') \in \diamond_r} \cW(y) \cW(z') &\le \cW(y) e^{2s' - 7C_1\epsilon_1 s'} e^{-2d_i s'} e^{-2s'} \\  &= \cW(y) e^{-2d_i s' - 7C_1\epsilon_1 s'}.
\end{align*}
Note that 
\[ \sum_{y \not\in \HS(z, r)} \cW(y) \ll \tilde \mu (\Sigma^r) \ll e^{-2s'}.\]
By summing over $y \not\in \HS(z,r)$, we have 
\begin{align*}
   \sum_{y \not\in \HS(z, r)} \sum_{z' \in \cI_z, (y, z') \in \diamond_r} \cW(y) \cW(z') &\le \sum_{y \not\in \HS(z, r)} \cW(y) e^{-2d_i s' - 7C_1\epsilon_1 s'} \\      
                       &\le e^{-2s'} e^{-2d_i s' - 7C_1\epsilon_1 s'} = e^{-2s'  -2d_i s' - 7C_1\epsilon_1 s'}.
\end{align*}
Therefore, by summing over $y \in \fJ\cap \Sigma^r$, we have 
\begin{align*}
   \sum_{y \in \fJ\cap \Sigma^r} \sum_{z' \in \cI_z, (y, z') \in \diamond_r} \cW(y) \cW(z') &\le e^{-2s'  -2d_i s' - 9C_1\epsilon_1 s'}+ e^{-2s'  -2d_i s' - 7C_1\epsilon_1 s'} \\ &\le e^{-2s'  -2d_i s' - 6.9C_1\epsilon_1 s'}.
\end{align*}
Since $z \in \cR_i$, we have  
$$\tilde \mu_{\fr} (\Theta'(z)) \ge e^{-(d_i + \epsilon_1)s'} e^{-2s'}.$$
Thus, we have that there are at most $\tilde \fm e^{2s'} e^{d_i s' + \epsilon_1 s'}$ 
different $z$ from $ \cI \cap \Sigma^r \cap \cR_i$. By running over all $z \in \cI\cap \cR_i \cap \Sigma^r$, we have 

\begin{align*}
    \cS(\Sigma^r) &= \sum_{z \in \cI\cap \cR_i\cap \Sigma^r} \sum_{y \in \fJ\cap \Sigma^r} \sum_{z' \in \cI_z, (y, z') \in \diamond_r} \cW(y) \cW(z') \\ 
    &\le \sum_{z \in \cI\cap \cR_i\cap \Sigma^r}  e^{-2s'  -2d_i s' - 6.9C_1\epsilon_1 s'} \\
    &\le  \tilde \fm e^{2s'} e^{d_i s' + \epsilon_1 s'} e^{-2s'  -2d_i s' - 6.9C_1\epsilon_1 s'} = \tilde \fm e^{-d_i s' -  6C_1\epsilon_1 s'}.
\end{align*} 
This upper bound is smaller than the lower bound in \eqref{eq:cS-lower-bound}
which leads to a contradiction. This proves that $\tilde \mu (\bar \cR^{\fr}_i) < \beta^{22}$.
\end{proof}
\par Using arguments similar to the proof of Proposition \ref{prop:dimension-improvement-random}, we will prove the measure of $\cR_{i,j}$ is small:
\begin{prop}
    \label{prop:dimension-improvement-strange-part}
    For any $ C_1 \epsilon_1 s' \le j \le 2s' $, we have 
    \[ \tilde{\mu}(\cR_{i,j}) \le \beta^{22}. \]
\end{prop}
\begin{proof}
For each $\Theta'(\bar z) \subset \cR_{i,j}$, fix a subset $\Xi_j(z) \subset \Theta'(\bar z)$ as in \eqref{eq:strange-nbhd-condition} and let $\cR'_{i,j}$ denote the union of all of those $\Xi_j(z)$ for $\Theta'(\bar z) \subset \cR_{i,j}$. To prove $\tilde \mu_{\fr}(\cR_{i,j}) < \beta^{22} $, 
it suffices to show that $\tilde \mu_{\fr}(\cR'_{i,j}) < \beta^{22.1}$.
\par Assume in contradiction that $\tilde \mu_{\fr}(\cR'_{i,j}) \ge \beta^{22.1}$. 
Using similar arguments as in the proof of Proposition \ref{prop:dimension-improvement-random}, we can get 
a subset $\cR''_{i,j} \subset \cR'_{i,j} $ such that $\tilde \mu_{\fr}(\cR''_{i,j}) \ge \beta^{22.2}$ and for any $y \in \cR''_{i,j}$,
\[|u(\fl(y))y \cap \cR'_{i,j}| \ge \beta^{30} e^{2s'}, \]
and get a cross-section $\Sigma^r$ such that 
\[ \tilde \mu_{\fr}(\Sigma^r\cap \cR''_{i,j}) \ge \beta^{30} \tilde \mu_{\fr}(\Sigma^r). \]
\par Let us define $\cS_j(\Sigma^r)$ as follows:
\[  \cS_j(\Sigma^r) := \sum_{ (y_1, y_2) \in \diamond_{r,j}}\cW(y_1) \cW(y_2), \]
where $(y_1, y_2) \in \diamond_{r,j}$ if 
\begin{itemize}
\item $y_1, y_2 \in  \fJ\cap\Sigma^r$, and 
\item  $\exists r' \in I(y_1, y_2)$ with $|r'|\ge \beta^{100} e^{2s'}$, and 
\item $(\Omega'(y_1)\cap \cR'_{i,j})\cup (\Omega'(y_2)\cap \cR'_{i,j}) \neq \emptyset$, and 
\item $\|\Delta(y_1, y_2)\| \in [\beta e^{-s' - j-1}, \beta e^{-s' - j }]$.
\end{itemize}

\par Let us first get a lower bound on $\cS_j(\Sigma^r)$. For any fixed $y \in \fJ \cap \Sigma^r$ with $\Omega'(y)\cap \cR''_{i,j} \neq \emptyset$, we can find $\hat y \in \Omega'(y)\cap \cR''_{i,j}$. By the properties of $\cR''_{i,j}$, we have
\[ |u(\fl(\hat y))\hat y \cap \cR'_{i,j}| \ge \beta^{22} e^{2s'}.\]
For any $y' \in u(\fl(\hat y))\hat y \cap \cR'_{i,j}$, there exists $z\in \cI$ and $ \Xi_j(z') \subset \Theta'(z)$ with $y' \in \Xi_j(z')$ such that 
$$\tilde \mu_{\fr} (\Xi_j(z')) \ge (1-\beta)^j e^{-2s'} e^{- (d_i+\epsilon_1)s' }$$
and 
$$ \tilde \mu_{\fr} (\Xi_{j+1}(y')) < (1-\beta)\tilde \mu_{\fr} (\Xi_j(z')).$$

Let us denote $\cI_{y'\!, j} := \cI_z\cap(\Xi_j(z') \setminus \Xi_{j+1}(y'))$. Following the same argument as in the proof of Proposition \ref{prop:dimension-improvement-random}, for every $y'_2 \in  \cI_{y'\!, j}$ we can get a finite subset 
$\fJ_j(y'_2, r) \subset \fJ\cap\Sigma^r$ such that for any $y_2 \in \fJ_j(y'_2, r)$, $(y_2, y)$ satisfies $\diamond_{r,j}$, and 
\begin{align*}
    \sum_{y_2 \in \fJ_j(y'_2, r)} \cW(y) \cW(y_2) \gg \cW(y) \cW(y'_2).
\end{align*}
By running over all $y'_2 \in \cI_{y'\!,j}$, we get 
\begin{align*}
   & \sum_{y'_2 \in \cI_{y'\!, j} } \,\sum_{y_2 \in \fJ_j(y'_2,r)} \cW(y) \cW(y_2) \\
   &\gg \sum_{y'_2 \in \cI_{y'\!, j}} \cW(y) \cW(y'_2) \gg \cW(y)\,\tilde \mu_{\fr}(\Xi_j(z') \setminus \Xi_{j+1}(y')) \\ 
   &> \cW(y) \beta(1-\beta)^j e^{-2s'} e^{- (d_i+\epsilon_1)s' } >  \cW(y)e^{-2s'} e^{- (d_i+2\epsilon_1)s' }.
\end{align*}
Similarly to the proof of the lower bound for $\cS(\Sigma^r)$ in the proof of Proposition \ref{prop:dimension-improvement-random} (specifically, the discussion in p.~\pageref{eq:assumption-1}), since 
\[ |u(\fl(\hat y))\hat y \cap \cR'_{i,j}| \ge \beta^{30} e^{2s'}, \]
we can find a set $\cY_j$ of at least $\beta^{29} e^{2s'}$ different $y' \in u(\fl(\hat y))\hat y \cap \cR'_{i,j}$ which are $\beta$-separated. 
The corresponding summation is:
\begin{align}
   & \sum_{y' \in \cY_j} \, \sum_{y'_2 \in \cI_{y'\!, j} } \,\sum_{y_2 \in \fJ_j(y'_2, r)} \cW(y) \cW(y_2) \label{eq: triple sum} \\
   &\qquad \gg \sum_{y' \in \cY_j} \cW(y)e^{-2s'} e^{- (d_i+2\epsilon_1)s' } \nonumber\\
   &\qquad \gg \beta^{29} e^{2s'} \cW(y)e^{-2s'} e^{- (d_i+2\epsilon_1)s' } = \beta^{29} \cW(y) e^{- (d_i+2\epsilon_1)s' }.\nonumber
\end{align}
Since $\|\Delta(y, y_2)\| \ge \beta e^{-s' - j-1}$, by Lemma \ref{lm:time-leave}, for each $y_2 \in \fJ\cap \Sigma^r$ such that $(y, y_2) \in \diamond_{r,j}$, we have that  $I(y, y_2) \subset I$ where $I$ is an interval of length $\le e^{j+1}$. Note that the summand $\cW(y) \cW(y_2)$ appears in the summation \eqref{eq: triple sum} for $y' \in \cY_j$ only if $y' = u(r_{y'}) \hat y$ where $r_{y'} \in I(y, y_2)$. Therefore, every summand $\cW(y) \cW(y_2)$ appears in \eqref{eq: triple sum} for at most $\beta^{-1} e^{j+1}$ different $y' \in \cY_j$. 
This implies that for a fixed $y \in \fJ \cap \Sigma^r\cap \cR''_{i,j}$,
\begin{align*}
    \sum_{y_2:\ (y,y_2) \in \diamond_{r,j}}\cW(y) \cW(y_2) &\geq \beta^{30} \cW(y) e^{- (d_i+2\epsilon_1)s'-j-1 } \\
    &> \cW(y) e^{- (d_i + 3 \epsilon_1)s' - j }.
\end{align*} 
By summing over all $y \in \fJ \cap \Sigma^r$ with $\Omega'(y) \cap \cR''_{i,j} \neq \emptyset$, we have
\begin{equation}
\label{eq:j lower bound}
\cS_j(\Sigma^r) \ge \tilde \mu(\cR''_{i,j}\cap \Sigma^r) e^{- (d_i + 3 \epsilon_1)s' - j} \ge \beta^{30} \tilde \fm e^{- (d_i + 3 \epsilon_1)s' - j},
\end{equation}
where $\tilde \fm = \tilde \mu(\Sigma^r)$.

\medskip 

\par Now let us get an upper bound on $\cS_j(\Sigma^r)$.
To do this, we need the following lemma, for a fixed $y_2 \in \fJ\cap \Sigma^r$, and $ z \in \cI$ such that $\Theta'( z)$ contains a $ \Xi_j (z') \subset \Sigma^r \cap \cR'_{i,j} $:
\begin{lemma}\label{lem:sigma j lemma}
    If there is at least one $y_1\in \cI_{ z} \cap \Xi_j (z')$ so that $(y_2, y_1) \in \diamond_{r,j}$ then
\[ z' \in \Sigma_{s', j-1} (y_2). \]
\end{lemma}
\begin{proof}
Since $z', y_2 \in \Sigma^r$, it suffices to show that 
\[ z' \in \Pi_{s', j-1}(y_2). \]
Write
\[ y_1 = a_0 a u^\ast (r^\ast_{y_1})\exp(\vv w^-) \exp(\vv w^+) u(r_{y_1}) y_2, \]
where $\vv w^- \in \vv r^-$ satisfies that $\|\vv w^-\| \in [\beta e^{-s' - j-1}, \beta e^{-s' - j }]$. Then by Lemma~\ref{lm:concentrated-surface}, we have $\|\vv w^+\| \le \beta e^{s' - j}$. Let $\hat y_1 = a(-2t) y_1, \hat y_2 = a(-2t) y_2$. Then we have
\[ \hat y_1 = a_0 a u^\ast (\hat r^\ast) \exp (\hat{ \vv w}^-)\exp (\hat{ \vv w}^+) u(\hat r) \hat y_2,\]
where $\hat r^\ast, \hat r \in [\beta]$, $\hat{\vv w}^- \in B_{\vv r^-}(\beta e^{-j})$, and  $\hat{\vv w}^+ \in B_{\vv r^+}(\beta e^{-j})$. By Lemma \ref{lm:product-w}, 
\[ \exp (\hat{ \vv w}^-)\exp (\hat{ \vv w}^+) = \exp(\hat{\vv w})\exp(\hat{\vv h}), \]
where $\hat{\vv w} \in B_{\fr_1 + \fr_2} (1.1 \beta e^{-j})$, and $\hat{\vv h} \in B_{\R \fa_0 + \fh} (0.1 \beta e^{-2j})$. 
Therefore, 
\begin{equation}
\label{eq:hat-y1-y2}
\hat y_1 = \exp(\tilde{\vv w}) \exp(\tilde{\vv h}) \hat y_2, \end{equation}
where $\tilde{\vv w} \in  B_{\fr_1 + \fr_2} (1.3 \beta e^{-j})$
and $\tilde{\vv h} \in B_{\R \fa_0 + \fh}(1.1\beta)$. Note that $y_1 \in \Xi_j (z')$, we have 
\[ y_1 = \exp(\vv w^+_1 + \vv w^-_1) \exp(\vv h_1) z',  \]
where $\vv w^+_1 \in B_{\vv r^+}(\beta e^{-s'})$, $\vv w^-_1 \in B_{\vv r^-}(\beta e^{-s' - j})$, and $\vv h_1 \in \fq^{A_0 H}(2s', \beta)$. Let us denote $\hat z' = a(-2s') z'$, then we have 
\begin{equation} 
\label{eq:hat-y1-z}
\hat y_1 = \exp(\tilde{\vv w}_1) \exp(\tilde{\vv h}_1) \hat z', \end{equation}
where $\tilde{\vv w}_1 \in B_{\fr_1 + \fr_2}(\beta e^{-j})$, and $\tilde{\vv h}_1 \in B_{\R \fa_0 + \fh}(\beta)$. \eqref{eq:hat-y1-y2} and \eqref{eq:hat-y1-z} imply that 
\[ \hat y_2 = \exp(-\tilde{\vv h}) \exp(-\tilde{\vv w})\exp(\tilde{\vv w}_1) \exp(\tilde{\vv h}_1) \hat z'.  \]
By Lemma \ref{lm:product-w}, we have 
\[ \hat y_2 = \exp(\tilde{\vv w}_2) \exp(\tilde{\vv h}_2) \hat z', \]
where $\tilde{\vv w}_2 \in B_{\fr_1 + \fr_2}(2.1 \beta e^{-j})$ and 
$\tilde{\vv h}_2 \in B_{\R \fa_0 + \fh}(2.1 \beta)$. Noting that $\hat y_2 = a(-2s') y_2$, and $\hat z' = a(-2s') z'$, we have 
\[ z' \in \Pi_{s', j-1} (y_2). \]
This completes the proof of the lemma.
\end{proof}

We now continue with the proof of Proposition~\ref{prop:dimension-improvement-strange-part}. Recall that $y_2 \in \fJ\cap \Sigma^r$, and $ z \in \cI$ such that $\Theta'( z)$ contains a $ \Xi_j (z') \subset \Sigma^r \cap \cR'_{i,j} $. 
Using similar arguments to those used in the proof of Lemma~\ref{lem:sigma j lemma}, we can prove that 
\[ \Xi_j(z') \subset \Sigma_{s', j-2}(y_2). \]
Define 
\begin{equation}\label{eq:cIy2j}
\begin{aligned}
\cI(y_2, j) := \Bigl\{ z \in \cI \cap \Sigma^r \cap \cR_{i,j}\!: &\,\text{the heavy slice $\Xi_j(z') \subset \Theta'(z)$} \\
&\text{ satisfies } \Xi_j(z') \subset \Sigma_{s', j-2}(y_2) \cap \cR'_{i,j} \Bigr\}.
\end{aligned}
\end{equation}
Fix $z \in \cI(y_2, j)$. By Lemma \ref{lm:concentrated-surface}, the coordinates $\fD_{ z}(y_1)$ of all $y_1\in \cI_z$ such that $(y_2,y_1) \in \diamond_{r,j}$
are 
contained in 
$$\cC(\cT(y_2),  z) \cap \fB(\cT(y_2),  z).$$ 
By $\diamond_{r,j}$ and Lemma \ref{lm:concentrated-surface}, we have that if there is at least one such $y_1$ the size of $\fB(\cT(y_2),  z)$ is $\le \beta^{-98} e^{-s'-j}$.


Using \eqref{eq:fScT} we may conclude that 
\[ \sum_{\fD_{ z}(y_1) \in \cC(\cT(y_2),  z)\cap \fB(\cT(y_2),  z)} \cW(y_1)\cW(y_2) \ll  \cW(y_2) \beta^{-198} e^{-j} e^{-2d_i s'}. \]
By Lemma \ref{lm:initial-dimension-under-a-t}, we have 
\[ \mu(\Sigma_{s', j-2}(y_2)) \le e^{-2s'} e^{- d_0 (j-2)}.\]
Since $\Pi(x)$ is a full playground, we have 
\[ \tilde \mu(\Sigma_{s', j-2}(y_2)) \le \beta^{-15}e^{-2s'} e^{- d_0 (j-2)}.\]
Noting that for $z'$ as in \eqref{eq:cIy2j} 
\[
\tilde \mu_{\fr}(\Xi_j(z')) \ge (1-\beta)^j e^{-2s'} e^{-(d_i + \epsilon_1)s'},
\]
we have that 
\begin{align*}
|\cI(y_2, j)| & \le \frac{\beta^{-15}e^{-2s'} e^{-  d_0 (j-2)}}{ (1-\beta)^j e^{-2s'} e^{-(d_i + \epsilon_1)s'}}\\
&= \beta^{-15}(1-\beta)^{-j} e^{- d_0 (j-2)} e^{ (d_i+\epsilon_1) s' } .
\end{align*}
 Thus, for a fixed $y_2 \in \fJ\cap \Sigma^r$, the corresponding 
sum
\begin{align*}
 \sum_{ z \in \cI(y_2, j)} \,&\sum_{\substack{\fD_{ z}(y_1) \in \,\cC(\cT(y_2), z) \\ \quad\cap \fB(\cT(y_2),  z)}} \cW(y_1)\cW(y_2) \\ \ll& \cW(y_2) \beta^{-198} e^{-j} e^{-2d_i s'}  \beta^{-15}(1-\beta)^{-j} e^{- d_0 (j-2)} e^{ (d_i+\epsilon_1) s' } \\
\le& 2 \cW(y_2) \beta^{-213} e^{- d_0 (j-2)} e^{ (d_i+\epsilon_1) s' }e^{-j} e^{-2d_i s'}.
\end{align*}
By summing over all $y_2 \in \fJ\cap \Sigma^r$, we get
\[ \cS_j(\Sigma^r) \le 2\tilde \fm \beta^{-213} e^{- d_0 (j-2)} e^{ (d_i+\epsilon_1) s' }e^{-j} e^{-2d_i s'} \le  \tilde \fm e^{2 \epsilon_1 s'} e^{- d_0 j} e^{-d_i s' - j}. \]
This upper bound is smaller than the lower bound in \eqref{eq:j lower bound} because $ d_0 j \ge d_0 C_1 \epsilon_1 s' \ge 10 \epsilon_1 s'$. This leads to a contradiction, completing the proof of Proposition~\ref{prop:dimension-improvement-strange-part}.
\end{proof}
\section{No loss of dimension in a coarser scale}
\label{sec-no-loss-of-dimension}
\par Using similar arguments to those in the proofs of Proposition \ref{prop:dimension-improvement-random} and Proposition \ref{prop:dimension-improvement-strange-part}, we can also prove that compared to $\mu_i$, the dimension of $\tilde \mu$ does not drop significantly:
\begin{prop}
  \label{prop:dimension-does-not-drop}
  Let us keep the notation as in Proposition \ref{prop:improve-dimension}. Then there exists $\mathfrak{Z} \subset \Pi(x)$ with $\tilde \mu(\mathfrak Z) \le \beta^{50}$ such that 
for $x \in \cI\setminus\mathfrak{Z}$,
\begin{equation} 
  \label{eq:upper-theta}
  \tilde \mu(\Theta'(x)) \le e^{-2s'} e^{-(d_i - C_1 \epsilon_1)s'}. \end{equation}
\end{prop}
\begin{proof}[Sketch of the proof]
    \par For $z \in \cI$, let us call $\Theta'(z)$ \emph{extremely heavy} if 
    \[ \tilde \mu(\Theta'(z)) > \tilde M_{s'} \qquad\text{for $ \tilde M_{s'}= e^{-2s'} e^{-(d_i - C_1 \epsilon_1)s'}$}. \]
Let $\mathfrak Z$ denote the union of extremely heavy $\Theta'(z)$'s. Then it suffices to show that 
\[ \tilde \mu (\mathfrak Z ) \le \beta^{50}. \]
Similarly to \eqref{eq:strange-normal-decomposition}, let us decompose
\[ \fZ = \fZ' \cup \bigcup_{ C_1 \epsilon_1 s' \le j \le 2s'} \fZ_j. \]
Here $\fZ'$ denotes the union of extremely heavy $\Theta'(z)$ such that for any 
$$\Xi_{C_1 \epsilon_1 s'} (y) \subset \Theta'(z),$$ 
we have 
\[ \tilde \mu(\Xi_{C_1 \epsilon_1 s'} (y)) \le (1- \beta) \tilde \mu (\Theta'(z)); \]
and $\fZ_j$ denotes the union of extremely heavy $\Theta'(z)$ such that there exists 
$\Xi_j(y) \subset \Theta'(z)$ such that 
\[ \tilde \mu (\Xi_j(y)) > (1-\beta)^j \tilde \mu(\Theta'(z)), \]
and for any $\Xi_{j+1} (y') \subset \Xi_j(y)$, 
\[ \tilde \mu (\Xi_{j+1}(y')) \le  (1-\beta) \tilde \mu(\Xi_j(y)). \]
Then it suffices to show that $\tilde \mu(\fZ') \le \beta^{50.1}$, and 

$$\tilde \mu(\fZ_j) \le \beta^{50.1},  \text{ for any } j \in [C_1 \epsilon_1 s', 2s'].$$
\par We first sketch the proof of 
$$\tilde \mu(\fZ') \le \beta^{50.1},$$
which follows the proof of Proposition \ref{prop:dimension-improvement-random}.
\par Assume in contradiction that 
\[ \tilde \mu(\fZ') > \beta^{50.1}. \]
By repeating the argument in the proof of Proposition \ref{prop:dimension-improvement-random}, 
we get $\fZ'' \subset \fZ'$ and a cross-section $\Sigma^r$ such that 
\begin{itemize}
    \item $\tilde \mu(\fZ'') \ge \beta^{51}$;
    \item $\tilde \mu (\fZ'' \cap \Sigma^r) \ge \beta^{52} \tilde \mu(\Sigma^r)$;
    \item For any $y \in \fZ''$, $\left|u(\fl(y)) y \cap \fZ' \right| \ge \beta^{52} e^{2s'}$.
\end{itemize}
Now let us consider the intersection number $\cZ(\Sigma^r)$ as follows:
\[ \cZ(\Sigma^r) := \sum_{(y_1, y_2) \in \diamond'_r} \cW(y_1)\cW(y_2), \]
where $(y_1, y_2) \in \diamond'_r$ if 
\begin{itemize}
\item $y_1, y_2 \in \fJ\cap \Sigma^r$, and 
\item $\exists r' \in I(y_1, y_2)$ with $|r'| \ge \beta^{100} e^{2s'}$, and
\item $(\Omega'(y_1) \cap \fZ') \cup (\Omega'(y_2) \cap \fZ')  \neq \emptyset$.
\end{itemize}
We can repeat the argument in the proof of Proposition \ref{prop:dimension-improvement-random} to get lower and upper bounds on $\cZ(\Sigma^r)$ (similarly to $\cS(\Sigma^r)$): 
\par For lower bound, we get that for any $y_1 \in \fJ \cap \Sigma^r$ such that 
\[ \Omega'(y_1) \cap \fZ'' \neq \emptyset, \]
the summation 
\begin{align}\label{eq: y_2 sum for diamond'}
    \sum_{y_2: (y_1, y_2) \in \diamond'_r} \cW(y_1) \cW(y_2) &\gg \beta^{52} e^{2s' - C_1 \epsilon_1 s'} \beta \tilde{M}_{s'} \cW(y_1) ,
\end{align}
since the trajectory $u(\fl(y)) y \cap \fZ'$ encounters $\gtrapprox \beta ^{51} e^{2s'}$ many very heavy $\Theta'$ neighbourhoods sufficiently far from $\Sigma^r$. For each such $\Theta'(z)$, 
\begin{equation}\label{eq:theta sum'}
\tilde M_{s'} \leq \tilde \mu (\Theta'(z)) \approx \sum_{y_2}\!{\vphantom{\sum}}' \cW(y_2)
\end{equation}
where the sum is over all $y_2 \in \fJ \cap \Sigma^r$ so that $(y_1,y_2) \in \diamond_r'$ and there is a $|r'|\leq \beta e^{2s'}$ and $y_2' \in \Omega(y_2)$ so $u(r') y_2' \in \Theta'(z)$.
However each such $(y_1,y_2)$ may appear in the sum \eqref{eq:theta sum'} up to $|I(y_1,y_2)|\beta^{-1}$ many times. Throwing away all $y_2$ in for which the corresponding $u(r')$ trajectory meets $\Theta'(z)$ in one $\Xi_{C_1\varepsilon_1 s'}$-subneighbourhood we can ensure that $|I(y_1,y_2)|\lessapprox e^{C_1\epsilon_1s'}$.
Combining everything one gets \eqref{eq: y_2 sum for diamond'}.

By summing up all $y_1 \in \fJ \cap \Sigma^r$ with $\Omega'(y_1) \cap \fZ'' \neq \emptyset$, we have 
\begin{align*} \cZ(\Sigma^r) &\gg \beta^{52} \tilde \fm \beta^{52} e^{2s' - C_1 \epsilon_1 s'} \beta \tilde{M}_{s'} \\
                            &= \beta^{110} \tilde \fm e^{-d_i s'} \ge \tilde \fm e^{-d_i s' -\epsilon_1 s'},
\end{align*}
where $\tilde \fm = \tilde \mu (\Sigma^r)$.
\par For the upper bound, by Lemma \ref{lm:concentrated-surface}, for any $y_1 \in \fJ \cap \Sigma^r$ and $z \in \cI \cap \Sigma^r \cap \fZ'$, we have that for any $y_2 \in \cI_z$ such that $(y_1, y_2) \in \diamond'_r$, we have 
\[ \fD_z (y_2) \in \cC(\cT(y_1), z). \]
By \eqref{eq:fScT}, we get
\begin{align*}
    \sum_{y_2 \in \cI_z: (y_1, y_2) \in \diamond'_r} \cW(y_1) \cW(y_2) &\le  \cW(y_1) \beta^{-101} e^{-2d_i s'}.
\end{align*}

Note that for any $z \in \cI \cap \Sigma^r \cap \fZ'$, 
\[ \tilde \mu (\Theta'(z)) > e^{-2s'} e^{-(d_i - C_1 \epsilon_1 )s'}. \]
Thus, we have 
\[ |\cI \cap \Sigma^r \cap \fZ'| < \tilde \fm e^{2s'} e^{(d_i - C_1 \epsilon_1)s'} \le e^{(d_i - C_1 \epsilon_1)s'}.\]
Therefore, by summing over all $y_1 \in \fJ\cap \Sigma^r$ and $z \in \cI \cap \Sigma^r \cap \fZ'$, we have 
\begin{align*}
    \cZ(\Sigma^r) &\le \tilde \fm |\cI \cap \Sigma^r \cap \fZ'| \beta^{-101} e^{-2d_i s'} \\
                  &\le \tilde \fm e^{(d_i - C_1 \epsilon_1)s'} \beta^{-101} e^{-2d_i s'} \le \tilde \fm e^{(-d_i - C_1 \epsilon_1 + \epsilon_1)s'}.
 \end{align*} 
 The upper bound is smaller than the lower bound, leading to a contradiction. This proves that 
 \[\tilde \mu(\fZ') \le \beta^{50.1}. \]
 \par We then sketch the proof of 
 \[\tilde \mu(\fZ_j) \le \beta^{50.1},  \text{ for any } j \in [C_1 \epsilon_1 s', 2s'], \]
 which follows the proof of Proposition \ref{prop:dimension-improvement-strange-part}.
 \par For every $z \in \cI \cap \Sigma^r \cap \fZ_j$, let us fix a $\Xi_j(z') \subset \Theta'(z)$ such that 
 \[ \tilde \mu (\Xi_j(z')) > (1-\beta)^j e^{-2s'} e^{-(d_i - C_1 \epsilon_1)s'}, \]
 and for any $\Xi_{j+1} (z'') \subset \Xi_j(z')$, 
 \[ \tilde \mu (\Xi_{j+1} (z'')) \le (1-\beta) \tilde \mu (\Xi_j(z')).  \]
 Let $\fZ'_j$ denote the union of these $\Xi_j(z')$'s. Then it suffices to show that 
 \[ \tilde \mu(\fZ'_j) \le \beta^{50.1}. \]
 Let us assume in contradiction that 
 \[\tilde \mu(\fZ'_j) > \beta^{50.1}. \]
 Repeating the argument as in the proof of Proposition \ref{prop:dimension-improvement-random}, we will get $\fZ''_j \subset \fZ'_j$ and a cross-section $\Sigma^r$ satisfying
 \begin{itemize}
     \item $\tilde \mu (\fZ''_j) \ge \beta^{51}$;
     \item $ \tilde \mu (\fZ''_j \cap \Sigma^r) \ge \beta^{52} \tilde \mu(\Sigma^r) $;
     \item For any $y \in \fZ''_j$, we have $|u(\fl(y))y \cap \fZ'_j| \ge \beta^{52} e^{2s'}$.
 \end{itemize}
Now let us consider the intersection number $\cZ_j (\Sigma^r)$ as follows:
\[\cZ_j(\Sigma^r) := \sum_{(y_1, y_2) \in \diamond'_{r,j}} \cW(y_1)\cW(y_2), \]
where $(y_1, y_2) \in \diamond'_{r,j}$ if 
\begin{itemize}
\item $y_1, y_2 \in \fJ\cap \Sigma^r$, and 
\item $\exists r' \in I(y_1, y_2)$ with $|r'| \ge \beta^{100} e^{2s'}$, and
\item $\|\Delta(y_1, y_2)\| \in  [\beta e^{-s' - j-1}, \beta e^{-s' - j }]$, and
\item $(\Omega'(y_1) \cap \fZ') \cup (\Omega'(y_2) \cap \fZ')  \neq \emptyset$.
\end{itemize}
We can repeat the argument in the proof of Proposition \ref{prop:dimension-improvement-strange-part} to get lower and
upper bounds on $\cZ_j(\Sigma^r)$ (similarly to $\cS_j(\Sigma^r)$):
\par For lower bound, we will get that for any $y_1 \in \fJ\cap \Sigma^r$ with 
$$\Omega'(y_1) \cap \fZ''_j \neq \emptyset,$$
the summation 
\begin{align*}
    \sum_{y_2: (y_1, y_2) \in \diamond'_{r,j}} \cW(y_1) \cW(y_2) &\gg \cW(y_1) \beta^{51} e^{2s' - j} \beta (1-\beta)^j \tilde{M}_{s'} \\ 
     &\ge \cW(y_1) \beta^{53} e^{2s' - j} e^{-2s'} e^{-(d_i - C_1 \epsilon_1) s'}  \\ 
     &\ge \cW(y_1) \beta^{53} e^{(-d_i + C_1 \epsilon_1) s' - j}.
\end{align*}
By summing over all $y_1 \in \fJ\cap \Sigma^r$ with 
$$\Omega'(y_1) \cap \fZ''_j \neq \emptyset,$$
we get 
\begin{align*}
    \cZ_j(\Sigma^r) \ge \beta^{52} \tilde \fm \beta^{53} e^{(-d_i + C_1 \epsilon_1) s' - j} \ge \tilde \fm e^{(-d_i + C_1 \epsilon_1 -\epsilon_1) s' - j}.
\end{align*}
For upper bound, for $y_1 \in \fJ \cap \Sigma^r$ and $z \in \cI\cap \Sigma^r$ such that $\Theta'(z)$ contains a 
\[ \Xi_j(z') \subset \fZ'_j, \]
if there exists $y_2 \in \cI \cap \Xi_j(z')$ with $(y_1, y_2) \in \diamond'_{r,j}$, we will get 
\[ \Xi_j(z') \subset \Sigma_{s', j-2} (y_1). \]
Let us define
\begin{equation}
\begin{aligned}
\cI'(y_1, j) := \Bigl\{ z \in \cI \cap \Sigma^r \cap \fZ'_{j}\!: &\,\text{the heavy slice $\Xi_j(z') \subset \Theta'(z)$} \\
&\text{ satisfies } \Xi_j(z') \subset \Sigma_{s', j-2}(y_1) \cap \fZ'_{j} \Bigr\}.
\end{aligned}
\end{equation}
For $z \in \cI'(y_1, j)$, let us define 
\[ \fJ(y_1, z, j) := \left\{ y_2 \in \cI_z \cap \Sigma_{s', j-2}(y_1) \cap \fZ'_j: (y_1, y_2) \in \diamond'_{r,j} \right\}. \]
Then for every $z \in \cI'(y_1, j)$, by Lemma \ref{lm:concentrated-surface}, for any $y_2 \in \fJ(y_1, z, j)$, we have
\[ \fD_z(y_2) \in \cC(\cT(y_1), z) \cap \fB(\cT(y_1), z). \]
Therefore, 
\[ \sum_{y_2 \in \fJ(y_1, z, j)} \cW(y_1) \cW(y_2) \le \cW(y_1) \beta^{-198} e^{-j} e^{-2d_i s'}. \]
Moreover, since $\tilde \mu (\Sigma_{s', j-2} (y_1)) \le e^{-2s'} e^{- d_0 (j-2)}$, we have 
\begin{align*} 
|\cI'(y_1, j)| &\le e^{-2s'} e^{- d_0 (j-2)} (1-\beta)^{-j} e^{2s'} e^{(d_i - C_1 \epsilon_1)s'}  \\
                &\le e^{(d_i - C_1 \epsilon_1 + \epsilon_1)s' - d_0 j}.
\end{align*}
By summing up all $y_1 \in \fJ\cap \Sigma^r$ and $z \in \cI'(y_1, j)$, we get 
\begin{align*}
    \cZ_j(\Sigma^r) &\le \sum_{y_1 \in \fJ\cap \Sigma^r} \cW(y_1) |\cI'(y_1, j)|  \beta^{-198} e^{-j} e^{-2d_i s'} \\
                    &\le \sum_{y_1 \in \fJ\cap \Sigma^r} \cW(y_1) e^{(d_i - C_1 \epsilon_1 + \epsilon_1)s' - d_0 j} \beta^{-198} e^{-j} e^{-2d_i s'} \\
                     &\le \tilde \fm e^{(-d_i - C_1 \epsilon_1 + 2\epsilon_1)s' - j - d_0 j}.
\end{align*}
The upper bound is smaller than the lower bound, leading to a contradiction. This proves that 
 \[\tilde \mu(\fZ'_j) \le \beta^{50.1}. \]
\end{proof}
}


\section{Dimension Improvement --- structured part}
\label{sec-structured-component}
\par Let us keep notation in Definition \ref{def:structure-random-decompostion}. This section is devoted to the study of the structured part $\bar \cR^{\fs}_i$.

\par We will prove the following statement:
\begin{prop}
  \label{prop:dimension-improvement-structured-component}
  Suppose $ d_i \le 4 - 10^{11} C_1 \epsilon_1 $.   We have 
   \[ \tilde \mu (\bar \cR^{\fs}_i) \le \beta^{22}. \]
\end{prop}
\par Note that by \eqref{eq:strange-normal-decomposition} and \eqref{eq:normal-random-structure-decomposition}, Proposition \ref{prop:dimension-improvement-random}, \ref{prop:dimension-improvement-strange-part} and \ref{prop:dimension-improvement-structured-component} will imply \eqref{eq:heavy-part-is-small} and thus Proposition \ref{prop:improve-dimension}.

\par Note that up to now, from the results of \S\ref{subsec-initialize} we have only used Proposition~\ref{prop:initial-bound-dimension}. Proposition~\ref{prop:dimension-improvement-structured-component} uses in an essential way Proposition~\ref{prop:sl2xr2-closing-lemma}, which essentially says $\tilde \mu$ cannot concentrate along $\SL(2,\R)\ltimes\R^2$-orbits.

\par Let us first fix some notation.
\begin{defn}
\label{def:full-omega}
For $z \in \fJ$, if
\begin{equation} 
  \label{eq:full-omega}
  \tilde \mu(\Omega'(z)) \ge e^{-2s'} e^{-2d_i s' -  8 C_1 \epsilon_1 s'},\end{equation}
we will call $\Omega'(z)$ \emph{full}. Otherwise, we call it \emph{almost empty}.
\end{defn}
\begin{defn}
\label{def:comparable}
    For $z \in \cI$, we say that two curve neighborhoods $\cC = \cC(\cT(y), z)$ and $\cC' = \cC(\cT(y'), z)$ are \emph{comparable} if $\cC'$ is contained in a $e^{-3s' + 11 C_1 \epsilon_1 s'}$-neighborhood of $\cC$. Let us denote by
   \[ \cC' \approx \cC, \]
   if $\cC$ and $\cC'$ are comparable.
\end{defn}
\par We need the following lemma:
\begin{lemma}
    \label{lm:hyperbola-plane}
    Let $C \subset \fC =  \left[1.2 \beta e^{-s'}\right]^2$ 
    be a hyperbola with the following equation:
    \[ X = \frac{-Y}{w_1 Y + k}, \]
    where $|w_1| \in [e^{s' - 10 C_1 \epsilon_1 s'}, \beta e^{s'}]$, and $|k| \in [e^{\ell}, e^{\ell+1}]$ for some $\ell \in \Z_{\ge 0}, \ell \le s'$. Let $C' \subset \fC$ be another hyperbola with the following equation:
    \[ X =   \frac{-Y}{w'_1 Y + k'},\]
    where $|w'_1| \in [e^{s' - 10 C_1 \epsilon_1 s'}, \beta e^{s'}]$. 
    \par (1) For $0< A < e^{2s'-C_1\epsilon_1 s'}$, if $C'$ is contained in the $A e^{-3s'}$ neighborhood of $C$, then we have 
    \begin{equation}\label{eq:w1w'1}
    |w_{1} - w'_{1}| \le Ae^{-s'+2\ell}, \text{ and } |k - k'| \le Ae^{-2s' +2\ell}.
    \end{equation}
    Conversely, if \eqref{eq:w1w'1} holds, then $C'$ is contained in the $A e^{-3s'}$ neighborhood of $C$.
     \par (2) Suppose $C'$ is not contained in the $e^{-3s' + 11 C_1 \epsilon_1 s'}$ neighborhood of $C$. For $0< A < e^{2s'-C_1\epsilon_1 s'}$, if we have
     \begin{equation}\label{eq:w1-w1'}
     |w_{1} - w'_{1}| \in \left[Ae^{-s'+2\ell} , Ae^{-s'+2\ell+1}\right],
     \end{equation}
     and there exists $ \vv p' \in C'$ such that $\|\vv p'\| \ge e^{-s' -10 C_1 \epsilon_1 s'}$ and $\vv p'$ is contained in the $\beta^{-100} e^{-3s'}$ neighborhood of $C$, then $C'$ is contained in the $\beta^{-100}A e^{-3s'}$ neighborhood of $C$. Moreover, if $\vv p'_1, \vv p'_2\in C'$ satisfy that $\|\vv p'_1\| , \|\vv p'_2\| \ge e^{-s' -10 C_1 \epsilon_1 s'}$ and $\vv p'_1, \vv p'_2$ are both contained in the $\beta^{-100} e^{-3s'}$ neighborhood of $C$, then 
     $$\|\vv p'_1 - \vv p'_2\| \le A^{-1}  e^{-s' + 11 C_1 \epsilon_1 s'}.$$
\end{lemma}
\begin{figure}
\centering
\begin{tikzpicture}
\draw [gray!50]  (0,0) -- (0,11) -- (10,11) -- (10,0)  -- cycle;
\draw [blue] plot [smooth] coordinates {(0,0) (2.5, 4) (5,7) (7.5, 9) (10,10) };
\draw [red] plot [smooth] coordinates {(0,0.5) (2.5, 4.5) (5,7.5) (7.5, 9.5) (10,10.5) } node[above left]{$C$};
\draw [blue] plot [smooth] coordinates {(0,1) (2.5, 5) (5,8) (7.5, 10) (10,11) };
\draw [red] plot [smooth] coordinates {(0,0.5) (2.5, 4.9) (5,7.9) (7.5, 9.5) (10,10.2) }node[below left]{$C'$};
\node at (7.5,9.7) { $\vv p'$};
\node at (-0.6,0.5) { $(0,0)$};

\end{tikzpicture}
\caption{Lemma \ref{lm:hyperbola-plane}, part (2) --- if there is a $\vv p' \in C'$, not too close to the origin, near $C$ then all of $C'$ is within a comparable distance to~$C$.}
\end{figure}
\begin{proof}

\par Let us first prove (1). Our assumption is that for all $Y \in [\beta e^{-s'}]$,
  \[ \left|\frac{Y}{w_{1} Y + k} - \frac{Y}{w'_{1}Y + k'}  \right| \le  Ae^{-3s'}.\]
  This implies that 
  \[ |Y| \cdot |w_{1} - w'_{1}| \frac{\left| Y + \frac{k' - k}{w_{1} - w'_{1}} \right|}{|w_{1} Y + k| \cdot |w'_{1} Y + k'|} \le  Ae^{-3s'}. \]
  Then it is easy to get that $|k'| \in [e^{\ell-1}, e^{\ell+2}]$.
  \par Let us consider the following two subcases:
  \par \textbf{Subcase 1}: $|k - k'| < e^{-s'}|w_{1} - w'_{1}|$;
  \par \textbf{Subcase 2}: $|k - k'| \ge e^{-s'}|w_{1} - w'_{1}|$.
  \par For \textbf{Subcase 1}, we have 
  \[ |w_{1} - w'_{1}| e^{-2s'  - 2\ell} \le A e^{-3s' }, \]
  hence
  \[ |w_{1} - w'_{1}| \le Ae^{-s' +2\ell}. \]
  Moreover
  \[ |k - k'| <  e^{-s'}|w_{1} - w'_{1}| \le  A e^{-2s'+2\ell},\]
  establishing \eqref{eq:w1w'1}.

  For \textbf{Subcase 2}, we have 
 \[ |k - k'| e^{-s'  - 2 \ell} \le Ae^{-3s'}, \]
 implying
 \[ |k - k'| \le A e^{-2s'+2\ell}. \]
 Then 
 \[ |w_{1} - w'_{1}| \le e^{s'} |k - k'| \le Ae^{-s'+2\ell}. \]
 This proves \eqref{eq:w1w'1} and hence the first part of (1).
 \par Now let us assume \eqref{eq:w1w'1}, we want to prove that for any $Y \in [\beta e^{-s'}]$, 
  \[ \left|\frac{Y}{w_{1} Y + k} - \frac{Y}{w'_{1}Y + k'}  \right| \le  Ae^{-3s'}.\]
  This is equivalent to 
  \[ |Y| \cdot |w_{1} - w'_{1}| \frac{\left| Y + \frac{k' - k}{w_{1} - w'_{1}} \right|}{|w_{1} Y + k| \cdot |w'_{1} Y + k'|} \le  Ae^{-3s'}. \]
Note that the left-hand side is bounded from above by 
\begin{equation} 
\label{eq:y-y1-lhs}
e^{-s'- 2\ell} |w_{1} - w'_{1}| \cdot  \left| Y + \frac{k' - k}{w_{1} - w'_{1}} \right|.  \end{equation}
  Let us consider the 2 subcases given as above. 
 \par For \textbf{Subcase 1}, we have
 \begin{align*}  
 |w_{1} - w'_{1}| \cdot  \left| Y + \frac{k' - k}{w_{1} - w'_{1}} \right| &\le e^{-s'}|w_{1} - w'_{1}|  \\ 
                          &\le e^{-s'}Ae^{-s' +2\ell} = A e^{-2s' + 2\ell}.  \end{align*}
  For \textbf{Subcase 2}, we have 
  \begin{align*}  
 |w_{1} - w'_{1}| \cdot  \left| Y + \frac{k' - k}{w_{1} - w'_{1}} \right| &\le |k - k'|  \le A e^{-2s' + 2\ell}.  \end{align*}
This shows that for both subcases we have \eqref{eq:y-y1-lhs} is bounded from above by $A e^{-3s'}$, which implies the second statement of (1).
 \par Let us prove (2). By the assumptions, we have that there exists 
 $$\tilde Y \in [e^{-s' - 10 C_1 \epsilon_1 s'}, \beta e^{-s'}]$$ 
 such that 
  \[ \left|\frac{\tilde Y}{w_{1} \tilde Y + k} - \frac{\tilde Y}{w'_{1}\tilde Y + k'}  \right| \le  \beta^{-100} e^{-3s'},\]
  which implies
  \[ |\tilde Y| \cdot |w_{1} - w'_{1}| \frac{\left| \tilde Y + \frac{k' - k}{w_{1} - w'_{1}} \right|}{|w_{1} \tilde Y + k| \cdot |w'_{1} \tilde Y +  k'|} \le  \beta^{-100}e^{-3s'}. \]
  It is easy to see that $|k'| \in [e^{\ell -1}, e^{\ell +1}]$. 
 \par We claim that 
   \begin{equation}\label{eq:k over w claim}
   \left| \frac{k' - k}{w'_{1} - w_1} \right| \le e^{-s'}.
   \end{equation}
   In fact, if this is not the case, we have, 
   \[ \left|  \frac{k' - k}{w'_{1} - w_1} \right| \ll \left| \tilde Y + \frac{k' - k}{w'_{1} - w_1} \right|, \]
   which implies that 
   \[ \frac{| \tilde Y|\cdot |k'-k|}{|w_1 \tilde Y + k| \cdot |w'_{1} \tilde Y + k'|} \le \beta^{-100} e^{-3s'}. \]
   Thus we have 
   \[ |k' - k| \le e^{-2s' +2\ell + 11 C_1 \epsilon_1 s'}. \]
   Then for any $Y' \in [\beta e^{-s'}]$, we have 
   \begin{align*}
       \left|\frac{Y'}{w_1 Y' + k} - \frac{Y'}{w'_{1}Y' + k'}  \right| &\le \frac{|Y'|\cdot |k'-k|}{|w_1 Y' + k| \cdot |w'_{1} Y' + k'|} \\ 
       &\le e^{-s'} e^{-2s' +2\ell + 11 C_1 \epsilon_1 s'} e^{-2\ell} = e^{-3s' + 11 C_1 \epsilon_1 s'}.
   \end{align*}
   This implies that $C'$ is contained in the $e^{-3s' + 11 C_1 \epsilon_1 s'}$ neighborhood of $C$, which contradicts our assumption. This proves the claim~\eqref{eq:k over w claim}.
  \par From \eqref{eq:w1-w1'} and \eqref{eq:k over w claim} we obtain that for any $Y \in [\beta e^{-s'}]$
  \[ |Y| \cdot |w_{1} - w'_{1}| \frac{\left| Y + \frac{k' - k}{w_{1} - w'_{1}} \right|}{|w_{1} Y + k| \cdot |w'_{1} Y + k'|} \le  A \beta^{-100} e^{-3s'}.
  \]
   This proves that $C'$ is in the $\beta^{-100}A e^{-3s'}$ neighborhood of $C$.
   \par Finally, if $\tilde Y' \in [e^{-s' - 10 C_1 \epsilon_1 s'}, \beta e^{-s'}]$ also satisfies that 
   \[ \left|\frac{\tilde Y'}{w_{1} \tilde Y' + k} - \frac{\tilde Y'}{w'_{1}\tilde Y' + k'}  \right| \le  \beta^{-100} e^{-3s'},\]
   then we have 
   \[ \left| \tilde Y + \frac{k' - k}{w_{1} - w'_{1}} \right| \le  A^{-1} \beta^{-100} e^{-s' + 10 C_1 \epsilon_1 s'},   \]
   and 
   \[ \left| \tilde Y' + \frac{k' - k}{w_{1} - w'_{1}} \right| \le  A^{-1} \beta^{-100} e^{-s' + 10 C_1 \epsilon_1 s'}.  \]
   This implies that 
   \[ | \tilde Y - \tilde Y'| \le A^{-1}  e^{-s' + 11 C_1 \epsilon_1 s'}. \]
   This completes the proof.
 
\end{proof}
\par Let us apply Lemma \ref{lm:hyperbola-plane} to prove the following lemmata:
\begin{lemma}
  \label{lm:integrable-surface}
  For $y \in \fJ\cap \Sigma^r , z \in \cI\cap\Sigma^r$, let us denote 
  \[\cC = \cC(\cT(y), z)\subset \fC =  \left[1.2 \beta e^{-s'}\right]^2.\] 
  Suppose that $\cC$ has equation 
  \[ (X- X_0) (Y- Y_0) + v_1^{-1} (X- X_0) + w_1^{-1} (Y- Y_0) = 0, \]
  with $\| (v_1 , w_1) \| \ge e^{s' - 10 C_1 \epsilon_1 s'}$, and 
  $$\max\{|v_1/w_1|, |w_1/v_1|\} \in [e^\ell, e^{\ell + 1}].$$
  Then for $\ell < s'$, there exist at most $e^{2s' + 2\ell + 60C_1\epsilon_1 s'}$ different $y' \in  \fJ\cap\Sigma^r$ such that 
  \[\cC(\cT(y'), z) \approx \cC.\]
  \par For $\ell \ge s' $, there are at most $e^{4s' + 60 C_1 \epsilon_1 s'}$ different $y' \in \fJ \cap \Sigma^r$ such that 
  \[ \cC(\cT(y'), z) \approx \cC. \]
\end{lemma}

\begin{proof}

 \par We first consider the case $\ell < s'$. For $A > 1$, let us denote by $\cC[A]$ the $Ae^{-3s'}$ neighborhood of $\cC$.
  \par Let us fix $z_0 \in \cI_z$ such that $\fD_z(z_0) = (X'_0, Y'_0) \in \cC[e^{11 C_1 \epsilon_1 s'}]$, and consider all $y' \in \fJ\cap \Sigma^r$ such that 
  \[ \cC \approx \cC(\cT(y'), z). \] 
  Assume in addition that $y'$ is such that there are coefficients $v_{1, y'},w_{1, y'}$ of size $\leq \beta e^{s'}$ and a point $y'_0 \in \Omega(y')$ such that
  \[ z_0 = \exp(v_{1, y'} \vv v_1 + w_{1, y'} \vv w_1) u(r_{y'}) y'_0. \]
  Let us denote by $\cM(\cC, z_0)$ the collection of those $y'$'s. 
  \par Fix a $y \in \cM(\cC,z_0)$ and consider the curve neighborhood $\cC'$ with equation  
  \[ (X- X'_0) (Y- Y'_0) + v_{1,y}^{-1} (X- X'_0) + w_{1,y}^{-1} (Y- Y'_0) = 0. \]
  Then it is easy to see that for any $y' \in \cM(\cC, z_0)$, \[\cC(\cT(y'), z) \subset \cC'[2e^{11 C_1 \epsilon_1 s'}].\]
   \par Without loss of generality, let us assume that $(X'_0, Y'_0) = \vv 0$. Then the equation reads
  \[ XY + v_{1,y}^{-1} X + w_{1,y}^{-1} Y = 0. \]
  Then for any $y' \in \cM(\cC, z_0)$, if $y_0'$ is the corresponding point satisfying
  \[ z_0 = \exp(v_{1, y'} \vv v_1 + w_{1, y'} \vv w_1) u(r_{y'}) y_0', \]
  we get that the curve
  \begin{equation}
  \label{eq:prime curve}
  XY + v_{1, y'}^{-1} X + w_{1, y'}^{-1} Y = 0
  \end{equation}
 is contained in $ 2e^{-3s' + 10 C_1 \epsilon_1 s'}$ neighborhood of the curve 
  \begin{equation}
  \label{eq:not prime curve}XY + v_{1,y}^{-1} X + w_{1,y}^{-1} Y = 0.
  \end{equation}
  Let us assume that $|w_{1,y}| \ge |v_{1,y}|$. The opposite case can be handled similarly. 
  \par Let us denote 
  \[ k = \frac{w_{1,y}}{v_{1,y}}, \text{ and } k' = \frac{w_{1, y'}}{v_{1, y'}}. \]
  Then it is easy to see that $k \in [e^\ell, e^{\ell +1}]$ and $ k' \in [e^{\ell-1}, e^{\ell +2}]$. Note that the equations of the curves~\eqref{eq:prime curve} and~\eqref{eq:not prime curve} can be written as 
  \[ X = \frac{-Y}{w_{1,y} Y + k}, \text{ and } X = \frac{-Y}{w_{1,y'}Y + k'} \]
  respectively.
  Then we have for all $Y \in [\beta e^{-s'}]$,
  \[ \left|\frac{Y}{w_{1,y} Y + k} - \frac{Y}{w_{1,y'}Y + k'}  \right| \le  2e^{-3s'+11C_1\epsilon_1 s'}.\]
  By Lemma \ref{lm:hyperbola-plane}(1) with $A =2 e^{11C_1\epsilon_1 s'}$, we have that 
  \[ |w_{1,y} - w_{1, y'}| \le 2e^{-s' + 11C_1 \epsilon_1 s'+2\ell}, \text{ and } |k - k'| \le 2e^{-2s' + 11C_1 \epsilon_1 s'+2\ell}. \]

Then there are at most $ e^{40 C_1 \epsilon_1 s' + 2\ell}$ different $y' \in \fJ \cap \Sigma^r$
  contained in $\cM(\cC, z_0)$. By running over all $z_0 \in \cI_z$ such that $\fD_z(z_0) \in \cC[e^{11C_1 \epsilon_1 s'}]$, we have that 
 there are at most 
 \[ e^{2s' + 2\ell + 60 C_1 \epsilon_1 s'} \] 
 different $y' \in \fJ\cap \Sigma^r$ such that $\cC(\cT(y'), z) \approx \cC$. This proves the statement for $\ell < s'$.
\par Now let us turn to the case $\ell \ge s'$. In this case, by Remark \ref{rmk:concentrated-surface}, we can consider the corresponding straight lines 
\[ X = -Y/k, \text{ and } X = - Y/k'. \]
Then we have for any $y \in [\beta e^{-s'}]$,
\[ \left| Y/k - Y/k' \right| \le 2e^{-3s'+11C_1 \epsilon_1 s'}, \]
which implies that 
\[  |k - k'| \cdot |kk'|^{-1}  \le  2e^{-2s' + 11C_1\epsilon_1 s'}. \]
This implies that for any fixed $w_{1, y'}$, the desired $v_{1, y'}$ is lying in 
$$ \frac{w_{1, y'}}{k} + \left[2e^{-s' + 11C_1\epsilon_1 s'}\right].$$ 
Noting that $w_{1, y'} \in [\beta e^{s'}]$, we have that there are at most $ e^{40 C_1 \epsilon_1 s' + 2s'}$ different $y' \in \fJ \cap \Sigma^r$
  contained in $\cM(\cC, z_0)$. By running over all $z_0 \in \cI_z$ such that $\fD_z(z_0) \in \cC[e^{11C_1 \epsilon_1 s'}]$, we have that 
 there are at most 
 \[ e^{4s' + 60 C_1 \epsilon_1 s'} \] 
 different $y' \in \fJ\cap \Sigma^r$ such that $\cC(\cT(y'), z) = \cC$.
 \par This completes the proof.
\end{proof}

\begin{lemma}
    \label{lm:determine-curve-nbhd}
    \par Let us fix constants $\ell, m \ge 1$. 
    Let $\cC \subset \fC$ be the $\beta^{-100} e^{-3s'}$-neighborhood of the curve 
\[ (X- X_0) (Y- Y_0) + v_1^{-1} (X- X_0) + w_1^{-1} (Y- Y_0) = 0, \]
with $\|(v_1, w_1)\| \ge e^{s' - 10C_1 \epsilon_1 s'}$ and 
\[ \max\{|v_1/w_1|, |w_1/v_1|\} \in [e^\ell, e^{\ell + 1}].\]
Let $\cC'$ be the $\beta^{-100} e^{-3s'}$-neighborhood of the curve with the following equation
\[(X- X_0) (Y- Y_0) +  {v'_1}^{-1}(X- X_0) +  {w'_1}^{-1}(Y- Y_0) = 0.\]
Suppose we have that $\cC$ and $\cC'$ are not comparable. Moreover, suppose we have the following:
\[ \max\{|v'_1/w'_1|, |w'_1/v'_1|\} \in [e^\ell, e^{\ell + 1}],\]
\[ \max\{ |v'_1 - v_1|, |w'_1 - w_1| \} \in [e^{s'-m}, e^{s'-m+1}], \]
and that there exists $(X_1, Y_1) \in \cC \cap \cC'$ with 
\[ \max\{ |X_0 - X_1|, |Y_0 - Y_1| \} \ge e^{-s' - 10 C_1 \epsilon_1 s'}. \]
Then $m + 2 \ell \le 2s' - 10 C_1 \epsilon_1 s'$, and $\cC'$ is contained in the $e^{-s'-m-2\ell }$ neighborhood of $\cC$.
\end{lemma}
\begin{proof}
   \par Without loss of generality, we can assume that $(X_0, Y_0) = \vv 0$ and $|w_1/v_1| \in [e^\ell, e^{\ell+1}]$. Then we have $|w'_1/v'_1| \in [e^\ell, e^{\ell+1}]$, $|w_1 - w'_1| \in [e^{s'-m}, e^{s'-m+1}]$, and there exists $(X_1, Y_1) \in \cC \cap \cC'$ with $|Y_1| \ge e^{-s'-10C_1 \epsilon_1 s'}$.
   \par Let us rewrite the equation of $\cC$ as 
   \[ X = \frac{-Y}{w_1 Y + k}, \text{ where } k = w_1/ v_1, \]
   and the equation of $\cC'$ as 
   \[ X = \frac{-Y}{w'_1 Y + k'}, \text{ where } k' = w'_1/ v'_1. \]
   Since $(X_1, Y_1) \in \cC \cap \cC'$, we have 
   \[ \left|\frac{Y_1}{w_1 Y_1 + k} - \frac{Y_1}{w'_{1}Y_1 + k'}  \right| \le \beta^{-100}e^{-3s'}. \]
   
  By Lemma \ref{lm:hyperbola-plane}(2) with $A = e^{2s' - 2\ell -m}$ we have that $\cC'$ is contained in the $e^{-s' - m - 2\ell}$-neighborhood of $\cC$. Recalling that by assumption $\cC'$ and $\cC$ are not comparable, we conclude that
  \[ m+ 2 \ell \le 2s' - 10 C_1 \epsilon_1 s'. \]
\end{proof}
\begin{lemma}
\label{lm:counting-curves-nbhd}
\par Let us fix constants $m, \ell \ge 1$ with $m+2\ell \le 2s' - 10 C_1 \epsilon_1 s'$.
Let $\cC \subset \fC$ be the $\beta^{-100} e^{-3s'}$-neighborhood of the curve 
\[ (X- X_0) (Y- Y_0) + v_1^{-1} (X- X_0) + w_1^{-1} (Y- Y_0) = 0, \]
with $\|(v_1, w_1)\| \ge e^{s' - 10C_1 \epsilon_1 s'}$ and 
\[ \max\{|v_1/w_1|, |w_1/v_1|\} \in [e^\ell, e^{\ell + 1}].\]
Let $\fN( \cC, (X_0, Y_0), m)$ denote a collection of non-comparable $\beta^{-100} e^{-3s'}$-curve neighborhoods with equations of the form
\[(X- X_0) (Y- Y_0) +  {v'_1}^{-1} (X- X_0) +  {w'_1}^{-1}(Y- Y_0) = 0,\]
and contained in the $e^{-s'-m-2\ell}$ neighborhood of $\cC$. Then 
\[ |\fN( \cC, (X_0, Y_0), m) | \le e^{4s' - 2m - 4 \ell + 100 C_1 \epsilon_1 s'}.\]
Moreover, for $\cC' \in \fN( \cC, (X_0, Y_0), m)$ with corresponding parameters $v'_1,w'_1$, let $m'$ denote the number satisfying
\[ \max \{|w'_1 - w_1|, |v'_1 - v_1| \} \in [e^{s'-m'}, e^{s'-m'+1}]. \]
Then the intersection
$$\cC\cap \cC' \cap \left\{ (X, Y): \|(X, Y) - (X_0, Y_0)\| \ge e^{-s' - 10 C_1 \epsilon_1 s'} \right\}$$
contains at most $e^{m'+ 2\ell + 11 C_1 \epsilon_1 s'}$ different points from the set 
\[ \left\{ \fD_z(z') : z' \in \cI_z \right\}. \]

\end{lemma}
\begin{proof}
\par Without loss of generality, we can assume that $(X_0, Y_0)= \vv 0$, and $|w_1| \ge |v_1|$.
\par Let us write the equation of $\cC$ as
\[ X = \frac{-Y}{w_1 Y + k}, \text{ where } k = w_1/ v_1. \]
Then $|k| \in [e^\ell, e^{\ell +1}]$. Suppose $\cC'$ with equation
\[ X = \frac{-Y}{w'_1 Y + k'}  \text{ where } k' = w'_1/v'_1 \]
is in $\fN(\cC, \vv 0, m)$. Then we have for all $Y \in [ \beta e^{-s'}]$,
\[ \left|\frac{Y}{w_1 Y + k} - \frac{Y}{w'_{1}Y + k'}  \right| \le  e^{-s' - m - 2\ell}.\]
By Lemma \ref{lm:hyperbola-plane} (1) with $A = e^{2s' - m - 2 \ell}$, we have
\begin{equation} 
\label{eq:counting-conditions}
|w_1 - w'_{1}| \le  e^{s' -m }, \text{ and } |k - k'| \le e^{-m }. \end{equation}

Note that by Lemma \ref{lm:hyperbola-plane} (1) with $A = e^{11C_1 \epsilon_1 s'}$, if 
$$|w'_1 - \bar w'_1| \le  e^{-s'+ 11C_1 \epsilon_1 s'+2\ell}$$ 
and 
$$|k' - \bar k'| \le  e^{-2s'+ 11C_1 \epsilon_1 s'+2\ell},$$
the corresponding curves 
\[ X = \frac{-Y}{w'_1 Y + k'} \]
and 
\[ X =  \frac{-Y}{\bar w'_1 Y + \bar k'} \]
are comparable.
Thus, the conditions \eqref{eq:counting-conditions} give at most 
\[ e^{4s' - 2m - 4\ell} \]
different curve neighborhoods, which are not comparable to each other. 
\par Let us prove the last statement. By Lemma \ref{lm:hyperbola-plane} (2) with $A = e^{2s' -2\ell -m'}$, we have that there exists 
$(X_1, Y_1) \in \cC$ such that the intersection
\[\cC\cap \cC' \cap \left\{ (X, Y): \|(X, Y) - (X_0, Y_0)\| \ge e^{-s' - 10 C_1 \epsilon_1 s'} \right\}\]
is contained in the $e^{-3s' + m' +2\ell + 11 C_1 \epsilon_1 s'}$-neighborhood of $(X_1, Y_1)$, denoted by $B(X_1, Y_1)$.
Then the statement follows from the fact that the intersection 
\[ \cC \cap B(X_1, Y_1) \]
contains at most $e^{m' + 2\ell + 11 C_1 \epsilon_1 s'}$ different elements from 
\[ \left\{ \fD_z(z') : z' \in \cI_z \right\}. \]
This completes the proof.
\end{proof}
We also need the following lemma:
\begin{lemma}
    \label{lm:curve-in-N}
    Given $(X_0, Y_0) \in \fC = \left[1.2\beta e^{-s'}\right]^2$, and $1 < \ell \le s'$, let us denote 
    \[ \cN^\ell = \cN^{\ell}(X_0, Y_0) := (X_0, Y_0) + [\beta e^{-s' - \ell}] \times [\beta e^{-s'}]. \]
Suppose we have a cross-section $\Sigma^r$ and $z \in \cI \cap \Sigma^r$, $y \in \fJ\cap\Sigma^r$ such that 
$$\cC = \cC(\cT(y),z) \subset \cN^{\ell}.$$ 
Then there exists $z' \in \Theta(z)$ only depending on $\cN^\ell$ (and not on $\cC$) such that 
$\fD_z(z') \in \cN^{\ell}$ and 
     \[ \Omega'(y) \subset a(2s')Q(\fr_1, \ell-1, \beta ) B^{A_0 H}(\beta) a(-2s') z'. \]
\end{lemma}

\noindent 
Cf.~ equation \eqref{eq:def-thin} for the definition of~$Q(\fr_1, \ell, \beta )$.

\begin{proof}
    \par First, we note that there are $\tilde y \in \Omega(y)$ and $\tilde z \in \Theta(z)$ such that 
    \begin{equation} 
    \label{eq:tildey-tildez}
    \tilde y =u(r) \exp(\vv w^+)  \tilde z, \end{equation}
    and 
    \[ \tilde z = \exp(\vv w^-) z \]
    where $\vv w^+ = v_1 \vv v_1 + w_1 \vv w_1 \in B_{\vv r^+}(\beta e^{s'})$, $r \in [\beta e^{2s'}]$, and 
    $$\vv w^- =  X_1 \vv v_2 + Y_1 \vv w_2 \in \vv{r}^-.$$
    By Lemma \ref{lm:concentrated-surface}, we have that $\cC = \cC(\cT(y), z)$ is the $\beta^{-100} e^{-3s'}$-neighborhood of the following curve ($v_1,w_1$ as above):
    \[ (X-X_1)(Y - Y_1) + v_1^{-1} (X-X_1) + w_1^{-1} (Y-Y_1) = 0.\]
    Since $\cC \subset \cN^\ell$, we have that 
    \[ (X_1, Y_1) \in \cN^\ell.\]
    Moreover, for any $Y \in [\beta e^{-s'}]$, we have
    \[ |X - X_1| = \frac{|Y-Y_1|}{|w_1 (Y-Y_1) + k|} \le \beta e^{-s' - \ell},   \]
    where $k = w_1/v_1$. Note that there exists $Y \in [\beta e^{-s'}]$ such that $|Y- Y_1| \ge \beta e^{-s'}$, we get 
    \[ |w_1 (Y-Y_1) + k| \ge e^{\ell}. \]
    The above bounds on $w_1$, $|Y-Y_1|$ imply that $|w_1(Y-Y_1)| \le \beta^2$, hence
    \[ |k| = |w_1/v_1| \ge e^{\ell}. \]
    Therefore, we have 
    \begin{equation}\label{eq:pv1}
    |v_1| \le e^{-\ell} |w_1| \le \beta e^{s' - \ell}. 
    \end{equation}
    \par Let $z' \in \Theta(z)$ be the following element:
    \[ z' = \exp(\vv w_0^-) z,\]
    where $\vv w_0^- = X_0 \vv v_2 + Y_0 \vv w_2$. Then we have that 
    \begin{equation}
    \label{eq:tildez-z'}
    \tilde z =  \exp(\tilde{\vv w}^-) u^\ast (r^\ast)z' \end{equation}
    where $|r^\ast| \le \beta e^{-2s'}$ and $\tilde{\vv w}^- = \tilde X \vv v_2 + \tilde Y \vv w_2 $ with $(\tilde X, \tilde Y) \in \cN^\ell (\vv 0)$. 
    \par By \eqref{eq:tildey-tildez} and \eqref{eq:tildez-z'}, we get 
    \[ \tilde y =u(r) \exp(\vv w^+) \exp(\tilde{\vv w}^-) u^\ast (r^\ast)z'  \]
    where $\vv w^+ \in B_{\vv r^+}(\beta e^{s'})$ with $\|p_{\vv v_1} (\vv w^+)\| \le \beta e^{s' - \ell}$ by \eqref{eq:pv1}, and $\tilde{\vv w}^- \in B_{\vv r^-}(\beta e^{-s'})$ with $\|p_{\vv v_2} (\tilde{\vv w}^-)\| \le \beta e^{-s' - \ell}$ since $(\tilde X, \tilde Y) \in \cN^\ell (\vv 0)$. Then we can write 
    \[ \tilde y = a(2s' ) u(\bar r) \exp(\bar{\vv w}^+) \exp(\bar{\vv w}^-) u^\ast (\bar r^\ast) a(-2s')z', \]
    where $|\bar r|, |\bar r^\ast| \le \beta$, $\bar{\vv w}^+ \in B_{\vv r^+}(\beta)$ with $\|p_{\vv v_1} (\bar{\vv w}^+)\| \le \beta e^{-\ell}$, and  $\bar{\vv w}^- \in B_{\vv r^-}(\beta)$ with $\|p_{\vv v_2} (\bar{\vv w}^-)\| \le \beta e^{-\ell}$. Note that these estimates imply that 
    \[ \|[\bar{\vv w}^-, \bar{\vv w}^+]\| \le \beta^2 e^{-\ell}. \]
By Lemma \ref{lm:product-w}, we have 
\[ \exp(\vv w^+) \exp(\tilde{\vv w}^-) = \exp(\bar{\vv w}) \exp(\bar{\vv h}), \]
where $\bar{\vv w} \in B_{\fr_1 + \fr_2} (\beta)$ with $\|p_{\vv r_1}(\bar{\vv w})\| \le \beta e^{-\ell}$, and $\bar{\vv h } \in B_{\fh + \R \fa_0} (\beta e^{-\ell})$. 
Thus, we have
\begin{align*} 
\tilde y &=  a(2s' ) u(\bar r) \exp(\bar{\vv w}) \exp(\bar{\vv h}) u^\ast (\bar r^\ast) a(-2s')z' \\
         &\in a(2s')Q(\fr_1, \ell, \beta ) B^{A_0 H}(\beta) a(-2s') z',
\end{align*}
which implies that 
\[ \Omega'(y) \subset a(2s')Q(\fr_1, \ell-1, \beta ) B^{A_0 H}(\beta) a(-2s') z'. \]
\end{proof}
\begin{remark}
    \label{rmk:sl2r2}
Lemma \ref{lm:curve-in-N} roughly says that given $z \in \cI\cap \Sigma^r$ and 
$$ \cN^\ell = \cN^\ell(X_0, Y_0) \subset \fC$$ 
as above, 
all $y \in \fJ\cap \Sigma^r$ satisfying that 
\[ \cC(\cT(y), z) \subset \cN^\ell\] 
is contained in a $a(2s')$-translate of a $\beta e^{-\ell+1}$-neighborhood of a piece of some $\SL(2,\R)\ltimes \R^2$-orbit of size $\beta$.
\end{remark}

\par Now we are now ready to prove Proposition \ref{prop:dimension-improvement-structured-component}
\begin{proof}[Proof of Proposition \ref{prop:dimension-improvement-structured-component}]
\par Assume in contradiction that 
  \[ \tilde \mu (\bar \cR^{\fs}_i) > \beta^{22}. \]
   Let $\fZ \subset \Pi(x)$ be the subset from Proposition \ref{prop:dimension-does-not-drop}. 
   By replacing $\bar \cR^{\fs}_i$ with $\bar \cR^{\fs}_i \setminus \mathfrak{Z}$, we can assume 
   that \eqref{eq:upper-theta} holds for any $z \in \cI \cap \bar\cR^{\fs}_i$. 
   
   Thus we may assume:
    \begin{equation} 
    \begin{gathered}
        \cI \cap \bar \cR^{\fs}_i \neq \emptyset \\       \tilde \mu ( \Theta'(z)) \le  e^{-2s'} e^{-(d_i - C_1 \epsilon_1) s'} \qquad \text{for any $z \in \cI \cap \bar \cR^{\fs}_i$}
    \end{gathered}
    \label{eq:upper-theta-2}
 \end{equation} 
 {
   
  \par Let us fix a $z \in \cI\cap \bar \cR^{\fs}_i$. By the definition of $\bar \cR^{\fs}_i$ (see Definition \ref{def:structure-random-decompostion}), we have that $\Theta'(z)$ is highly structured. Thus, there exists $r=r(z)$ such that $z \in \Sigma^r$ and 
  \begin{equation}\label{eq:SSzr}
      \tilde \mu (\SS(z,r)) \ge  e^{-2s' - 10 C_1 \epsilon_1 s'}
  \end{equation} 
  Recall that by \eqref{eq:upper-omega}, for any $y$,
  \[
      \tilde \mu(\Omega'(y)) \le \beta^{-15}e^{-2 s'} e^{-2 d_i s'}.
  \]
Recall also that $\SS(z,r)$ is the union of $\Omega'(y)$ with $y \in \HS(z,r)$, i.e.\ those $y\in\fJ\cap\Sigma^r$ for which $\cC(\cT(y), z)$ is highly concentrated. Then by \eqref{eq:SSzr} and \eqref{eq:upper-omega}, we have the following:
\begin{itemize}
    \item[$\clubsuit$] there exist at least $e^{2d_i s' -15C_1 \epsilon_1 s'}$ elements $y$ in $\HS(z,r)$.
\end{itemize}

 \noindent
  We say that a highly concentrated curve neighborhood $\cC=\cC(\cT(y), z)$ \emph{covers}~$\Omega'(z')$ ($z'\in \cI_z$) if $\fD_z(z') \in \cC$. 
  \par To get a contradiction to \eqref{eq:upper-theta-2}, it is enough to show that if $\clubsuit$ holds, the union
  \begin{equation}\label{eq:HSunion}
  \bigcup_{y \in \HS(z,r)}\cC(\cT(y),z)
  \end{equation}
  (which by definition corresponds to a subset of $\Theta'(z)$) must cover 
  at least $e^{d_i s' + 20 C_1 \epsilon_1 s'}$ different full $\Omega'(z')$ with $z' \in\cI_z$. 
  Indeed, if this holds, then by the definition of full neighborhoods in~\eqref{eq:full-omega}, we have that 
  \[ \tilde \mu(\Theta'(z)) \ge e^{-2s'} e^{-(d_i - 12 C_1 \epsilon_1) s'}, \]
  which contradicts \eqref{eq:upper-theta-2}.
  \medskip
  
  \par Now we aim to show that under the assumption $\clubsuit$, 
  \begin{equation}\label{eq:nowweaim}
       \bigcup_{y \in \HS(z,r)}\!\!\!\cC(\cT(y),z)\ \ \text{covers $\geq e^{d_i s' + 20 C_1 \epsilon_1 s'}$ different full $\Omega'(z')$,  $z' \in \cI_z$.}
  \end{equation}
  
   Given a highly concentrated $\cC$ with equation
  \[ (X- X_0) (Y- Y_0) +   v_1^{-1}(X- X_0) +  w_1^{-1}(Y- Y_0) = 0, \]
  let us denote $k(\cC) := \max\{ |w_1/v_1|, |v_1/w_1| \}$.
  By dyadic pigeonholing, we have that there is some $\ell$ for which there are at least $e^{2d_i s' -16C_1 \epsilon_1 s'}$ different $y \in \fJ\cap\Sigma^r$ such that $\cC = \cC(\cT(y), z)$ satisfies $k(\cC) \in [e^\ell, e^{\ell+1}]$.  We divide the proof into two cases.

  \subsection*{The case \texorpdfstring{$\ell\geq s'$}{l<=s'}} 
By Remark \ref{rmk:concentrated-surface}.\eqref{item:line}, in this case the highly concentrated curve neighborhoods are neighborhoods of straight lines. Without loss of generality, we can assume that $|w_1/v_1| \in [e^\ell, e^{\ell+1}]$, in which case these highly concentrated curve neighborhoods are neighborhoods of almost vertical straight lines. 
Let $\cC=\cC(\cT(y_0), z)$ be a highly concentrated curve; the definition of highly concentrated implies that $\|(v_1,w_1)\|\geq e^{s'-10C_1\epsilon_1s'} $, hence Lemma \ref{lm:integrable-surface} can be applied. Thus there are at most $e^{4s' + 60 C_1 \epsilon_1 s'}$ different $y \in \fJ\cap \Sigma^r$ such that 
  \[ \cC(\cT(y), z) \approx \cC.\] 
  It follows that we can find a collection $\mathfrak{CT}$ of highly concentrated curve neighborhoods in $\Theta'(z)$ with at least 
  
  $$e^{2d_i s' -16C_1 \epsilon_1 s'}e^{-4s' -60 C_1 \epsilon_1 s'} =   e^{ 2 d_i  s' - 4 s'-76C_1 \epsilon_1 s'}$$ 
  elements such that any two curve neighborhoods from $\mathfrak{CT}$ are not comparable. 

  \par Now let us estimate the number of different full $\Omega'(z')$'s (with $z' \in \cI_z$) covered by highly concentrated curve neighborhoods from $\mathfrak{CT}$. 
  Our goal is to show that there are at least 
  \[e^{d_i s' + 20 C_1 \epsilon_1 s'}\]
  such $\Omega'(z')$'s. 
  \par Note that every highly concentrated curve neighborhood must cover at least $e^{2s' - 9 C_1 \epsilon_1 s'}$ full $\Omega'(z')$'s. Thus, counting with multiplicity, the curve neighborhoods from $\mathfrak{CT} $ cover at least 
  \[ e^{ 2 d_i  s' - 2 s' -85C_1 \epsilon_1 s'} \]
  full $\Omega'(z') \subset \Theta'(z)$. If every full $\Omega'(z')$ is counted with multiplicity at most 
  $$e^{d_i s' - 2 s' -105 C_1 \epsilon_1 s'},$$ 
  then we are done. 
  
  Therefore assume that:
  \begin{itemize}
      \item There exists a full 
  $\Omega'(z_1) \subset \Theta'(z)$ which is counted with multiplicity
  $\geq e^{d_i s' - 2 s' -105 C_1 \epsilon_1 s'}$; let $(X_0, Y_0)=\fD_z(z_1)$.
  \end{itemize}
  Let $\mathfrak{LL}\subset \mathfrak{CT}$ denote the set of highly concentrated curve neighborhoods passing through $(X_0, Y_0)$. Then 
  \[ |\mathfrak{LL}| \ge  e^{d_i s' - 2 s' -105 C_1 \epsilon_1 s'}.\]
  Recalling that in the case we are currently considering, namely of $\ell\geq s'$, these highly concentrated curve neighborhoods are neighborhoods of straight lines, as well as the definition of comparable in Definition~\ref{def:comparable}, we have that any two non-comparable curve neighborhoods $\cC$ and $\cC'$ passing through $(X_0, Y_0)$ do not intersect at any $(X, Y)$ with $|Y - Y_0| \ge e^{-s'-11C_1\epsilon_1 s'}$. Thus, the highly concentrated curve neighborhoods from $\mathfrak{LL}$ cover 
  at least 
  \[
  e^{d_i s' -120 C_1 \epsilon_1 s'}
  \]
  different full $\Omega'(z'')$'s. 
  Moreover, all these highly concentrated curve neighborhoods are contained in 
  $$\cN^{s'}_1=\cN^{s'}(X_0, Y_0) = (X_0, Y_0) + [\beta e^{-2s'}]\times [\beta e^{-s'}].$$
  \par Similarly to the definition of $\HS(z,r)$ and $\SS(z,r)$ (cf.\ 
  \eqref{eq:structure-part-theta-0} and \eqref{eq:structure-set-theta-0}), 
  let us define 
  \begin{equation}
  \label{eq:def-hszrn}
  \HS(z, r; \cN^{s'}_1) := \left\{ y \in \fJ\cap \Sigma^r: \cC(\cT(y), z) \subset \cN^{s'}_1 \ \text{and is highly concentrated}\right\}, \end{equation}
  and 
  \begin{equation} 
  \label{eq:def-sszrn}
  \SS(z, r; \cN^{s'}_1) := \bigcup_{y \in \HS(z, r; \cN^{s'}_1 )} \Omega'(y). \end{equation}
  We claim that 
  \begin{equation} 
  \label{eq:sszrN-small}
  \tilde \mu(\SS(z,r; \cN^{s'}_{1})) \le e^{-2s'}e^{-800 C_1 \epsilon_1 s'}.  \end{equation}
  Indeed, by Lemma \ref{lm:curve-in-N}, there exists $z' \in \Theta'(z)$ so that the following holds: for any $y \in  \HS(z, r; \cN_1^{s'} )$,  we have
  \[ \Omega'(y) \subset a(2s')Q(\fr_1, s'-1, \beta ) B^{A_0 H}(\beta) \tilde z,  \]
  where $\tilde z = a(-2s')z'$.
  Therefore, 
  \[ \SS(z, r; \cN_{1}^{s'}) \subset \left(a(2s')Q(\fr_1, s'-1, \beta ) B^{A_0 H}(\beta) \tilde z \right)\cap \Sigma^r. \]
  By our discussion at the beginning of \S \ref{sec-reduction}, Proposition \ref{prop:sl2xr2-closing-lemma} holds for $\mu_{\cF_i}$. Thus, we have 
  \begin{align*} 
  \tilde \mu\left(a(2s')Q(\fr_1, s'-1, \beta ) B^{A_0 H}(\beta) \tilde z\right) &\le \beta^{-15} \mu_{\cF_i} \left(Q(\fr_1, s'-1, \beta) B^{A_0 H}(\beta) \tilde z\right) \\ 
  &\le \beta^{-15}e^{-d_0 (s'-1) /2} < e^{-800 C_1 \epsilon_1 s'}.\end{align*}
  Note that 
  \[  \left(a(2s')Q(\fr_1, s'-1, \beta ) B^{A_0 H}(\beta) \tilde z\right) \cap \Sigma^r \]
  is a cross-section of $a(2s')Q(\fr_1, s'-1, \beta ) B^{A_0 H}(\beta) \tilde z$ along the $U$-direction. It follows that 
  \begin{align*}
      \tilde \mu\left(\SS(z,r; \cN_{1}^{s'})\right) &\le \tilde \mu\left(\left(a(2s')Q(\fr_1, s'-1, \beta ) B^{A_0 H}(\beta) \tilde z\right) \cap \Sigma^r\right) \\ 
        &\le e^{-2s'} \tilde \mu\left(a(2s')Q(\fr_1, s'-1, \beta ) B^{A_0 H}(\beta) \tilde z\right) \\ &\le e^{-2s'}e^{-800 C_1 \epsilon_1 s'}.
  \end{align*}
  This proves \eqref{eq:sszrN-small}. Thus, $\clubsuit$ holds with $\HS(z,r)$ replaced by 
  $$\HS(z,r) \setminus \HS(z, r;\cN^{s'}_{1}).$$ 
  By repeating the above argument with $\HS(z,r)$ replaced by 
  $$\HS(z,r) \setminus \HS(z, r;\cN^{s'}_{1}),$$ 
  we will get $\cN_{2}^{s'}$ disjoint with $\cN_{1}^{s'}$, and at least $e^{d_i s' -120 C_1 \epsilon_1 s'}$ 
  different $\Omega(z'')$'s with $ \fD_z(z'')\in \cN_{2}^{s'}$. This argument can be repeated for at least $e^{400 C_1 \epsilon_1 s'}$ times.  This gives at least 
  $$e^{d_i s'-120 C_1 \epsilon_1 s'} e^{400 C_1 \epsilon_1 s'} \ge  e^{d_i s' + 20 C_1 \epsilon_1 s'}$$
  different full $\Omega'(z')$'s. This establishes \eqref{eq:nowweaim}, completing the proof for the case $\ell \ge s'$.

\subsection*{The case \texorpdfstring{$\ell < s'$}{l<s'}} 
By Lemma \ref{lm:integrable-surface}, we have that for every highly concentrated curve neighborhood $\cC$, there exist at most 
  $$e^{2s' + 2\ell + 60C_1 \epsilon_1 s'}$$ 
  different $y \in \fJ\cap \Sigma^r$ such that 
  \[ \cC(\cT(y), z) \approx \cC. \]
  Therefore, we can choose a collection $\mathfrak{CT}$ of highly concentrated curve neighborhoods in $\Theta'(z)$ such that  
  \[ |\mathfrak{CT}| \ge e^{ 2 d_i  s'-16C_1 \epsilon_1 s'}  e^{-2s' - 2\ell - 60C_1 \epsilon_1 s'} = e^{(2d_i - 2)  s' - 2\ell - 76 C_1 \epsilon_1 s'},\]
  and any two elements from $\mathfrak{CT}$ are not comparable. Note that every highly concentrated curve neighborhood covers at least $$e^{2s' - 9C_1 \epsilon_1 s'}$$ 
  different full $\Omega'(z')$'s. Thus, counting multiplicity, highly concentrated curve neighborhoods from $\mathfrak{CT}$ cover at least $$e^{2d_i s' - 2 \ell -85 C_1 \epsilon_1 s'}$$ 
  full $\Omega'(z')$'s. 
  \par Our goal is to show 
  \begin{itemize}
      \item[(GOAL)] The curve neighborhoods from $\mathfrak{CT}$ cover at least $e^{d_i s' + 20 C_1 \epsilon_1 s'}$
  different full $\Omega'(z')$. 
  \end{itemize}

  \par If every full $\Omega'(z')$ is covered by at most 
  \[ e^{ d_i s' - 2 \ell-105C_1 \epsilon_1 s'} \]
  different highly concentrated curve neighborhoods, then we are done. Now let us assume:
  \begin{itemize}
      \item there exists a full $\Omega'(z_0)$ which is covered by 
  at least 
  $  e^{ d_i s' - 2 \ell-105C_1 \epsilon_1 s'}$
  different highly concentrated curve neighborhoods, and let 
  $(X_0, Y_0)=\fD_{z}(z_0)$.
  \end{itemize}
  Let $\mathfrak{LL}\subset \mathfrak{CT}$ denote the set of highly concentrated curve neighborhoods passing through $(X_0, Y_0)$.
  \par Now let us consider the following process:
  \par Step $0$: We start with $\mathfrak{L}_0 = \emptyset$ and $\mathfrak{P}_0 = \emptyset$.
  \par Step $j+1$: Suppose $\mathfrak{L}_j$ and $\mathfrak{P}_j$ have been constructed in Step $j$. 
  We pick a highly concentrated curve neighborhood $\cC \in \mathfrak{LL}$ such that 
  the number of full $\Omega'(z')$ contained in $\cC$ but not contained in $\mathfrak{P}_j$ is 
  $\ge \frac12 e^{2s' - 9C_1\epsilon_1 s'}$. Then we define 
  \[ \mathfrak{L}_{j+1} = \mathfrak{L}_{j} \cup \{\cC\}, \text{ and } \mathfrak{P}_{j+1} = \mathfrak{P}_j \cup \cC. \] 
  If we can not pick such a curve neighborhood, the process stops. 
  \par When the process stops, say at step $n$, 
  let us take any $\cC \in \mathfrak{LL} \setminus \mathfrak{L}_n$. 
  Then by how the $\mathfrak L_j$ and $\mathfrak P_j$ were constructed, we have that at least 
  $$\tfrac12 e^{2s' - 9 C_1 \epsilon_1 s'}$$
  full $\Omega'(z')$ in $\cC$ are covered by curve neighborhoods from $\mathfrak{L}_n$. Let the equation of $\cC$ be 
  \[ (X - X_0 )(Y - Y_0) + v_1^{-1} (X - X_0) + w_1^{-1} (Y - Y_0) = 0. \]
  Using dyadic pigeonholing, we can assume that there exists $m_{\cC} >0$, such that at least 
  $$e^{2s' - 9.1\, C_1 \epsilon_1 s'}$$ 
  full $\Omega'(z')$ in $\cC$ are covered by curve neighborhoods from 
  $\mathfrak{L}_n$ whose equation 
  \[ (X - X_0) (Y - Y_0) + {v'_1}^{-1} (X - X_0) + {w'_1}^{-1} (Y - Y_0) = 0, \]
  satisfies that 
  \begin{equation}\label{eq:Lnm}
      |w_1 - w'_1| \in [e^{s' - m_{\cC}}, e^{s' - m_{\cC}+1}].
  \end{equation} 
  We
  take $m_{\cC}$ to be the minimal natural number satisfying this condition. Let us denote the collection of the curve neighborhoods satisfying \eqref{eq:Lnm} by $\mathfrak{L}_n(\cC, m_{\cC})$. We also remove $\cC'$ from $\mathfrak{L}_n(\cC, m_{\cC})$ if the intersection $\cC' \cap \cC$ is contained in the $e^{-s'-10C_1 \epsilon_1 s'}$-neighborhood of $(X_0, Y_0)$.
  By Lemma \ref{lm:determine-curve-nbhd}, we have that the curve neighborhoods from $\mathfrak{L}_n(\cC, m_{\cC})$ are contained 
  in the $e^{-s' - m_{\cC} - 2\ell}$-neighborhood of $\cC$. 
  Let us call this curve neighborhood the 
  \emph{$(\ell, m_{\cC})$-neighborhood of $\cC$}. By the last statement of Lemma \ref{lm:counting-curves-nbhd}, we have that each curve neighborhood from $\mathfrak{L}_n(\cC, m_{\cC})$ can cover at most 
  $$e^{m_{\cC} + 2 \ell + 11C_1 \epsilon_1 s'}$$ 
  different full $\Omega'(z')$ contained in $\cC$. 
  Therefore, we have that
  $$ | \mathfrak{L}_n(\cC, m_{\cC})| \ge e^{2s' - m_{\cC} - 2 \ell - 21 C_1 \epsilon_1 s'}.$$ 
   By the construction of $\mathfrak L_n$, every $ \cC' \in \mathfrak{L}_n(\cC, m_{\cC})$ covers at least 
   \[ e^{2s' - 10 C_1 \epsilon_1 s'} \]
   different full $\Omega'(z')$ which are not covered by any other $\cC'' \in \mathfrak{L}_n(\cC, m_{\cC})$.
   Thus, curve neighborhoods from $\mathfrak{L}_n(\cC, m_{\cC})$ cover at least 
  \begin{equation}\label{eq:curve neighborhoods cover}
      e^{4s' - m_{\cC} - 2 \ell - 31 C_1 \epsilon_1 s'}
  \end{equation}
  different full $\Omega'(z')$ in the $(\ell, m_{\cC})$-neighborhood of $\cC$. 
  We say that a $(\ell, m')$-neighborhood is \emph{$\eta$-almost filled} if it covers at least 
  $\eta e^{4s' - m' - 2 \ell}$
  different full $\Omega'(z')$. 
  By repeating the above argument for every curve neighborhood $\cC \in \mathfrak{LL} \setminus \fL_n$, we have that the $(\ell, m_{\cC})$-neighborhood of every curve neighborhood $\cC \in \mathfrak{LL}\setminus \mathfrak{L}_n$ is $e^{-31C_1 \epsilon_1 s'}$-almost filled.
  
  Again by dyadic pigeonholing, we can find a $m$ such that for at least 
  $\beta$-proportion of the curve neighborhoods $\cC \in \mathfrak{LL}$ such that $m_{\cC} = m$. Let $\tilde \fN$ denote the collection of the $(\ell, m)$-neighborhoods obtained as above.
  \par Given a $(\ell, m)$-neighborhood $\tilde \cC$, let us denote $L(\tilde \cC)$ the 
  $$e^{-s' - m - 2\ell + C_1 \epsilon_1 s'}$$ 
  neighborhood of $\tilde \cC$ and call it the \emph{enlarged $(\ell, m)$-neighborhood} of $\tilde \cC$. 
  Note that if two $(\ell, m)$-neighborhoods $\tilde \cC_1, \tilde \cC_2$ both contain 
  a curve neighborhood $\cC$ from $\fL_n$, then we have
  \[ \tilde \cC_1 \subset L(\tilde \cC_2). \]
  Let us select a subset $\mathfrak{N} \subset \tilde \fN$ satisfying the following:
  \begin{itemize}
      \item For any $\tilde \cC_1, \tilde \cC_2 \subset \fN$,  \[ \tilde \cC_1 \not\subset L(\tilde \cC_2). \]
      \item For any $\tilde \cC \in \tilde \fN \setminus \fN$, there exists $\tilde \cC' \in \fN$ such that 
       \[ \tilde \cC \subset L(\tilde \cC'). \]
  \end{itemize}
  Let us consider the curve neighborhoods from $\fL_n$ which are contained in some $(\ell,m)$-neighborhood from $\fN$. We have that those curve neighborhoods cover at least 
  \[ |\mathfrak{N}| e^{4s' - m - 2 \ell - 31 C_1 \epsilon_1 s'} \]
different full $\Omega(z')$. If 
\[ |\mathfrak{N}| \ge e^{d_i s' - 4 s' + m + 2 \ell + 51 C_1 \epsilon_1 s'}, \]
then we are done as this establishes the goal above. Now suppose that 
\begin{equation} 
\label{eq:fN-upper}
|\mathfrak{N}| < e^{d_i s' - 4 s' + m + 2 \ell + 51 C_1 \epsilon_1 s'}. \end{equation}
On the other hand, by the definition of $\fN$, we have that at least $\beta$-proportional curve neighborhoods from $\mathfrak{LL}$ are contained in some enlarged $(\ell, m)$-neighborhood $L(\tilde \cC)$ where $\tilde\cC \in \fN$. By Lemma \ref{lm:counting-curves-nbhd}, every enlarged $(\ell, m)$-neighborhood contains at most 
\[ e^{4s' - 2m - 4\ell + 102 C_1 \epsilon_1 s'}\]
different curve neighborhoods from $\mathfrak{LL}$. We have 
\[ |\mathfrak{N}| e^{4s' - 2m - 4\ell + 102 C_1 \epsilon_1 s' } \ge e^{ d_i s' - 2 \ell - 105C_1 \epsilon_1 s'},\]
which implies that 
\[ |\mathfrak{N}| \ge e^{d_i s' - 4 s' + 2 m + 2 \ell - 207C_1 \epsilon_1 s'}. \]
By \eqref{eq:fN-upper}, we get
\[ m \le 260 C_1 \epsilon_1 s'. \]
Then by \eqref{eq:curve neighborhoods cover}, every $(\ell, m)$-neighborhood contains at least 
\[e^{4s' - 2 \ell - 300 C_1 \epsilon_1 s'}\]
different full $\Omega'(z')$. If $\ell < 10^{10} C_1 \epsilon_1 s' $, then 
it covers at least 
\[ e^{4s' - 3 \times 10^{10} C_1 \epsilon_1 s'} \ge e^{d_i s' + 20 C_1 \epsilon_1 s'} \]
different full $\Omega'(z')$ and we are done.

It remains to deal with the case of $\ell \ge 10^{10} C_1 \epsilon_1 s'$. This case is similar to the conclusion of the $\ell>s'$ case, and to deal with this case we shall need to use Proposition~\ref{prop:sl2xr2-closing-lemma} (which roughly says that our Diophantine assumptions guarantee that $\tilde\mu$ does not concentrate along orbits of $\SL(2,R)\ltimes \R^2$).

According to our assumption, the $(\ell, m)$-neighborhoods from $\fN$ cover at least 
\[ e^{d_i s' - 4 s'  + 2 \ell - 207C_1 \epsilon_1 s'} e^{4s' - 2 \ell -300 C_1 \epsilon_1 s' } = e^{d_i s' - 507 C_1 \epsilon_1 s'}\]
different full $\Omega'(z')$. Similarly to the case where $\ell \ge s' $, we have that 
all $\Omega'(z')$ covered by these $(\ell, m)$-neighborhoods are contained in
\[ \cN^\ell_1 = \cN^\ell (X_0, Y_0) = (X_0, Y_0) + [\beta e^{-s' -\ell}]\times [\beta e^{-s'}]. \]
\par As above, define 
  \[ \HS(z, r; \cN_1^\ell) := \left\{ y \in \fJ\cap \Sigma^r: \cC(\cT(y), z) \subset \cN_1^{\ell} \text{ and is highly concentrated } \right\}, \]
  and 
  \[ \SS(z, r; \cN_1^\ell) := \bigcup_{y \in \HS(z,  r; \cN_1^\ell )} \Omega'(y). \]
  We claim that 
  \[ \tilde \mu\left(\SS(z,  r; \cN_1^\ell)\right) \le e^{-2s'-800 C_1 \epsilon_1 s'}.  \]
In fact, by Lemma \ref{lm:curve-in-N}, there exists $z' \in \Theta'(z)$ so that the following holds: For any $y \in \fJ\cap \Sigma^r$ such that 
$\cC(\cT(y), z) \subset \cN_1^\ell$ and is highly concentrated, we have
  \[ \Omega'(y) \subset a(2s')Q(\fr_1, \ell-1, \beta ) B^{A_0 H}(\beta) \tilde z,  \]
  where $\tilde z = a(-2s')z'$.
  Therefore, 
  \[ \SS(z,r; \cN_1^\ell) \subset a(2s')Q(\fr_1, \ell-1, \beta ) B^{A_0 H}(\beta) \tilde z \cap \Sigma^r. \]
  By our discussion at the beginning of \S \ref{sec-reduction}, Proposition \ref{prop:sl2xr2-closing-lemma} holds for $\mu_{\cF_i}$. Thus, we have 
  \begin{align*} 
  \tilde \mu \left(a(2s')Q(\fr_1, \ell-1, \beta ) B^{A_0 H}(\beta) \tilde z\right) &\le \beta^{-15} \mu_{\cF_i} \left(Q(\fr_1, \ell-1 , \beta) B^{A_0 H}(\beta) \tilde z\right) \\ 
  &\le \beta^{-15}e^{-d_0 (\ell-1) /2} < e^{-800 C_1 \epsilon_1 s'}.\end{align*}
  Note that 
  \[  a(2s')Q(\fr_1, \ell-1, \beta ) B^{A_0 H}(\beta) \tilde z \cap \Sigma^r \]
  is a cross-section of $a(2s')Q(\fr_1, \ell-1, \beta ) B^{A_0 H}(\beta) \tilde z$ along the $U$-direction. We have that 
  \begin{align*}
      \tilde \mu \left(\SS(z, r; \cN_1^\ell)\right) &\le \tilde \mu\left(a(2s')Q(\fr_1, \ell-1, \beta ) B^{A_0 H}(\beta) \tilde z \cap \Sigma^r\right) \\ 
                                   &\le e^{-2s'} \tilde \mu \left(a(2s')Q(\fr_1, \ell-1, \beta ) B^{A_0 H}(\beta) \tilde z\right) \le e^{-2s'-800 C_1 \epsilon_1 s'}.
  \end{align*}
  This proves the claim. Similarly to the case $ \ell \ge s' $, the claim implies that~$\clubsuit$ holds with $\HS(z,r)$ replaced by $\HS(z,r) \setminus \HS(z, r;\cN_1^\ell)$. By repeating the above argument with $\HS(z,r)$ replaced by $\HS(z,r) \setminus \HS(z, r;\cN_1^\ell)$, we get another 
$$\cN^\ell_2 = \cN^\ell(X_2, Y_2) = (X_2, Y_2) + [\beta e^{-s' -\ell}]\times [\beta e^{-s'}]$$
disjoint with $\cN_1^\ell$ which also contains at least 
\[ e^{d_i s' - 507 C_1 \epsilon_1 s'}\]
different full $\Omega'(z')$. By our claim, the argument can be repeated for at least $e^{600 C_1 \epsilon_1 s'}$ times 
which gives at least 
\[ e^{d_i s' - 507 C_1 \epsilon_1 s'} e^{600 C_1 \epsilon_1 s'} \ge e^{d_i s' + 20 C_1 \epsilon_1 s'} \]
different full $\Omega'(z')$. 
\par This completes the proof. 
}
\end{proof}

\section{Proof of the main theorem}
\label{sec-main-proof}
\par In order to apply Proposition \ref{prop:high-dimension-to-equidistribution-a}, we first prove the following statement:
\begin{prop}
  \label{prop:high-dim-decomposition}
  \par Let $s \le \hat s < t$, $s' =(t-\hat s)/2$, and $\hat \mu$ be a probability measure on $X_{\beta^{1/4}}$ defined by the normalized Lebesgue measure on the $U$-orbit 
  $a(\hat s) u([1])x_0$ removed an exponentially small proportion. 
  Suppose $\hat \mu$ is $(4-\theta, s')$-good. Then $a(2s')_\ast \hat \mu$ can be written as 
  \begin{equation}
    \label{eq:decompose-hat-mu}
    a(2s')_\ast \hat \mu = \nu_{\S} \mu_{\S} + \sum_{k} \nu_k a(2s')_\ast \mu_k,
  \end{equation}
  where $\mu_{\S}$ is a probability measure on $X$, $\nu_{\S} = O(\beta)$, and for each $k$, $\mu_k$ is a 
  probability measure on 
  $$\exp\left(B_{\vv r^+}(\beta)\right)u\left([\beta]\right)x_k$$
  for some $x_k \in X$. Moreover, for any $f \in C_c^{\infty}(X)$,
  \[ \int f \dd \mu_k = \int_{[\beta]^2} \int_{[1]} f(\exp(v_1 \vv v_1 + w_1\vv w_1 )u(\beta r)) \dd r \dd \rho_k (v_1, w_1),\]
  where $\rho_k$ is a probability measure on $[\beta]^2$ with dimension larger than $2-2\theta$ at scale $s'$ (see Proposition 
  \ref{prop:high-dimension-to-equidistribution-a} for the definition).
\end{prop}
\begin{proof}
  \par Let us define
  \begin{equation} 
    \label{eq:define-Upsilons'}
    \Upsilon_{s'} := B^{A_0 A U^\ast}(\beta)  \exp\left(B_{\vv r^-} (\beta e^{-s'})\right)\exp\left(B_{\vv r^+} (\beta)\right) B^{U}(\beta),
  \end{equation}
  and decompose $X_{\beta^{1/4}}$ as
  \[ X_{\beta^{1/4}} = \bigcup_{k=1}^{\beta^{-100} e^{2s'}}  \Upsilon_{s'} x_k.   \]
  We say $x_k$ is \emph{light} if 
  \[ \hat \mu\left( \Upsilon_{s'} x_k\right) \le \beta^{-2} e^{-2s' -\theta s'}. \]
  Otherwise we say $x_k$ is \emph{heavy}.
  For each heavy $x_k$, let us denote
  \[ \bar{\nu}_k = \hat \mu \left( \Upsilon_{s'} x_k \right), \]
  and 
  \[ \bar{\mu}_k = \bar{\nu}^{-1}_k \hat \mu|_{ \Upsilon_{s'} x_k}.\]
  Let us define 
  \[ \Upsilon_{\mathrm{light}} :=  \bigcup_{x_k \text{ light }}  \Upsilon_{s'} x_k.\]
  Then it is easy to see that 
  \[  \hat \mu \left( \Upsilon_{\mathrm{light}} \right) \le \beta^{10}.\]
  Therefore, we have 
  \begin{equation} 
    \label{eq:first-decompose-hat-mu}
    \hat \mu = \sum_{x_k \text{ heavy }} \bar{\nu}_k \bar{\mu}_k +O(\beta^{10}).  \end{equation}
  We claim that for every heavy $x_k$, $\bar \mu_k$ is \emph{nice}, namely, for $y \in \supp \bar{\mu}_k$, 
  \[ \bar{\mu}_k (Q_{s'}(\beta) B^{A_0 H}(\beta)y) \le \beta^2 e^{-2s' + 2\theta s'}. \]
  In fact, since $\hat \mu$ is $(4-\theta, s')$-good, we have that 
  \[ \hat \mu(Q_{s'}(\beta) B^{A_0 H}(\beta)y) \le e^{-s'(4-\theta)}. \]
  Therefore, 
  \begin{align*}
    \bar{\mu}_k (Q_{s'}(\beta) B^{A_0 H}(\beta)y) &= {\hat \mu}^{-1}(\Upsilon_{s'} x_k)  \hat \mu(Q_{s'}(\beta) B^{A_0 H}(\beta)y) \\ 
         &\le \beta^2 e^{2s' + \theta s'} e^{-s'(4-\theta)} = \beta^2 e^{-2s' + 2\theta s'}. 
  \end{align*}
  This proves the claim.
  \par Now for every heavy $x_k$, let us define $\mu_k$ as follows: For each 
  $U$-orbit $\mathcal{U}$ of length $\beta$ contained in $\supp \bar{\mu}_k$, pick $y \in \mathcal U$ such that 
  \[ y \in  B^{A_0 A U^\ast}(\beta)  \exp\left(B_{\vv r^-} (\beta e^{-s'})\right) \exp\left(B_{\vv r^+} (\beta)\right) x_k. \] 
  Then we can write
  \[ y = a_0 a u^{\ast} \exp(\vv w^-) \exp(\vv w^+) x_k, \]
  where $a_0 a u^{\ast} \in B^{A_0 A U^\ast}(\beta)$, $\vv w^- \in B_{\vv r^-} (\beta e^{-s'})$, and $\vv w^+ \in B_{\vv r^+} (\beta)$.
Let us define 
\[ \bar{y} :=   \exp(\vv w^+) x_k.\]
Since $\|\vv w^-\| \le \beta e^{-s'}$, we have that 
\begin{equation} 
  \label{eq:bary-y-Qs}
  \bar{y} \in Q_{s'}(\beta) B^{A_0 H}(\beta)y. \end{equation}
Moreover, we may apply \eqref{eq:fvy}, \eqref{eq:fwy}, and \eqref{eq:fy} to conclude that for 
every $r' \in [\beta]$, there exists $f(r')$ such that $|r' - f(r')| \le \beta|r'|$, and
\[ a(2s')u\left(f(r')\right) \bar{y} \in B^G(\beta) u(r') y.  \]
Therefore, for any function $f \in C^{\infty}_c(X)$, 
\begin{equation}
  \label{eq:compare-u-orbit} 
  \left| \int_{[1]} f(a(2s')u(\beta r) y) \dd r -  \int_{[1]} f(a(2s')u(\beta r) \bar y) \dd r\right| \ll \beta \|f\|_{C^1}.
\end{equation}
Let us denote the union of $\bar y$'s constructed as above by $\mathcal T$ and define $\mu_k$ as the normalized Lebesgue measure on the union 
$$ \bigcup_{\bar y \in \mathcal T}u([\beta])\bar y.$$
By \eqref{eq:compare-u-orbit}, we have that $a(2s')_\ast\mu_k$ and $a(2s')_{\ast}\bar \mu_k$ are $O(\beta)$-close, namely, for any $f \in C_c^{\infty} (X)$,
\[ \left| \int f \dd a(2s')_\ast\mu_k - \int f \dd a(2s')_\ast \bar{\mu}_k \right| \ll \beta \|f\|_{C^1}.\]
Since $\mu_k$ is supported on $\exp(B_{\vv r^+} (\beta)) u([\beta])x_k$, we have that there 
exists a probabality measure $\rho_k$ on $[\beta]^2$ such that for $f \in C^\infty_c(X)$,
\[ \int f \dd \mu_k = \int_{[\beta]^2} \int_{[1]} f(\exp(v_1 \vv v_1 + w_1 \vv w_1) u(\beta r)) \dd \rho_k(v_1, w_1) \dd r. \]
We claim that $\rho_k$ has dimension larger than $2-2\theta$ at scale $s'$. In fact, 
by \eqref{eq:bary-y-Qs}, we have that for $y$ and $\bar y$ constructed as above,
$$u([\beta])\bar y \in Q_{s'}(\beta) B^{A_0 H}(\beta)y.$$
Therefore, given $\bar \mu_k$ is nice, we have $\mu_k$ is also nice. Therefore, for any square $S$ of size $\beta e^{-s'}$, we have
\[\rho_k(S) = \mu_k \left( Q_{s'}(\beta) B^{A_0 H}(\beta)y_S \right) \le \beta^2 e^{-2s' + 2\theta s'}.\]
Here $y_S = \exp(v_S \vv v_1+ w_S \vv w_1)$ where $(v_S, w_S)$ is the center of $S$. This proves the claim.
\par Combining the claim above with \eqref{eq:first-decompose-hat-mu} and \eqref{eq:compare-u-orbit}, we conclude \eqref{eq:decompose-hat-mu}.


\end{proof}
\par We are now equipped to prove Theorem \ref{thm:main-thm}. 
\begin{proof}[Proof of Theorem \ref{thm:main-thm}]
\par By Proposition \ref{prop:initial-bound-dimension}, we have that $\mu_{\cF_0}$ is $(d_0, s_0)$-good. By Proposition \ref{prop:dimension-improvement-random}, \ref{prop:dimension-improvement-strange-part}, and \ref{prop:dimension-improvement-structured-component} we get Proposition \ref{prop:improve-dimension}. By Proposition \ref{prop:reduction}, it implies that $\mu_{\cF_{i}}$ is $(d_i, s_i)$-good, where $d_i = d_0 + i \epsilon_2$ and $s_i = 2^{-i}s_0$(we remove an exponentially small proportion at each step). 
 We can proceed with this inductive construction until $d_i \ge 4 - \theta$. By Proposition \ref{prop:high-dim-decomposition}, we have that 
 \[ a(2s_i)_\ast \mu_{\cF_i} = \sum_{k} \nu_k a(2s_i)_\ast \mu_k + O(\beta).\]
Note that by Proposition \ref{prop:high-dim-decomposition}, every $\mu_k$ is defined by 
a measure $\rho_k$ on $[\beta]^2$ with dimension larger than $2-2\theta$ at scale $s'$.
Then Proposition \ref{prop:high-dimension-to-equidistribution-a}, we have 
 that $a(2 s_i)_\ast \mu_{k}$ is effectively equidistributed. This implies that $a(2s_i)_\ast \mu_{\cF_i}$ is effectively equidistributed. 
 Noting that $\mu_{\cF_i}$ differs from the normalized Lebesgue measure on 
 $$a(t - 2 s_i) u\left([1]\right)x_0$$ 
 only by removing an exponentially small proportion, this completes the proof.
\end{proof}

\bibliography{reference}{}
\bibliographystyle{alpha}

\end{document}